\documentclass[11pt]{article}

\usepackage[a4paper,vmargin={3.5cm,3.5cm},hmargin={2.5cm,2.5cm}]{geometry}
\usepackage[font=sf, labelfont={sf,bf}, margin=1cm]{caption}
\usepackage{dsfont,amsmath,amsfonts,amsthm,amssymb}

\usepackage[toc,page]{appendix} 
\usepackage{graphicx}
\usepackage{graphics}
\usepackage{stmaryrd}
\usepackage{verbatim}
\usepackage{color}
\usepackage{bbm}
\usepackage[T1]{fontenc}
\usepackage[cyr]{aeguill}

\title{Local convergence of large critical multi-type Galton-Watson trees and applications to random maps}
\author{\text{Robin Stephenson}\thanks{Institut f\"ur Mathekmatik, Universit\"at Z\"urich, Winterthurerstrasse 190, CH-8057 Z\"urich, Switzerland. \newline E-mail: robin.stephenson@normalesup.org}   }
\begin{document}

\maketitle

\newtheorem{theo}{Theorem}[section]
\newtheorem{lemma}{Lemma}[section]
\newtheorem{prop}{Proposition}[section]
\newtheorem{cor}{Corollary}[section]
\newtheorem{defi}{Definition}[section]
\newtheorem{rem}{Remark}
\numberwithin{equation}{section}
\newcommand{\e}{{\mathrm e}}
\newcommand{\rep}{{\mathrm{rep}}}
\newcommand{\R}{{\mathbb{R}}}
\newcommand{\C}{{\mathbb{C}}}
\newcommand{\T}{\mathbf{T}}
\newcommand{\E}{\mathbf{E}}
\renewcommand{\L}{\mathbf{L}}
\newcommand{\TT}{\mathbb{T}}
\newcommand{\s}{\mathcal{S}^{\downarrow}}
\newcommand{\p}{\mathcal{P}}
\newcommand{\N}{\mathbb{N}}
\newcommand{\D}{\mathcal{D}}
\newcommand{\W}{\mathcal{W}}
\newcommand{\pr}{\mathbb{P}}
\newcommand{\z}{\mathbf{\zeta}}
\newcommand{\m}{\mathbf{\mu}}
\newcommand{\M}{\mathcal{M}}
\newcommand{\Z}{\mathbb{Z}}
\newcommand{\dia}{\diamond}
\newcommand{\bul}{\bullet}
\renewcommand{\geq}{\geqslant}
\renewcommand{\leq}{\leqslant}
\renewcommand{\epsilon}{\varepsilon}

\begin{quote}{\small We show that large critical multi-type Galton-Watson trees, when conditioned to be large, converge locally in distribution to an infinite tree which is analogous to Kesten's infinite monotype Galton-Watson tree. This is proven when we condition on the number of vertices of one fixed type, and with an extra technical assumption if we count at least two types. We then apply these results to study local limits of random planar maps, showing that large critical Boltzmann-distributed random maps converge in distribution to an infinite map.}\end{quote}

\section{Introduction}
A planar map is a proper embedding of a finite connected planar graph in the sphere, taken up to orientation-preserving homeomorphisms. These objects were first studied from a combinatorial point of view in the works of Tutte in the 1960s (see notably \cite{Tutteplanarmaps}), and have since been of use in different domains of mathematics, such as algebraic geometry (see for example \cite{LZ}) and theoretical physics (as in \cite{ADJ}). There has been great progress in their probabilistic study ever since the work of Schaeffer \cite{Schaeffer}, which has amongst other things led to finding the scaling limit of many large random maps (we mention \cite{M13} and \cite{LG13}).

Our subject of interest here is the local convergence of large random maps, which means that we are not interested in scaling limits but in the combinatorial structure of a map around a chosen root. Such problems were first studied by Angel and Schramm (\cite{AS}) and Krikun (\cite{Krikun}), who showed that the distributions of uniform triangulations and quadrangulations with $n$ vertices converge weakly as $n$ goes to infinity. Each limit is the distribution of an infinite random map, respectively known as  the uniform infinite planar triangulation (UIPT) and the uniform infinite planar quadrangulation (UIPQ). Of particular interest to us is the paper \cite{CMM} where the convergence to the UIPQ is shown by a method involving the well-known Cori-Vauquelin-Schaeffer bijection (\cite{Schaeffer}).

We will generalize this to a large family of random maps called the class of Boltzmann-distributed random maps. Let $\mathbf{q}=(q_n)_{n\in\N}$ be a sequence of nonnegative numbers. We assign to every finite planar map a weight which is equal to the product of the weights of its faces, the weight of a face being $q_d$ where $d$ is the number of edges adjacent to said face, counted with multiplicity. If the sum of all the weights of all the maps is finite, then one can normalize this into a probability distribution. 

The use of the so-called Bouttier-Di Francesco-Guitter bijection (see \cite{BDFG}, or Section \ref{sec:BDFG}) allows us to obtain the convergence to infinite maps for a fairly large class of weight sequences $\mathbf{q}$. For $\mathbf{q}$ in this class, let $(M_n,E_n)$ be a $\mathbf{q}$-Boltzmann rooted map conditioned to have $n$ vertices (or $n$ edges or $n$ faces) our main Theorem \ref{chap4:thcvcartes} states that this sequence converges in distribution to a random map $(M_{\infty},E_{\infty})$, which we call the \emph{infinite $\mathbf{q}$-Boltzmann map}. Due to combinatorial reasons, we have to restrict $n$ to a sublattice of $\Z_+$.

The classes of weight sequences for which this is true are the class of \emph{critical} sequences when we condition by the number of vertices, and \emph{regular critical} when we condition by the number of edges or faces (both are defined in Section \ref{chap4:defboltzmann}). These classes contain all sequences with finite support (up to multiplicative constants). Taking $q_n=\mathbbm{1}_{\{n=p\}}$ with $p\geq 3$ gives us the case of the uniform $p$-angulation, making our results an extension of what was known about the UIPT and UIPQ.

Local limits of Boltzmann random maps have notably been studied recently in \cite{BS}. A key difference with our work here is the fact that the maps are supposed to be bipartite in \cite{BS} (the weight sequence $\mathbf{q}$ is supported on the even integers). In this context, conditioning maps by their number of edges ends up being more natural than in our work, and it is sufficient to only assume criticality and not regular criticality.

\medskip

The proof of convergence to an infinite map hinges on a similar result for critical multi-type Galton-Watson trees and forests, Theorem \ref{cvforests}. This theorem itself generalizes the well-known fact that critical monotype Galton-Watson trees, when conditioned to be large, converge to an infinite tree formed by a unique infinite spine to which many finite trees are grafted. This infinite tree was first indirectly mentioned in \cite{Kesten1986}, Lemma 1.14, and many details about the convergence are given in \cite{AD14} and \cite{Janson}. One of its properties is that one can obtain its distribution from the distribution of the finite tree by a size-biasing process, as explained in \cite{LyonsPeres}.

In the multi-type case, as with maps, we have two different kinds of conditionings. If the tree is simply critical, then we must condition it by the number of vertices of one type early, while if it is regular critical (criticality and regular criticality being defined in Section \ref{sec:crittree}), we can condition it by its ``size", for a general notion of size where we count all the vertices, giving some integer weight to each vertex depending on its type. The distribution of the infinite limiting tree can once again be described by a biasing process from the original tree, as explained in Proposition \ref{definftree}, something which was anticipated in \cite{KLPP}.


A fairly important issue in Theorem \ref{cvforests} is the problem of periodicity: as with maps, a multi-type Galton-Watson tree cannot have any number of vertices. To be precise, the size of the tree is always in $\alpha+d\mathbb{Z}_+$, where $d$ and $\alpha$ are integers which depend on the offspring distribution and the type of the root (for $\alpha$ only). Particular care must thus be taken when counting the vertices of forests or specific subtrees.

We end this introduction by mentioning two papers which deal with similar limits and have appeared since the start of work. In \cite{Pen} is studied the limit of the multi-type Galton-Watson process associated to the tree, while the authors of \cite{ADG} are also interested in the local limit of the tree. The difference is that they are focused on the aperiodic case, and that they condition on the vector of population sizes of each types, and not a linear function of it. It is however shown in \cite{ADG} that, when we condition on only one type, our result can be deduced from theirs.
\medskip

The paper is split into two halves: we start by working on trees, and later on apply the results to maps. To be precise, after recalling facts about multi-type Galton-Watson trees in Section 2, we state in Section 3 the convergence of large critical multi-type Galton-Watson forests to their infinite counterpart, the proof of which is done in Section 4. Section 5 then states the basic background on planar maps, and we state and prove Theorem \ref{chap4:thcvcartes}, our main theorem of convergence of maps, in Section 6. The final section is then dedicated to an application, namely showing that the infinite Boltzmann map is almost surely a recurrent graph.

\section{Background on multi-type Galton-Watson trees}
\subsection{Basic definitions}\label{chap4:basicdef}
\smallskip
\noindent\textbf{Multi-type plane trees.} We recall the standard formalism for family trees, first introduced by Neveu in \cite{Neveu}. We denote by $\N$ the set of strictly positive integers, and $\Z_+$ the set of nonnegative integers. Let
	\[\mathcal{U}=\bigcup_{k=0}^{\infty} \N^{k}
\]
be the set of finite words on $\N$, also known as the Ulam-Harris tree. Elements of $\mathcal{U}$ are written as sequences $u=u^1u^2\ldots u^k$, and we call $|u|=k$ the height of $u$. We also let $u^-=u^1u^2\ldots u^{k-1}$ be the father of $u$ when $k>0$. In the case of the empty word $\emptyset$, we let $|\emptyset|=0$ and we do not give it a father. If $u=u^1\ldots u^k$ and $v=v^1\ldots v^l$ are two words, we define their concatenation $uv=u^1\ldots u^k v^1\ldots v^l.$

A plane tree is a subset $\mathbf{t}$ of $\mathcal{U}$ which satisfies the following conditions:
\begin{itemize}
\item $\emptyset \in \mathbf{t}$,
\item $u \in \mathbf{t}\setminus\{\emptyset\} \Rightarrow u^-\in \mathbf{t}$,
\item $\forall u\in\mathbf{t}, \exists k_u(\mathbf{t})\in\Z_+, \forall j\in\N, uj\in\mathbf{t} \Leftrightarrow j \leq k_u(\mathbf{t})$.
\end{itemize}
Given a tree $\mathbf{t}$ and an integer $n\in\Z_+$, we let $\mathbf{t}_n=\{u\in\mathbf{t},|u|= n\}$ and $\mathbf{t}_{\leq n}=\{u\in\mathbf{t},|u|\leq n\}$. We call \emph{height} of $\mathbf{t}$ the supremum $ht(t)$ of the heights of all its elements. If $u\in\mathbf{t}$, we let $\mathbf{t}_u=\{v\in\mathcal{U},uv\in\mathbf{t}\}$ be the subtree of $\mathbf{t}$ rooted at $u$.

Note that the finiteness of $k_u(\mathbf{t})$ for any vertex $u$ implies that all the trees which we consider are locally finite: a vertex can only have a finite number of neighbours. We do however allow infinite trees.

Let now $K\in\N$ be an integer. We call $[K]=\{1,2,\ldots,K\}$ the set of types. A $K$-type tree is then a pair $(\mathbf{t},\mathbf{e})$ where $\mathbf{t}$ is a plane tree and $\mathbf{e}$ is a function: $\mathbf{t}\to [K]$, which gives a type $\mathbf{e}(u)$ to every vertex $u\in\mathbf{t}$. For a vertex $u\in\mathbf{t}$, we also let $\mathbf{w}_{\mathbf{t}}(u)=\big(\mathbf{e}(u1),\ldots,\mathbf{e}(uk_u(\mathbf{t}))\big)$ be the list of types of the ordered offspring of $u$. Note of course that the knowledge of $\mathbf{e}(\emptyset)$ and of all the $\mathbf{w}_{\mathbf{t}}(u)$, $u\in\mathbf{t}$ gives us the complete type function $\mathbf{e}$.

We let
	\[\W_K =\bigcup_{n=0}^{\infty} [K]^n.
\]
be the set of finite type-lists. Given such a list $\mathbf{w}\in \W_K$ and a type $i\in[K]$, we let $p_i(\mathbf{w})=\#\{j,w_j=i\}$ and $p(\mathbf{w})=(p_i(\mathbf{w}))_{i\in[K]}$. This defines a natural projection from $\W_K$ onto $(\Z_+)^{K}$. We also let $|\mathbf{w}|=\sum_i p_i(\mathbf{w})$ be the length of $\mathbf{w}$. Elements of $\W_K$ should be seen as orderings of types, such that the type $i$ appears $p_i(\mathbf{w})$ times in the order $\mathbf{w}$.

\bigskip

\noindent\textbf{Offspring distributions.} We call \emph{ordered offspring distribution} any sequence $\z=(\zeta^{(i)})_{i\in[K]}$ where, for all $i\in[K]$, $\zeta^{(i)}$ is a probability distribution on $\W_K$. Letting for all $i$ $\mu^{(i)}=p_*\zeta^{(i)}$ be the image measure of $\zeta^{(i)}$ on $(\Z_+)^K$ by $p$, we then call $\mu=(\mu^{(i)})_{i\in[K]}$ the associated \emph{unordered} offspring distribution.

We will always assume the condition
	\[\exists i\in[K], \mu^{(i)} \left( \Big\{\mathbf{z}\in(\Z_+)^k,\sum_{j=1}^K z_j \neq 1\Big\}\right) >0
\]
to avoid degenerate cases which lead to infinite linear trees.

\bigskip

\noindent\textbf{Uniform orderings.} Let us give details about a particular case of ordered offspring distribution. For $\mathbf{n}=(n_i)_{i\in[K]}\in(\Z_+)^K$, we call uniform ordering of $\mathbf{n}$ any uniformly distributed random variable on the set of words $\mathbf{w}\in\W_K$ satisfying $p(\mathbf{w})=\mathbf{n}$. Such a random variable can be obtained by taking the word $(1,1,\ldots,1,2,\ldots,2,3,\ldots,K,\ldots,K)$ (where each $i$ is repeated $n_i$ times) and applying a uniform permutation to it. Now let $\m=(\mu^{(i)})_{i\in[K]}$ be a family of distributions on $(\Z_+)^K$, we call \emph{uniform ordering of $\m$} the ordered offspring distribution $\z=(\zeta^{(i)})_{i\in[K]}$ where, for each $i$, $\zeta^{(i)}$ is the distribution of a uniform ordering of a random variable with distribution $\mu^{(i)}$.

\bigskip

\noindent\textbf{Galton-Watson distributions.} We can now define the distribution of a $K$-type Galton-Watson tree rooted at a vertex of type $i\in[K]$ and with ordered offspring distribution $\z$, which we call $\pr^{(i)}_{\z}$, by
\begin{equation}\label{defGW}
\pr^{(i)}_{\z}(\mathbf{t},\mathbf{e})=\mathbbm{1}_{\{\mathbf{e}(\emptyset)=i\}}\prod_{u\in\mathbf{t}}\zeta^{(\mathbf{e}(u))}(\mathbf{w}_{\mathbf{t}}(u))
\end{equation}
for any finite tree $(\mathbf{t},\mathbf{e})$.
This formula only defines a sub-probability measure in general, however in the cases which interest us (namely, critical offspring distributions, see the next section) we will indeed have a probability distribution. In practice we are not interested in this formula as much as in the \emph{branching property}, which also characterizes these distributions: the types of the children of the root of a tree $(\mathbf{T,E})$ with law $\pr^{(i)}_{\z}$ are determined by a random variable with law $\zeta^{(i)}$ and, conditionally on the offspring of the root being equal to a word $\mathbf{w}$, the subtrees rooted at points $j$ with $j\in [|\mathbf{w}|]$ are independent, each one with distribution $\pr^{(j)}_{\z}$.

\bigskip

\noindent\textbf{Criticality.}\label{sec:crittree}
Let $M=(m_{i,j})_{i,j\in[K]}$ be the $K\times K$ matrix defined by
	\[m_{i,j}=\sum_{\mathbf{z}\in(\Z_+)^K} z_j \mu^{(i)}(\mathbf{z}), \qquad \forall i,j\in[K].
\]
We assume that $M$ is \emph{irreducible}, which means that, for all $i$ and $j$ in $[K]$, there exists some power $p$ such that the $(i,j)$-th entry of $M^p$ is nonzero. In this case, we know by the Perron-Frobenius theorem that the spectral radius $\rho$ of $M$ is in fact an eigenvalue of $M$. We say that $\zeta$ (or $\mu$, or $M$) is \emph{subcritical} if $\rho<1$ and \emph{critical} if $\rho=1$, which both in particular imply that Equation (\ref{defGW}) does define a probability distribution and that Galton-Watson trees with ordered offspring distribution $\zeta$ are almost surely finite. {\bf We will always assume criticality in the rest of the paper}. The Perron-Frobenius theorem also tells us that, up to multiplicative constants, the left and right eigenvectors of $M$ for $\rho$ are unique. We call them $\mathbf{a}=(a_1,\ldots,a_K)$ and $\mathbf{b}=(b_1,\ldots,b_K)$ and normalize them such that $\sum_i a_i= \sum_i a_ib_i=1$, in which case their components are all strictly positive.

The fact that $\mathbf{b}$ is a right-eigenvector of $M$ translates as 
   \[b_i=\sum_{\mathbf{z}\in(\Z_+)^{K}} \mu^{(i)}(\mathbf{z})\mathbf{b}\cdot\mathbf{z},
\] where $\cdot$ is the usual dot product. One can deduce from this the existence of a martingale naturally associated with the Galton-Watson tree. Let $(\mathbf T,\mathbf E)$ have distribution $\pr^{(i)}_{\z}$ for some $i\in[K]$ and, for all $n\in\N$ and $j\in[K]$, let $Z^{(j)}_n$ be the number of vertices of $\mathbf{T}$ which have height $n$ and type $j$, and set $\mathbf{Z}_n=(Z^{(j)}_n)_{j\in[K]}$. Define then, for $n\in\Z_{+}$,
\begin{equation}\label{defmartingale}
X_n= \mathbf{b}\cdot \mathbf{Z}_n =\sum_{j=1}^K b_j Z^{(j)}_n.
\end{equation}
The process $(X_n)_{n\in\Z_+}$ is then a martingale.

Finally, we say that $\zeta$ (or $\mu$) is \emph{regular critical} if, in addition to being critical, $\zeta$ has small exponential moments in the following sense:
	\[\exists z>1,\forall i\in[K], \sum_{\mathbf{w}\in\W_K} \zeta^{(i)}(\mathbf{w}) z^{|\mathbf{w}|} <\infty
\]

\bigskip

\noindent\textbf{Spatial trees.}
Later on in this paper we will be looking at \emph{spatial} $K$-type trees, that is trees coupled with labels on their vertices. We define a $K$-type spatial tree to be a triple $(\mathbf{t},\mathbf{e},\mathbf{l})$ where $(\mathbf{t},\mathbf{e})$ is a $K$-type tree and $\mathbf{l}$ is any real-valued function on $\mathbf{t}$. Note that, given $\mathbf{t}$, $\mathbf{e}$ and $\mathbf{l}(\emptyset)$, the rest of $\mathbf{l}$ is completely determined by the differences $\mathbf{l}(u)-\mathbf{l}(u^-)$ for $u\in\mathbf{t}\setminus\{\emptyset\}$. This is why we let, for $u\in\mathbf{t}$, $\mathbf{y}_u=\Big(\mathbf{l}(u1)-\mathbf{l}(u),\mathbf{l}(u2)-\mathbf{l}(u),\ldots,\mathbf{l}\big(uk_u(\mathbf{t})\big)-\mathbf{l}(u)\Big)\in\R^{|\mathbf{w}_{\mathbf{t}}(u)|}$ be the list of ordered label displacements of the offspring of $u$.

Consider, for all types $i\in[K]$ and words $\mathbf{w}\in\W_K$, a probability distribution $\nu^{(i)}_{\mathbf{w}}$ on $\R^{|\mathbf{w}|}$, as well as a number $\varepsilon$. We let $\pr^{(i,\varepsilon)}_{\z,\mathbf{\nu}}$ be the distribution of a triple $(\mathbf{T,E,L})$ where $(\mathbf{T,E})$ is a $K$-type tree with distribution $\pr^{(i)}_{\z}$, the root $\emptyset$ has label $\varepsilon$ and the label displacements $\big(\mathbf{L}(u1)-\mathbf{L}(u), \mathbf{L}(u2)-\mathbf{L}(u),\ldots,\mathbf{L}(uk_u(\mathbf{T}))-\mathbf{L}(u)\big)$ (with $u\in \mathbf{T}$) are all independent, each one having distribution $\nu^{\big(\mathbf{E}(u)\big)}_{\mathbf{w}_{\mathbf{T}}(u)}$ conditionally on $\mathbf{E}(u)$ and $\mathbf{w}_\mathbf{T}(u)$.

\bigskip

\noindent\textbf{Forests.}
We will not only look at trees but also at multi-type (and, when needed, labelled) \emph{forests}, a forest being defined as an ordered finite collection of trees: elements of the form $(\mathbf{f,e,l})=\big((\mathbf{t}^1,\mathbf{e}^1,\mathbf{l}^1),\ldots,(\mathbf{t}^p,\mathbf{e}^p,\mathbf{l}^p)\big)$.

A Galton-Watson random forest will be a forest where the trees are mutually independent and each one has a Galton-Watson distribution with the same ordered offspring distribution (and label increment distribution, in the labelled case). We can thus let, for $\mathbf{w}\in \W_K$, $\pr^{(\mathbf w)}_{\z}$ be the distribution of $(\mathbf{T}^i,\mathbf{E}^i)_{i\in [|\mathbf{w}|]}$ where the $(\mathbf{T}^i,\mathbf{E}^i)$ are independent, and each $(\mathbf{T}^i,\mathbf{E}^i)$ has distribution $\pr^{(w_i)}_{\z}$ and, given also a list of initial labels $\varepsilon=(\varepsilon_1,\ldots,\varepsilon_{|\mathbf{w}|})$, $\pr^{(\mathbf{w}),(\epsilon)}_{\z,\nu}$ be the distribution of $(\mathbf{T}^i,\mathbf{E}^i,\mathbf{L}^i)_{i\in [|\mathbf{w}|]}$ where the terms of the sequence are independent and, for a given $i$, $(\mathbf{T}^i,\mathbf{E}^i,\mathbf{L}^i)$ has distribution $\pr^{(w_i,\varepsilon_i)}_{\zeta,\nu}.$

All previous notation will be adapted to forests, for example, the height of a forest $\mathbf{f}$ is the maximum of the heights of its elements,  $\mathbf{f}_{\leq n}$ is the forest where each tree has been cut at height $n$, and so on.

\bigskip

\noindent\textbf{Remarks concerning notation.} For readability, we will throughout the paper use the canonical variable $(\mathbf{T},\mathbf{E})$, which is simply the identity function of the space of $K$-type trees, as well as $(\mathbf{T},\mathbf{E},\mathbf{L})$, $(\mathbf{F},\mathbf{E})$ $(\mathbf{F},\mathbf{E},\mathbf{L})$ when looking at labelled trees or forests. Thus we will, for instance, write $\pr^{(i)}_{\zeta}\big((\mathbf{T},\mathbf{E})=(\mathbf{t},\mathbf{e})\big)$ instead of $\pr^{(i)}_{\zeta}(\mathbf{t},\mathbf{e})$, for a given type $i$ and a given $K$-type tree $(\mathbf{t},\mathbf{e})$.

Moreover, since we will never change the types and labels of vertices of a tree, we will often drop $\mathbf{e}$ and $\mathbf{l}$ from the notation, once again for readability, and, in the same vein, since we only consider one offspring distribution at a time, we also often drop $\zeta$ from the $\pr_{\zeta}$ notation.

\bigskip

\noindent\textbf{Local convergence of multi-type trees and forests.} Take a sequence of $K$-type forests $(\mathbf{f}^{(n)},\mathbf{e}^{(n)})_{n\in\N}$. We say that this sequence converges locally to a $K$-type forest $(\mathbf{f},\mathbf{e})$ if, for all $k\in\N$, and $n\in\N$ large enough (depending on $k$), we have $(\mathbf{f}^{(n)}_{\leq k},\mathbf{e}^{(n)}_{\leq k})=(\mathbf{f}_{\leq k},\mathbf{e}_{\leq k})$. This convergence can be metrized: we can for example set, for two $K$-type forests $(\mathbf{f},\mathbf{e})$ and $(\mathbf{f}',\mathbf{e}')$, $d\big((\mathbf{f},\mathbf{e}),(\mathbf{f}',\mathbf{e}')\big)=\frac{1}{1+p}$ where $p$ is the supremum of all integers $k$ such that $(\mathbf{f}_{\leq k},\mathbf{e}_{\leq k})=(\mathbf{f}'_{\leq k},\mathbf{e}'_{\leq k})$.

Convergence in distribution of random forests for this metric is simply characterized: if $(\mathbf{F}^{(n)},\mathbf{E}^{(n)})_{n\in\N}$ is a sequence of random $K$-type forests, it converges in distribution to a certain random forest $(\mathbf{F},\mathbf{E})$ if and only if, for all $k\in\N$ and finite $K$-type forests $(\mathbf{f},\mathbf{e})$, the quantity $\pr\big((\mathbf{F}^{(n)}_{\leq k},\mathbf{E}^{(n)}_{\leq k})=(\mathbf{f},\mathbf{e})\big)$ converges to $\pr\big((\mathbf{F}_{\leq k},\mathbf{E}_{\leq k})=(\mathbf{f},\mathbf{e})\big).$

All these definitions can directly be adapted to the case of spatial forests: when asking for equality between the forests below height $k$, we also ask equality of the labels below this height.

\subsection{Cutting a tree at the first generation of fixed type}\label{gen1}
In this section, we fix a reference type $j\in[K]$. We are interested in the \emph{first generation of type $j$}, that is, in a $K$-type tree $\mathbf{t}$, the set of vertices of $\mathbf{t}$ with type $j$ which have no ancestors of type $j$, except maybe for the root. We then call $C_j(\mathbf{t})$ the tree formed by all the vertices which lie below or on the first generation of type $j$, including all vertices which lie on branches withno individuals of this type. If $\mathbf{T}$ has distribution $\pr^{(i)}_{\zeta}$ for some type $i$, we let $\pr^{(i)}_{\text{cut}_j}$ be the distribution of $C_j(\mathbf{T})$ and let $\mu_{i,j}$ be the distribution of the number of leaves of $C_j(\mathbf{T})$ which have type $j$ (that is, the size of the first generation of type $j$ in $\T$). Finally, if $\T$ has distribution $\pr^{(j)}$, we let $\xi_{i,j}$ be the distribution of the number of vertices of type $i$ in $C_j(\mathbf{T})$ (excluding the root, so that when $i=j$ we end up with $\xi_{j,j}=\mu_{j,j}$).

The following proposition gives a few properties of the moments of the $\mu_{i,j}$ and $\xi_{i,j}$. Most of them are already proven in \cite{M08}.

\begin{prop}\label{moments} Let $i$ and $j$ be two different types.
\begin{itemize}
\item[(i)]
	\[\sum_{k=0}^{\infty} k\mu_{i,j}(k)=\frac{b_i}{b_j}.
\]
\item[(ii)] 
	\[\sum_{k=0}^{\infty} k\xi_{i,j}(k)=\frac{a_i}{a_j}.
\]
\item[(iii)] Assume that $\zeta$ has finite second moments. Then
	\[\text{Var}(\mu_{i,i})=\frac{\sigma^2}{a_ib_i^2},
\]
where the number $\sigma> 0$ is defined by $\sigma^2=\sum_{i,j,k} a_ib_jb_kQ^{(i)}_{j,k}$, with \\ $Q^{(i)}_{j,j}=\sum_{\mathbf{z}\in(\Z_+)^K} \mu^{(i)}(\mathbf{z}) z_j(z_j-1)$ and $Q^{(i)}_{j,k}=\sum_{\mathbf{z}\in(\Z_+)^K} \mu^{(i)}(\mathbf{z}) z_jz_k$ for $j\neq k$.
\item[(iv)] Assume that $\zeta$ is regular critical. Then $\mu_{i,i}$ and $\xi_{i,j}$ also have some finite exponential moments:
	\[\exists z>1, \sum_{n\in\Z_+} \mu_{i,i}(n) z^n <\infty\text{ and } \sum_{n\in\Z_+} \xi_{i,j}(n)z^n<\infty.
\]
\end{itemize}
\end{prop}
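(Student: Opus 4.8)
The plan is to treat the four statements by exploiting the recursive structure of the cut tree $C_j(\mathbf{T})$. The key observation is that $C_j(\mathbf{T})$ is itself a Galton-Watson tree for a modified offspring distribution: if we "stop" the tree at the first generation of type $j$, then for a root of type $i\neq j$ each child that is itself of type $j$ becomes a leaf, while each child of type $\ell\neq j$ spawns an independent copy of the same stopped construction. Concretely, $C_j(\mathbf{T})$ under $\pr^{(i)}$ is a $K$-type Galton-Watson tree whose offspring distribution, restricted to types in $[K]$, sends type $j$ to a leaf and otherwise behaves like $\zeta$. Equivalently, one can encode $C_j(\mathbf{T})$ as a \emph{two}-type object (type $j$ = "the stopping type", everything else = "transient"), and then $\mu_{i,j}$ is the law of the number of type-$j$ vertices produced, while $\xi_{i,j}$ (under $\pr^{(j)}$) is the number of type-$i$ transient vertices, both read off from this reduced GW tree.

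For (i) and (ii), I would set up the generating-function / first-moment recursions that this decomposition produces. Writing $n_{i,\ell}$ for the expected number of type-$\ell$ vertices appearing at the first generation of type $j$ when the root has type $i$ (for $\ell=j$ this is the mean of $\mu_{i,j}$), a one-step analysis gives a linear system: $n_{i,j} = m_{i,j} + \sum_{\ell\neq j} m_{i,\ell}\, n_{\ell,j}$, and similarly for the $\xi$ quantities. The point is then to check that the candidate solutions $n_{i,j}=b_i/b_j$ and (for the $\xi$ means) $a_i/a_j$ solve these systems. For (i) this is exactly the right-eigenvector relation $b_i=\sum_\ell m_{i,\ell} b_\ell$ rearranged: split the sum over $\ell$ into the $\ell=j$ term and the rest, divide by $b_j$. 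For (ii) one uses the left-eigenvector relation $a_\ell = \sum_i a_i m_{i,\ell}$ instead, and argues by a dual counting of type-$i$ vertices in $C_j$; the harmonic/martingale structure $(X_n)$ from \eqref{defmartingale} is the natural tool to make the $b$-weighted version rigorous (optional stopping at the first generation of type $j$, which is an a.s. finite stopping line by criticality), and an analogous stationarity argument in the stationary measure $\mathbf{a}$ handles the $\xi$ case. I would verify along the way that the relevant linear system is nonsingular (irreducibility of the sub-matrix indexed by types $\neq j$, which follows from irreducibility of $M$ together with criticality) so that the eigenvector solution is the unique one.

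For (iii), the variance of $\mu_{i,i}$: here I would differentiate the generating-function fixed-point equation a second time, or equivalently compute $\E[\#\{j\text{-leaves}\}(\#\{j\text{-leaves}\}-1)]$ by a second-moment recursion over the cut tree. Each internal vertex of type $\ell$ contributes, via its offspring vector, a term governed by $Q^{(\ell)}_{p,q}$ (the pair-correlations of the unordered offspring), and the number of ordered pairs of $j$-leaves descending from distinct children is handled by the first-moment weights already computed, i.e.\ by the $b$-ratios; summing a type-$\ell$ vertex's contribution against the expected number of such vertices — which the martingale/harmonic computation shows is proportional to $a_\ell/(a_i b_i)$ — and bookkeeping the $b_p b_q$ factors coming from the two chosen sub-lines yields $\sigma^2/(a_i b_i^2)$ after subtracting $(\E\mu_{i,i})^2 = 1$ (note $b_i/b_i=1$ from (i)). This is the most computation-heavy part; the main obstacle is organizing the second-moment bookkeeping cleanly — in particular correctly accounting for the fact that the two descendant $j$-leaves may split off at the same vertex (giving the $Q^{(\ell)}_{p,q}$ terms) and then travel down independent transient sub-trees, where the product of the two relevant first-moment factors reproduces $b_p b_q$ and the $z_p(z_p-1)$ versus $z_p z_q$ distinction matches the definition of $Q$.

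For (iv), finite exponential moments under regular criticality: the cleanest route is to realize $\mu_{i,i}$ (resp.\ $\xi_{i,j}$) as the total number of particles of a certain type produced by a multi-type Galton-Watson process — the "transient" process that dies whenever it hits type $j$ — and to invoke the standard fact that a \emph{subcritical} multi-type GW process with offspring distributions having small exponential moments has a total progeny (and a total count of any fixed type) with small exponential moments. The transient sub-process is subcritical precisely because $M$ is critical and irreducible: deleting the $j$-row/column strictly decreases the Perron root (Perron-Frobenius, since $M$ is irreducible the principal submatrix has spectral radius $<1$). Its offspring laws inherit exponential moments from those of $\zeta$ since $|\mathbf{w}|$ only decreases under the cutting. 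Then I would quote (or quickly reprove via the fixed-point equation for the generating function of the total population, which has an analytic solution near $1$ by the implicit function theorem when the Perron root is $<1$) the exponential-moment bound, and transfer it to $\mu_{i,i}$ and $\xi_{i,j}$, which are linear read-outs of that total population. The one technical point to be careful about is that $\mu_{i,i}$ counts $j$-leaves attached to a root of type $i=j$: here the root itself is of the stopping type, so one conditions on its offspring (finitely many, with exponential tails) and sums independent transient sub-processes rooted at the children — a finite-but-random sum of exponential-moment variables, which again has an exponential moment because the number of children does. I expect step (iii) to be the main obstacle overall, with (iv)'s subcriticality-of-the-transient-process claim the main conceptual input for the last part.
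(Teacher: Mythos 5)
Your proposal is essentially correct, but it takes a genuinely different (and more self-contained) route than the paper's: the paper proves almost nothing here itself, deferring points (ii)--(iv) entirely to Proposition 4 of \cite{M08}, and for (i) it supplies only half the argument --- it quotes from \cite{M08} the normalization $\sum_k k\mu_{j,j}(k)=1$ (obtained there by erasing the types other than $j$ one at a time while preserving criticality, until a critical monotype tree remains), uses this to turn the one-step decomposition into the full eigenvector equation $\mathbf{c}=M\mathbf{c}$, and concludes by uniqueness of the Perron eigenvector. You instead close the linear system on the types $\ell\neq j$ and invert $I-M^{(j)}$, trading the quoted normalization for the Perron--Frobenius fact that a proper principal submatrix of an irreducible matrix of spectral radius $1$ has spectral radius $<1$; this is cleaner, avoids the external input, and is the same fact your point (iv) needs anyway. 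Two small things to tighten if you write this out. First, in (i)/(ii) the ``unique solution of a nonsingular system'' argument presupposes the means are finite; either prove that first or, better, construct the solution directly as the convergent Neumann series $\sum_{k\geq0}(M^{(j)})^k m_{\cdot j}$. Second, in (iii) your intermediate constant is off: the expected number of internal type-$\ell$ vertices of $C_i(\mathbf{T})$ is $a_\ell/a_i$ (this is exactly point (ii), plus the root for $\ell=i$), not $a_\ell/(a_ib_i)$; the $b_i^{-2}$ in the final answer comes instead from the two descent factors $b_p/b_i$ and $b_q/b_i$ attached to the two children at the branch point. With that correction the bookkeeping $\mathbb{E}[N(N-1)]=\sum_\ell\frac{a_\ell}{a_i}\sum_{p,q}Q^{(\ell)}_{p,q}\frac{b_pb_q}{b_i^2}=\sigma^2/(a_ib_i^2)$ closes, and $\mathrm{Var}(N)=\mathbb{E}[N(N-1)]+\mathbb{E}[N]-\mathbb{E}[N]^2=\mathbb{E}[N(N-1)]$ since $\mathbb{E}[N]=1$. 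Your argument for (iv) (subcriticality of the process killed at type $j$, plus exponential moments of the total progeny of a subcritical process with exponentially integrable offspring, assembled via random sums) is the standard one and is sound.
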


\begin{proof}
We start with point $(i)$. We fix $j\in [K]$ and, for all $i\in [K]$, let $c_i=\sum_{k=0}^{\infty} k\mu_{i,j}(k)$. The proof that $c_i=\frac{b_i}{b_j}$ for all $i$ is done in two steps: first, show that $c_j=1$ and then that the vector $\mathbf{c}=(c_i)_{i\in[K]}$ is a right eigenvector of $M$ for the eigenvalue $1$.

The fact that $c_j=1$ is proven in \cite{M08}, Proposition 4, $(i)$. It is obtained by removing the types different from $j$ one by one, and noticing that criticality is conserved at every step until we are left with a critical monotype Galton-Watson tree.

To prove that $\mathbf{c}$ is a right eigenvector of $M$, consider a type $i\in[K]$ and apply the branching property at height $1$ in a tree with distribution $\pr^{(i)}_{\zeta}$, we get
\begin{align*}
c_i&=\sum_{\mathbf{z}\in(\Z_+)^K} \mu^{(i)}(\mathbf{z}) \Big(\sum_{l\in[K]\setminus\{j\}}  z_lc_l + z_j\Big) \\
   &= \sum_{\mathbf{z}\in(\Z_+)^K} \mu^{(i)}(\mathbf{z}) \Big(\sum_{l=1}^K z_lc_l\Big) \\
   &= \sum_{l=1}^K m_{i,l}c_l.
\end{align*}
Since $\sum_{l=1}^k m_{i,l}c_l$ is the $i$-th component of $(M\mathbf{c})$, the proof is complete.

\medskip

Point $(ii)$ was also proven in \cite{M08}, as part of the proof of Proposition 4, $(ii)$. Similarly, points $(iii)$ and $(iv)$ feature in \cite{M08}, Proposition 4.

\end{proof}




\subsection{Size of a tree and periodicity}\label{sec:per}
As said earlier, we plan on conditioning trees on being large. To do this extent, we need to define a notion of ``size" of a tree. One natural notion of size would be the total number of vertices of the tree. Another one, which, as will be shown later, is easier to work with combinatorially, would be to count only the number of vertices of one fixed type. We propose a fairly general notion of size which contains the above two examples: let $\gamma=(\gamma_1,\ldots,\gamma_K)$ be a vector of non-negative integers, one of them at least being non-zero. We then let, for a $K$-type tree $\mathbf{(t,e)}$
	\[|\mathbf{t}|_{\gamma}=\sum_{i=1}^K \gamma_i \#_i(\mathbf{t})
\]
where $\#_i(\mathbf{t})$ denotes the number of vertices of $\mathbf{t}$ with type $i\in[K]$.

One consequence of criticality is that, while a Galton-Watson tree with ordered offspring distribution $\zeta$ is almost-surely finite, the expected value of its size is infinite.

\begin{lemma}\label{expinf} For all $i\in[K]$, we have
	\[
\mathbb{E}^{(i)}\big[|\mathbf{T}|_{\gamma}\big]=\infty.
\]
\end{lemma}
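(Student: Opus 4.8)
The plan is to compute $\mathbb{E}^{(i)}\big[|\mathbf{T}|_\gamma\big]$ by summing expected generation sizes and to show that the relevant series diverges. Writing $\mathbf{Z}_n = (Z^{(j)}_n)_{j\in[K]}$ for the vector of generation-$n$ population sizes as in the excerpt, one has $|\mathbf{T}|_\gamma = \sum_{n\ge 0} \gamma\cdot\mathbf{Z}_n$ (a sum of non-negative terms, so monotone convergence applies freely), hence
\[
\mathbb{E}^{(i)}\big[|\mathbf{T}|_\gamma\big] = \sum_{n=0}^\infty \gamma\cdot \mathbb{E}^{(i)}[\mathbf{Z}_n].
\]
The first step is to identify $\mathbb{E}^{(i)}[\mathbf{Z}_n]$. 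By the branching property and an immediate induction on $n$, the $j$-th coordinate of $\mathbb{E}^{(i)}[\mathbf{Z}_n]$ equals $(M^n)_{i,j}$, the $(i,j)$ entry of the $n$-th power of the mean matrix $M$. Therefore $\gamma\cdot\mathbb{E}^{(i)}[\mathbf{Z}_n] = (M^n\gamma)_i$, and the quantity to control is $\sum_{n\ge 0}(M^n\gamma)_i$.

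The second step is to use Perron-Frobenius. Since $M$ is irreducible with spectral radius $\rho = 1$ (criticality), it has strictly positive left and right eigenvectors $\mathbf{a}$ and $\mathbf{b}$ for the eigenvalue $1$, normalized so that all their entries are positive. Because $\gamma$ is a non-negative vector with at least one strictly positive entry, $\mathbf{a}\cdot\gamma > 0$; and because $\mathbf{b}$ has all entries strictly positive, $(\mathbf{b} (\mathbf{a}\cdot\gamma))_i = b_i (\mathbf{a}\cdot\gamma) > 0$. The cleanest way to extract a lower bound on $(M^n\gamma)_i$ is to dot with $\mathbf{a}$: since $\mathbf{a}M = \mathbf{a}$, we get $\mathbf{a}\cdot(M^n\gamma) = \mathbf{a}\cdot\gamma$ for every $n$, a constant that is bounded away from $0$. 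Now $\mathbf{a}\cdot(M^n\gamma) = \sum_j a_j (M^n\gamma)_j$, and all terms are non-negative, so at least one coordinate $(M^n\gamma)_{j}$ is $\ge (\mathbf{a}\cdot\gamma)/(K\max_j a_j)$; to transfer this to the fixed coordinate $i$, use irreducibility: there is a power $p$ (depending only on $i$ and $j$, and we may take $p \le K$) and a constant $\delta > 0$ with $(M^p)_{i,j'} \ge \delta$ for a suitable $j'$ in every residue class reachable, so that $(M^{n+p}\gamma)_i \ge \delta\,(M^n\gamma)_{j'}$. Combining, one obtains a constant $c>0$, independent of $n$, with $(M^{n}\gamma)_i \ge c$ for all $n$ large enough, whence $\sum_n (M^n\gamma)_i = \infty$.

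A slightly slicker alternative for the third step, which I would actually use to avoid the bookkeeping above, is to invoke the full Perron-Frobenius convergence theorem in its Cesàro form: for an irreducible non-negative matrix with $\rho=1$, one has $\frac{1}{N}\sum_{n=0}^{N-1} M^n \to \frac{\mathbf{b}\mathbf{a}^\top}{\mathbf{a}\cdot\mathbf{b}}$ entrywise (this holds with or without aperiodicity), and the limit matrix has strictly positive $(i,j)$ entry $\frac{b_i a_j}{\mathbf{a}\cdot\mathbf{b}}$. Hence $\frac{1}{N}\sum_{n=0}^{N-1}(M^n\gamma)_i \to \frac{b_i (\mathbf{a}\cdot\gamma)}{\mathbf{a}\cdot\mathbf{b}} > 0$, which forces $\sum_n (M^n\gamma)_i = +\infty$, and therefore $\mathbb{E}^{(i)}[|\mathbf{T}|_\gamma] = \infty$. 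The main obstacle, such as it is, is dealing with periodicity: $M$ need not be aperiodic, so $M^n$ itself need not converge, and one must either pass to a Cesàro average (as just described) or argue coordinate-by-coordinate through an irreducibility step as in the previous paragraph; either route is routine, but it is the one place care is needed. Everything else — the interchange of sum and expectation (monotone convergence, non-negative terms) and the identification $\mathbb{E}^{(i)}[\mathbf{Z}_n]_j = (M^n)_{i,j}$ (induction via the branching property) — is standard.
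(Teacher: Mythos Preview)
Your proof is correct; the Ces\`aro argument in particular is clean and handles periodicity without fuss. The first variant (transferring the lower bound from a varying coordinate $j(n)$ back to the fixed $i$ via irreducibility) is a little hand-wavy as written, since $j(n)$ depends on $n$ and one has to pick a single power $p$ working for all $j\in[K]$; but as you note, the Ces\`aro route sidesteps this entirely.

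The paper's approach is different in spirit: rather than slicing $\mathbf{T}$ by height and invoking Perron--Frobenius on $M^n$, it fixes a type $j$ and slices by \emph{generations of type $j$} (the $k$-th such generation being those type-$j$ vertices whose nearest type-$j$ ancestor lies in the $(k-1)$-th). By Proposition~\ref{moments}(i), the expected size of each such generation is a fixed positive constant (the first is $b_i/b_j$, and each subsequent one multiplies by $1$ since $\mu_{j,j}$ has mean $1$), so the sum over $k$ diverges. This is shorter because it cashes in on machinery already set up in Section~\ref{gen1}, whereas your argument is more self-contained and would work verbatim in any treatment of critical multi-type branching that has the mean matrix and Perron--Frobenius available but not the reduced single-type tree.
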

\begin{proof} We just need to check that, for all $j\in[K]$, we have $\mathbb{E}^{(i)}\big[\#_j\mathbf{T}\big]=\infty.$ Let us generalize the previous section by calling, recursively, for $k>1$, the \emph{$k$-th generation of type $j$} of a tree the set of its vertices of type $j$ whose closest ancestor of type $j$ is in the $(k-1)$-th generation of type $j$. By Proposition \ref{moments}, point $(i)$, the number of vertices on each of those generations has expected value $\frac{b_j}{b_i}>0$, and thus their sum has infinite expected value.
\end{proof}
As it happens, when $\mathbf{(T,E)}$ is a Galton-Watson tree, then $|\mathbf{T}|_{\gamma}$ cannot take any integer value. For example, in a classical monotype tree, if an individual can only have an even number of children, then the total number of vertices in the tree has to be odd. Here is a precise statement for  the general case.
\begin{prop}\label{per}
There exists an integer $d\in \N$ and integers $\alpha_i$ in $\{0,1,\ldots,d-1\}$ for all types $i\in[K]$ such that, for $n\in\Z_+$:
\begin{itemize}
\item if $\pr^{(i)}(|\mathbf{T}|_{\gamma}=n)>0$, then $n\equiv \alpha_i \pmod d$.
\item if $n\equiv \alpha_i \pmod d$ and $n$ is large enough, then $\pr^{(i)}(|\T|_{\gamma}=n)>0$.
\end{itemize}
\end{prop}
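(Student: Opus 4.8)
The plan is to set up a lattice-theoretic argument on the possible values of $|\mathbf{T}|_\gamma$ under each $\pr^{(i)}$, exploiting irreducibility to tie the different types together. Concretely, for each $i\in[K]$ let $S_i=\{n\in\Z_+ : \pr^{(i)}(|\mathbf{T}|_\gamma=n)>0\}$. The first thing to observe is that $S_i$ is nonempty (the single-vertex tree of type $i$, or some small tree, has positive probability as soon as $\zeta^{(i)}$ is a genuine probability measure, which holds by criticality), and that $S_i$ is closed under a natural additive operation coming from the branching property: if $\mathbf{w}$ is in the support of $\zeta^{(i)}$ and one grafts onto a root of type $i$ subtrees contributing $n_1,\dots,n_{|\mathbf{w}|}$ respectively (with $n_k\in S_{w_k}$), the resulting tree contributes $\gamma_i+\sum_k n_k$ and has positive probability. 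So $S_i\supseteq \gamma_i + \sum_{k} S_{w_k}$ for every such $\mathbf{w}$.

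The next step is to define $d$ as the gcd generating the asymptotic lattice. I would let $d=\gcd\big\{a-b : a,b\in S_i\big\}$ for a fixed $i$, and first check this does not depend on $i$: given types $i,j$, irreducibility lets us find a tree of root type $i$ containing a vertex of type $j$ (and vice versa); replacing the subtree hanging from that $j$-vertex by two different elements of $S_j$ produces two elements of $S_i$ whose difference is an arbitrary element-difference from $S_j$, so the difference-groups coincide, and hence so do their gcds; call the common value $d$. Then set $\alpha_i$ to be the common residue mod $d$ of all elements of $S_i$ — well-defined precisely because all differences within $S_i$ are multiples of $d$. This immediately gives the first bullet: $\pr^{(i)}(|\mathbf{T}|_\gamma=n)>0 \Rightarrow n\in S_i \Rightarrow n\equiv\alpha_i\pmod d$.

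For the second bullet — eventual realizability of every admissible residue class — the workhorse is the Chicken McNugget / numerical-semigroup fact: a subset of $\Z_+$ closed under addition and with gcd of differences equal to $d$, and contained in a single residue class mod $d$, contains all sufficiently large elements of that class. The mild subtlety is that $S_i$ is not literally closed under addition, only under the grafting operation above; so I would first produce, for each $i$, a ``return structure'': using irreducibility, find a tree with root type $i$ having a distinguished vertex of type $i$ strictly below the root, all with positive probability; iterating the grafting at that vertex shows $S_i$ contains an arithmetic-progression-like cofinal subset with common difference some fixed $m_i$, and combining with the difference-gcd computation one upgrades this to: $S_i$ contains $c_i + d\,\Z_+$ for some constant $c_i$. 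Together with $S_i\subseteq \alpha_i+d\Z_+$ this is exactly the claim, with $\alpha_i\in\{0,\dots,d-1\}$ the reduction of $c_i$.

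**The main obstacle** I anticipate is the bookkeeping around the grafting/return argument: making precise that one can find, for each pair of types, a positive-probability ``transition subtree'' from type $i$ to type $j$ (this is where irreducibility of $M$, i.e. of the mean matrix, must be converted into a statement about supports of $\zeta^{(i)}$ — one needs that $m_{i,j}>0$ forces some word $\mathbf{w}$ in the support of $\zeta^{(i)}$ with $p_j(\mathbf{w})>0$, and then chains these along a path witnessing $(M^p)_{i,j}>0$), and then assembling these transitions into a genuine return tree to type $i$ to get the cofinal arithmetic progression. None of the individual steps is deep, but one must be careful that the degenerate-case hypothesis $\mu^{(i)}(\{\mathbf z:\sum z_j\neq 1\})>0$ for some $i$, combined with irreducibility, rules out the pathological situation where every $S_i$ is a singleton (which would make $d$ ill-defined). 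Handling that hypothesis cleanly is the one place I would slow down and argue carefully.
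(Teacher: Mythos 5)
Your treatment of the first bullet is sound and essentially matches the paper's: defining $d$ as the gcd of the difference set of $S_j$ for one type and using irreducibility to transplant subtrees and show that the residue classes are consistent across types is exactly how the paper obtains the $\alpha_i$ for $i\neq j$. The gap is in the second bullet, at the sentence ``combining with the difference-gcd computation one upgrades this to: $S_i$ contains $c_i+d\Z_+$''. A single return structure gives $S_i\supseteq m_i+S_i$ for one weight $m_i$, hence an arithmetic progression of common difference $m_i$; but $m_i$ is only guaranteed to be a \emph{multiple} of $d$, and the two properties you have actually established --- (a) the differences of $S_i$ have gcd $d$, and (b) $S_i$ is stable under adding $m_i$ --- do not imply the conclusion: the set $S=\{1\}\cup 2\Z_+$ has difference-gcd $1$ and satisfies $S\supseteq 2+S$, yet misses every odd integer $\geq 3$. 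What is needed is that the set of \emph{realizable increments} (the additive semigroup $R_i$ of weights $c$ with $S_i\supseteq c+S_i$, or some comparable family of grafting operations) itself has gcd exactly $d$; only then does the numerical-semigroup fact, applied to $R_i$ rather than to $S_i$, give $S_i\supseteq n_0+\{\text{large multiples of }d\}$.

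Establishing that the realizable increments generate $d\Z$ is the real content of the proposition, and it is where the paper's proof does its work: it picks a type $j$ with $\zeta^{(j)}(\emptyset)>0$ (such a type exists because the tree is a.s.\ finite) and decomposes $S_j$ recursively by cutting at the first generation of type $j$. This exhibits an explicit generating set for the difference group --- the weights $a_x$ of completed subtrees containing no type-$j$ vertex besides the root, and the weights $b_y+p_y\gamma_j$ of loops through type $j$ --- each of which is visibly realizable as an increment with the required freedom (absorbing the $a_x$ into return loops uses the existence of a cut tree whose first generation of type $j$ has at least two elements, which follows from criticality together with $\zeta^{(j)}(\emptyset)>0$), so that Lemma \ref{coin} applies with one unbounded multiplicity. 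If you keep your more abstract definition of $d$ as the gcd of differences, you must supply this missing identification; as written, the argument breaks down precisely where the eventual period could a priori be a proper multiple of $d$.
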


\begin{rem}
This immediately extends to forests: if $\mathbf{w}\in\W_K$, then let $\alpha_{\mathbf{w}}\in\{0,\ldots,d-1\}$ such that $\alpha_{\mathbf{w}}\equiv \sum_{k=1}^{|\mathbf{w}|} \alpha_{w_k}$. Then the size in a forest with distribution $\pr^{(\mathbf{w})}_{\zeta}$ is a.s. of the form $\alpha_{\mathbf{w}}+dn$ with $n\in\Z_+$ and, if $n$ is large enough, the forest has size $\alpha_{\mathbf{w}}+dn$ with non-zero probability.
\end{rem}

The proof of Proposition \ref{per} requires the following lemma, which is a variant of the well-known ``Frobenius coin problem".
\begin{lemma}\label{coin} Let $n_1,\ldots,n_p$ be $p$ non-negative integers and let $d=gcd(n_1,\ldots,n_p)$. There exists integers $N_2,\ldots,N_p$ such that the set 
	\[\Big\{\sum_{i=1}^k k_in_i: k_1\in\Z_+ ,\; k_i\in \{0,\ldots,N_i\} \, \forall i\in\{2,\ldots,p\}\Big\}
\]
contains all large enough multiples of $d$.
\end{lemma}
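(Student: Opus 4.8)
The plan is to reduce to the classical one-variable Frobenius coin problem. First I would recall the standard fact that if $m_1, \ldots, m_r$ are positive integers with $\gcd(m_1,\ldots,m_r) = 1$, then every sufficiently large integer is a nonnegative integer combination $\sum k_i m_i$ of them; this is the Chicken McNugget / Frobenius theorem. Applied to our situation after dividing through by $d$, we get that every sufficiently large multiple of $d$ can be written as $\sum_{i=1}^p k_i n_i$ with all $k_i \in \Z_+$. So the content of the lemma beyond the classical statement is purely that the coefficients $k_2, \ldots, k_p$ can be taken from a \emph{bounded} range $\{0,\ldots,N_i\}$, at the cost of leaving $k_1$ unbounded. (I should first dispose of trivial cases: if some $n_i = 0$ it contributes nothing and can be dropped; if all $n_i$ are zero the statement is vacuous or trivial; and I may assume $n_1 > 0$ after reordering, which is where the unbounded coefficient will sit.)

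The key step is then an averaging/subtraction trick. Suppose $m$ is a large multiple of $d$ and write $m = \sum_{i=1}^p k_i n_i$ with $k_i \in \Z_+$ using the classical result. If some $k_i$ with $i \geq 2$ is too large, I want to trade $n_1$ copies of $n_i$ for $n_i$ copies of $n_1$: indeed $n_1 n_i = n_i n_1$, so replacing $k_i \mapsto k_i - n_1$ and $k_1 \mapsto k_1 + n_i$ leaves the sum $m$ unchanged while decreasing $k_i$ by $n_1$ and keeping all coefficients nonnegative as long as $k_i \geq n_1$. Iterating, I can reduce every $k_i$ with $i \geq 2$ to lie in $\{0, 1, \ldots, n_1 - 1\}$. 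Hence the choice $N_i = n_1 - 1$ for $i \in \{2,\ldots,p\}$ works: every sufficiently large multiple of $d$ lies in the displayed set. (One should double-check the edge case $p = 1$, where the set is just $\{k_1 n_1\}$ and the claim is the obvious fact that large multiples of $d = n_1$ are multiples of $n_1$; and the case where $n_1$ ends up being $d$ itself, which only makes things easier.)

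The main obstacle — really the only subtle point — is making sure the classical Frobenius statement is being invoked with the right normalization, i.e. that after dividing by $d$ the integers $n_i/d$ are coprime so that \emph{every} large integer (not just every large multiple of something) is representable; this is where $d = \gcd$ enters essentially. Everything else is the elementary exchange argument above. I would write this up in three short paragraphs: (1) trivial reductions and reduction to $\gcd = 1$ by scaling; (2) the classical representability of all large integers; (3) the subtraction trick to bound the coefficients $k_2,\ldots,k_p$ by $n_1 - 1$. No heavy computation is needed.
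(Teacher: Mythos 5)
Your proposal is correct, but it is organized differently from the paper's proof. The paper first reduces to $p=2$ by a (stated but not detailed) induction, and then handles $p=2$ in a self-contained way: after dividing by the gcd it picks $k_2\in\{0,\ldots,n_1-1\}$ as the representative of $n\,n_2^{-1}$ modulo $n_1$ and checks that the resulting $k_1$ is nonnegative once $n\geq n_1n_2$ — so representability and the boundedness of $k_2$ are obtained in one stroke, with explicit constants, and the classical Frobenius statement is never invoked as a black box. You instead cite the general-$p$ Frobenius/Schur theorem for bare representability and then add the exchange move $k_i\mapsto k_i-n_1$, $k_1\mapsto k_1+n_i$ (valid since $n_1n_i=n_in_1$ and terminating since $n_1\geq 1$) to force $k_i\leq n_1-1$ for $i\geq 2$. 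This buys a direct argument for all $p$ at once, with the same bound $N_i=n_1-1$, at the cost of outsourcing the representability step that the paper proves itself; both are perfectly adequate here since the paper explicitly does not care about the values of the $N_i$. One small point you already flag but that is worth keeping explicit in a write-up: the index carrying the unbounded coefficient must correspond to a strictly positive $n_1$ (the lemma is false as literally stated if $n_1=0$ while some other $n_j>0$), so the reordering is not merely cosmetic.
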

The values of $N_2,\ldots,N_p$ are of no importance for us in this paper. All we need to know is that, when adding multiples of $n_1,\ldots,n_p$, if we allow all multiples of $n_1$, then we only need a finite amount of multiples of the others.

\begin{proof}
A straightforward induction shows that, if we can prove Lemma \ref{coin} in the case $p=2$, then we can generalize it to all $p$. We will thus restrict ourselves to the case where $p=2$, and can in fact further simplify the problem by dividing $n_1$ and $n_2$ by their gcd, making them coprime. In this case, $N_2$ and ``large enough" can easily be explicited: we will show that every integer greater than or equal to $n_1n_2$ can be written as $k_1n_1+k_2n_2$ with $k_1\in\Z_+$ and $k_2\in\{0,\ldots,n_1-1\}$.

Let $n\geq n_1n_2$. Since $n_1$ and $n_2$ are coprime, $n_2$ is invertible modulo $n_1$, and we know that there exists $k_2$ in $\{0,\ldots,n_1-1\}$ such that $k_2n_2 \equiv n \pmod {n_1}$. Therefore there exists $k_1\in\Z$ such that $n-k_2n_2=k_1n_1$, and since, $n\geq n_1n_2$ and $k_2 \leq n_1-1$, we also have $k_1\geq0$.
\end{proof}

Throughout the following proof, we use for $i\in[k]$ the notation $\TT^{(i)}_{\zeta}$ for the set of trees which can be obtained with positive probability starting with a root of type $i$:
	\[\TT^{(i)}_{\zeta}=\Big\{\mathbf{t}: \pr^{(i)}_{\zeta}\big(\mathbf{T}=\mathbf{t}\big)>0\Big\}
\]
\noindent\textbf{Proof of Proposition \ref{per}:} let $j\in[K]$ be a type such that $\zeta^{(j)}(\emptyset)>0$, i.e. an individual of type $j$ can die without having any children. We start by proving the existence of $d$ and $\alpha_j$ by focusing on what happens when we jump from one generation of type $j$ to the next. Let 
	\[G=\Big\{n\in\N, \pr^{(j)}(|\mathbf{T}|_{\gamma}=n)>0\Big\}.
\]
Let us also introduce some notation: if $A_1,\ldots,A_p$ are subsets of $\Z$, then we let $A_1+\ldots+A_p$ be their \emph{sumset}, that is the set of integers which can be obtained as sums $\sum_{i=1}^p a_i$ with $a_i\in A_i$ for all $i$. We also let $G^{+p}$ the p-fold iterated sumset of $G$:
	\[G^{+p}=\big\{\sum_{i=1}^p n_i: \forall i, n_i\in G\big\}
\]

The set $G$ can be obtained inductively by cutting $\T$ at its first generation of type $j$, and then grafting new trees at each vertex of this generation. To be precise, take a tree $\mathbf{t}\in \TT^{(j)}_{\zeta}$. Let $a_{\mathbf{t}}$ be the sum of all the $\gamma$-weights of all the vertices which do not have type $j$ in the cut tree $C_j(\mathbf{t})$, and let $p_{\mathbf{t}}$ be the number of vertices in the first generation of type $j$ of $\mathbf{t}$. We then have
	\[G = \bigcup_{\mathbf{t}\in\TT^{(j)}_{\zeta}} \{\gamma_j+a_{\mathbf{t}}\} + G^{+p_{\mathbf{t}}}
\]
Note of course that there is much redundance in this union, since $a_{\mathbf{t}}$ and $p_{\mathbf{t}}$ only depend on $C_j(\mathbf{t})$ ($\mathbf{t}$ up to its first generation of type $j$). Next, we do some reindexing to remove the overlap, and at the same time separate the union into three classes:
\begin{itemize}
\item the tree with only one vertex (of type $j$) is isolated in its own class.
\item the second class contains the trees with no vertices of type $j$ except for the root.
\item the third class contains all the other possible trees cut at their first generation of type $j$.
\end{itemize} 
We thus obtain
	\[G = \{\gamma_j\} \cup \bigcup_{x\in X} \{\gamma_j + a_x\} \cup \bigcup_{y\in Y} \{\gamma_j+b_y\} + G^{+p_y} 
\]
where $X$ and $Y$ are two abstract sets which we need not worry about, $a_x\in\N$ for all $x\in X$ and $(b_y,p_y)\in \Z_+\times\N$ for $y\in Y$. Note that $Y$ is non-empty (by criticality or aperiodicity).

It then follows that
	\[G=\Big\{\gamma_j + \sum_{x\in X}k_x a_x +\sum_{y\in Y} k_y (b_y+p_y\gamma_j): \sum_{x\in X}k_x \leq\sum_{y\in Y}k_yp_y  \Big\}.
\]
From this, let 
	\[d=gcd\Big(\{a_x:x\in X\} \cup \{b_y+p_y\gamma_j:y\in Y\}\Big)
\]
and let $\alpha_j\in\{0,\ldots,d-1\}$ be the remainder of $\gamma_j$ mod $d$. Then it is immediate that $G\subset \alpha_j+d\Z_+$, and Lemma \ref{coin} ensures that $G$ contains all large enough members of $\alpha_j+d\Z_+$ (the condition $\sum_{x\in X}k_x \leq\sum_{y\in Y}k_yp_y$ being weaker than the condition of Lemma \ref{coin}, we in fact only need all the $k_x$ and $k_y$ to be in a finite set, except for any one specific $k_y$).

\medskip

Now, let $i$ be a different type from $j$. We first want to show that, if $\mathbf{t}$ and $\mathbf{t'}$ are both in $\TT^{(i)}_{\zeta}$, then $|\mathbf{t}|_{\gamma}\equiv |\mathbf{t'}|_{\gamma} \pmod d$. To this end, consider a tree $\mathbf{t}^0$ in $\TT^{(j)}_{\zeta}$ which contains at least one vertex $u$ of type $i$ (such a tree exists by virtue of irreducibility). Now let $\mathbf{t}^1$ and $\mathbf{t}^2$ to be $\mathbf{t}^0$ except that we replace the subtree rooted at $u$ by, respectively, $\mathbf{t}$ and $\mathbf{t'}$. Both $\mathbf{t}^1$ and $\mathbf{t}^2$ also belong to $\TT^{(j)}_{\zeta}$, which implies that $|\mathbf{t}^1|_{\gamma}\equiv|\mathbf{t}^2|_{\gamma}\equiv \alpha_j \pmod d$, which itself implies $|\mathbf{t}|_{\gamma}\equiv|\mathbf{t'}|_{\gamma} \pmod d$. This shows the existence of $\alpha_i$.

Finally, we want to show that, if $n$ is large enough, then $\pr^{(i)} \big(|\mathbf{T}|_{\gamma}=\alpha_i+dn\big)>0$. Take any tree $\mathbf{(t,e)}\in\TT^{(i)}_{\zeta}$ which contains at least one vertex $u$ of type $j$. Let $m$ and $p$ be integers such that $|\mathbf{t}|_{\gamma}=\alpha_i+dm$ and $|\mathbf{t}_u|_{\gamma}=\alpha_j+dp$, where $\mathbf{t}_u$ is the subtree rooted at $u$. We know that, if $n$ is large enough, there exists $\mathbf{t'}$ in $\TT^{(j)}_{\zeta}$ such that $|\mathbf{t'}|_{\gamma}=\alpha_j+d(n+p-m)$. Replacing $\mathbf{t}_u$ by $\mathbf{t'}$ in $\mathbf{t}$ then yields a tree with size $\alpha_i+dn$ which itself is in $\TT^{(i)}_{\zeta}$, thus ending our proof.
\qed

\vspace{1cm}

Proposition \ref{per} can be refined in the case where we only count the number of vertices of one specific type. We leave the proof of the following corollary to the reader.
\begin{cor}\label{per1type} Assume that $\gamma_i=\mathbbm{1}_{i=1}$. Then:
\begin{itemize}
\item the period $d$ is gcd of the support of $\mu_{1,1}$, and $\alpha_1=1$.
\item for $i\in \{2,\ldots,K\}$ the measure $\mu_{i,1}$ is supported on $\alpha_i+d\Z_+$.
\end{itemize}
\end{cor}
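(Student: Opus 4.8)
The plan is to specialize the arguments in the proof of Proposition \ref{per} to the case $\gamma_i = \mathbbm{1}_{i=1}$, where the general notion of size collapses to the number of vertices of type $1$. The key simplification is that in this setting, when we cut a tree with root of type $1$ at its first generation of type $1$, the $\gamma$-weight contributed by the cut tree $C_1(\mathbf{t})$ above the first generation is zero (none of those vertices have type $1$ except the root), so the decomposition $G = \bigcup_{\mathbf{t}\in\TT^{(1)}_{\zeta}} \{\gamma_1 + a_{\mathbf{t}}\} + G^{+p_{\mathbf{t}}}$ reduces to $G = \bigcup_{p} \{1\} + G^{+p}$, where $p$ ranges over the support of $\mu_{1,1}$ (the distribution of the size of the first generation of type $1$). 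In particular $\gamma_1 = 1$ immediately gives $\alpha_1 = 1$.

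First I would establish that the period $d$ is the gcd of the support of $\mu_{1,1}$. From $G = \{1\} \cup \bigcup_{p \in \mathrm{supp}(\mu_{1,1})} (\{1\} + G^{+p})$ and the fact that $1 \in G$ (take the single-vertex tree, which has positive probability since $\zeta^{(1)}(\emptyset) > 0$ — one checks this holds, or else reduce to a type where it does), one sees by induction that every element of $G$ is of the form $1 + \sum_k p_k$ with each $p_k$ in the support of $\mu_{1,1}$; hence $G \subset 1 + d'\Z_+$ where $d' = \gcd(\mathrm{supp}(\mu_{1,1}))$. Conversely, iterating the single step $n \mapsto n + (p-1)$ for $p$ in the support shows $G$ contains all large enough elements of $1 + d'\Z_+$, via Lemma \ref{coin} applied to the shifted increments $\{p - 1 : p \in \mathrm{supp}(\mu_{1,1})\}$ — wait, more carefully, one runs the same sumset computation as in the proof of Proposition \ref{per} but with $a_x$ absent and $b_y + p_y \gamma_1 = p_y$, so $d = \gcd\{p_y\} = d'$. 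This matches the $d$ produced by Proposition \ref{per}, so the period coincides with $\gcd(\mathrm{supp}(\mu_{1,1}))$.

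Next I would handle the claim that $\mu_{i,1}$ is supported on $\alpha_i + d\Z_+$ for $i \neq 1$. Recall $\mu_{i,1}$ is the distribution of the number of type-$1$ vertices in $C_1(\mathbf{T})$ when $\mathbf{T}$ has law $\pr^{(i)}$ — but actually one should be careful: $\mu_{i,1}$ as defined in Section \ref{gen1} is the distribution of the number of \emph{leaves} of $C_1(\mathbf{T})$ of type $1$, i.e. the size of the first generation of type $1$ starting from a root of type $i$. The point is that in $C_1(\mathbf{T})$ all type-$1$ vertices other than possibly the root are exactly these leaves, and for $i \neq 1$ the root is not of type $1$, so this count equals $\#_1(C_1(\mathbf{T})) = |C_1(\mathbf{T})|_\gamma$. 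Then any $\mathbf{t} \in \TT^{(i)}_\zeta$ that is already cut at its first generation of type $1$ satisfies $|\mathbf{t}|_\gamma \equiv \alpha_i \pmod d$ by the definition of $\alpha_i$ from Proposition \ref{per}; since $C_1(\mathbf{T})$ ranges over such trees, $\mu_{i,1}$ is supported on $\alpha_i + d\Z_+$.

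The main obstacle — really the only subtle point — is making sure the $\alpha_i$ produced here genuinely agree with those from Proposition \ref{per}, and that the reduction to "$\gamma$-weight above the first generation of type $1$ is zero" is handled correctly including the edge case of whether $\zeta^{(1)}(\emptyset) > 0$; if not, one must verify the argument of Proposition \ref{per} still goes through choosing $j$ to be a type that can die childless and relating its $\alpha_j$ to $\alpha_1$ via irreducibility. Everything else is a direct transcription of the Proposition \ref{per} proof with the weight vector $\gamma = (1,0,\ldots,0)$, which is why the corollary is left to the reader.
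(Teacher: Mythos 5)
Your overall strategy---rerunning the proof of Proposition \ref{per} with $j=1$ and $\gamma=\mathbbm{1}_{\cdot=1}$---is the natural one (the paper leaves the proof to the reader, so there is no official argument to compare against beyond Proposition \ref{per} itself), but two points need repair. The edge case you flag and leave unverified, namely whether $\zeta^{(1)}(\emptyset)>0$, is in fact a non-issue once you observe that with this $\gamma$ the class-2 trees of the decomposition (root of type $1$, no other vertex of type $1$) have $a_x=0$ and therefore contribute $\{1\}$ to $G$ exactly as the single-vertex tree would; such trees exist because $\mu_{1,1}(0)>0$, which is forced by criticality ($\mu_{1,1}$ has mean $1$ by Proposition \ref{moments}$(i)$ and cannot equal $\delta_1$, else the projected tree $\Pi^{(1)}(\mathbf{T})$ would be an infinite ray)---a fact the paper itself invokes in the proofs of Lemmas \ref{ratio} and \ref{reduce}. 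With that, the sumset identity gives $G\subset 1+d'\Z_+$ together with all large enough elements of $1+d'\Z_+$, where $d'=\gcd(\mathrm{supp}\,\mu_{1,1})$, and these two properties determine the pair $(d,\alpha_1)$ of Proposition \ref{per} uniquely, so $d=d'$ and $\alpha_1=1$.

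The second bullet is where your argument has a genuine gap: the assertion that ``$C_1(\mathbf{T})$ ranges over trees of $\TT^{(i)}_{\zeta}$ already cut at their first generation of type $1$'' is false in general. A cut tree is not an element of $\TT^{(i)}_{\zeta}$, because its type-$1$ leaves are unfinished (in the full tree they carry nontrivial subtrees); treating it as a full tree implicitly requires $\zeta^{(1)}(\emptyset)>0$, which is not assumed. The correct bridge goes through the first bullet: for a full tree $\mathbf{t}\in\TT^{(i)}_{\zeta}$ with $i\neq 1$ whose first generation of type $1$ has size $p$, the type-$1$ vertices of $\mathbf{t}$ are partitioned among the subtrees $\mathbf{t}_v$ rooted at that generation, so $\#_1\mathbf{t}=\sum_{v}\#_1\mathbf{t}_v\equiv p\cdot\alpha_1=p\pmod d$, while Proposition \ref{per} gives $\#_1\mathbf{t}\equiv\alpha_i\pmod d$; hence $p\equiv\alpha_i\pmod d$, and since every $p$ in the support of $\mu_{i,1}$ arises from some full tree, the second bullet follows. (Equivalently, graft at each type-$1$ leaf of the cut tree a copy of a finite tree containing a single type-$1$ vertex, which exists by $\mu_{1,1}(0)>0$, and apply Proposition \ref{per} to the resulting full tree.)
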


\section{Infinite multi-type Galton-Watson trees and forests}
In this section we will consider unlabelled trees and forests with a critical ordered offspring distribution $\z$, and will omit mentioning $\z$ for readability purposes. We could in fact work with spatial trees, however, since the labellings are done conditionally on the tree and in independent fashion for each vertex, the reader can check that the proofs do not change at all if we add the labellings in.

Just as in the case of critical monotype Galton-Watson trees, multi-type trees have an infinite variant which is obtained through a size-biasing method which was first introduced in \cite{KLPP}.

\subsection{Existence of the infinite forest}

\begin{prop}\label{definftree} Let $\mathbf{w}\in\W$. There exists a unique probability measure $\widehat{\pr}^{(\mathbf{w})}$ on the space of infinite $K$-type forests such that, for any $n\in\Z_{+}$ and for any finite $K$-type forest $\mathbf{f}$ with height $n$,
\begin{equation}\label{defbiased}
\widehat{\pr}^{(\mathbf{w})}\big(\mathbf{F}_{\leq n}=\mathbf{f}\big)=\frac{1}{Z_{\mathbf{w}}}\left(\sum_{u\in \mathbf{f}_n} b_{\mathbf{e}(u)}\right) \pr^{(\mathbf{w})}\big(\mathbf{F}_{\leq n}=\mathbf{f}\big),
\end{equation}
where the normalizing constant $Z_{\mathbf{w}}$ is equal to $\sum_{i=1}^{|\mathbf{w}|} b_{w_i}=p(\mathbf{w})\cdot\mathbf{b}.$
\end{prop}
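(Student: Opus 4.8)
The plan is to verify that the right-hand side of \eqref{defbiased} is a consistent family of probability measures on cylinder sets, and then invoke the Kolmogorov extension theorem to obtain $\widehat{\pr}^{(\mathbf w)}$. Uniqueness is automatic since the cylinder sets $\{\mathbf F_{\leq n}=\mathbf f\}$ generate the Borel $\sigma$-algebra on the space of (locally finite) $K$-type forests. So the content is entirely in (a) checking that, for each fixed $n$, \eqref{defbiased} defines a probability measure on forests of height exactly $n$ — equivalently, that the prescribed weights sum to $1$; and (b) checking consistency: if $\mathbf f$ has height $n$, then $\widehat{\pr}^{(\mathbf w)}(\mathbf F_{\leq n}=\mathbf f)$ equals the sum of $\widehat{\pr}^{(\mathbf w)}(\mathbf F_{\leq n+1}=\mathbf f')$ over all $\mathbf f'$ with $\mathbf f'_{\leq n}=\mathbf f$.

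For the summation-to-one in (a), I would write $\sum_{\mathbf f} \big(\sum_{u\in\mathbf f_n} b_{\mathbf e(u)}\big)\pr^{(\mathbf w)}(\mathbf F_{\leq n}=\mathbf f) = \E^{(\mathbf w)}\big[\sum_{u\in\mathbf F_n} b_{\mathbf E(u)}\big] = \E^{(\mathbf w)}[X_n]$ in the notation of \eqref{defmartingale} (applied to the forest, i.e.\ summed over the component trees). Since $(X_n)$ is a martingale — this is exactly the martingale property recorded after \eqref{defmartingale}, and the forest version is just a sum of independent tree martingales — we get $\E^{(\mathbf w)}[X_n]=\E^{(\mathbf w)}[X_0]=\sum_{i=1}^{|\mathbf w|} b_{w_i}=p(\mathbf w)\cdot\mathbf b=Z_{\mathbf w}$, which is precisely the claimed normalizing constant. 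Note this also uses that $X_n$ depends only on generation $n$, so $\E^{(\mathbf w)}[X_n]$ can legitimately be computed by the displayed sum over finite forests of height $\geq n$, truncated at level $n$; one should be slightly careful that forests of height $<n$ contribute $0$ to the sum, which is consistent with $\mathbf f_n=\emptyset$ there.

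For the consistency step (b), fix $\mathbf f$ of height $n$ and sum \eqref{defbiased} (at level $n+1$) over all one-step extensions $\mathbf f'$. Using the branching property, $\pr^{(\mathbf w)}(\mathbf F_{\leq n+1}=\mathbf f')$ factors as $\pr^{(\mathbf w)}(\mathbf F_{\leq n}=\mathbf f)$ times a product over the vertices $u\in\mathbf f_n$ of $\zeta^{(\mathbf e(u))}(\mathbf w_{\mathbf f'}(u))$, and the weight $\sum_{v\in\mathbf f'_{n+1}} b_{\mathbf e(v)}$ splits as $\sum_{u\in\mathbf f_n}\sum_{v \text{ a child of }u} b_{\mathbf e(v)}$. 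Summing over the choices of offspring-type-lists at each $u\in\mathbf f_n$ independently, the cross terms telescope: for a fixed $u$ one picks up $\sum_{\mathbf w'}\zeta^{(\mathbf e(u))}(\mathbf w')\big(\sum_{l} b_{w'_l}\big) = \sum_{\mathbf z}\mu^{(\mathbf e(u))}(\mathbf z)\,\mathbf b\cdot\mathbf z = b_{\mathbf e(u)}$ by the right-eigenvector identity $b_i=\sum_{\mathbf z}\mu^{(i)}(\mathbf z)\,\mathbf b\cdot\mathbf z$ displayed in the criticality subsection, while the other vertices contribute a total mass $1$. Collecting these, the sum over extensions becomes $\big(\sum_{u\in\mathbf f_n} b_{\mathbf e(u)}\big)\pr^{(\mathbf w)}(\mathbf F_{\leq n}=\mathbf f)$ divided by $Z_{\mathbf w}$, which is exactly $\widehat{\pr}^{(\mathbf w)}(\mathbf F_{\leq n}=\mathbf f)$.

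I expect the main (modest) obstacle to be purely bookkeeping: handling forests of height strictly less than $n$ cleanly, making sure the "height exactly $n$" versus "height $\leq n$" conventions in \eqref{defbiased} are used consistently, and phrasing the Kolmogorov/consistency argument correctly for the non-compact space of locally finite forests (one works on the Polish space with the local metric $d$ defined earlier, where cylinder sets based on $\mathbf F_{\leq n}$ are clopen, so the extension theorem applies without measurability subtleties). The genuinely mathematical inputs — the martingale property and the eigenvector identity for $\mathbf b$ — are both already in hand, so no new estimate is needed.
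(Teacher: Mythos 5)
Your proposal is correct and follows essentially the same route as the paper: normalization via the martingale $(X_n)$ from \eqref{defmartingale}, consistency via the right-eigenvector identity $\sum_{\mathbf z}\mu^{(i)}(\mathbf z)\,\mathbf b\cdot\mathbf z=b_i$, and then Kolmogorov extension. The only (cosmetic) difference is that the paper runs the consistency check on \emph{pointed} forests of height $n$ with a distinguished vertex $u\in\mathbf f_n$ weighted by $b_{\mathbf e(u)}$ — which has the side benefit of producing the distinguished infinite spine used later — whereas you sum out the marked vertex and work with the unpointed cylinder measures directly; both rest on exactly the same two identities.
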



\begin{proof} Our proof is structured as the one given in \cite{LyonsPeres} for monotype trees. Let $n\in\Z_{+}$, we will first define a probability distribution $\widehat{\pr}^{(\mathbf{w})}_n$ on the space of $K$-type forests with height exactly $n$ paired with a point of height $n$. Let $(\mathbf{f},\mathbf{e})$ be such a forest and $u\in \mathbf{f}_n$, and set
	\[\widehat{\pr}^{(\mathbf{w})}_n\big(\mathbf{f},u\big)=\frac{b_{\mathbf{e}(u)}}{Z_{\mathbf{w}}} \pr^{(\mathbf{w})}\big(\mathbf{F}=\mathbf{f}\big).
\]
The martingale property of the process $(X_n)_{n\in\Z_{+}}$ defined by \eqref{defmartingale} under $\pr^{(\mathbf{w})}$ ensures us that we do have probability measures: the total mass of $\widehat{\pr}^{(\mathbf{w})}_n$ is $\frac{1}{Z_{\mathbf{w}}} \mathbb{E}^{(\mathbf{w})}_{\z}[X_n]=\frac{p(\mathbf{w})\cdot\mathbf{b}}{Z_{\mathbf{w}}}=1$.

We will check that these are compatible in the sense that, for $n\in\Z_{+}$, if $(\mathbf{F},U)$ has distribution $\widehat{\pr}^{(\mathbf{w})}_{n+1}$ then $(\mathbf{F}_{\leq n},U^-)$ has distribution $\widehat{\pr}^{(\mathbf{w})}_n$. Fix therefore $\mathbf{f}$ a $K$-type forest of height $n$ and $u$ a vertex of $t$ at height $n$. We have

\begin{align*}
\widehat{\pr}^{(\mathbf{w})}_{n+1}\big((\mathbf{F}_{\leq n},U^-)=(\mathbf{f},u)\big)&= \frac{1}{Z_{\mathbf{w}}}\pr^{(\mathbf{w})}\big(\mathbf{F}_{\leq n}=\mathbf{f}\big) \sum_{\mathbf{x}\in\W_K} \zeta^{(\mathbf{e}(u))}(\mathbf{x}) \sum_{j=1}^{|\mathbf{x}|} b_{x_j}    \\
                                                &= \frac{1}{Z_{\mathbf{w}}}\pr^{(\mathbf{w})}\big(\mathbf{F}_{\leq n}=\mathbf{f}\big) \sum_{\mathbf{z}\in(\Z_+)^K} \mu^{(\mathbf{e}(u))}(\mathbf{z})\,\mathbf{z}\cdot\mathbf{b}  \\
                                                &=\frac{1}{Z_{\mathbf{w}}}\pr^{(\mathbf{w})}\big(\mathbf{F}_{\leq n}=\mathbf{f}\big) \, b_{\mathbf{e}(u)}.
\end{align*}

Kolmogorov's consistency theorem then allows us to define a distribution $\widehat{\pr}^{(\mathbf{w})}_{\infty}$ on the set of forests where one of the trees has a distinguished infinite path. Forgetting the infinite path then gives us the distribution $\widehat{\pr}^{(\mathbf{w})}$ which we were looking for.

\end{proof}

For $n\in\Z_+$, $\mathbf{f}$ a forest of height $n+1$ and $u\in \mathbf{f}_{n+1}$, we have
\begin{align*}
\widehat{\pr}^{(\mathbf{w})}_{n+1}\Big((\mathbf{F},U)=(\mathbf{f},u) & \;|\; (\mathbf{F}_{\leq n},U^-)=(\mathbf{f}_{\leq n},u^-)\Big) =  \\
& \frac{b_{\mathbf{e(u)}}}{b_{\mathbf{e(u^-)}}} \pr^{(\mathbf{w})}_{n+1}\Big(\mathbf{F}=\mathbf{f} \; | \; \mathbf{F}_{\leq n}=\mathbf{f}_{\leq n}\Big).
\end{align*}
From this formula follows a simple description of these infinite forests.

Given a type $i\in[K]$, a random tree with distribution $\widehat{\pr}^{(i)}$ can be described in the following way: it is made of a \emph{spine}, that is an infinite ascending chain starting at the root, on which we have grafted independent trees with ordered offspring distribution $\z$. Elements of the spine have a different offspring distribution, called $\widehat{\z}$, which is a size-biased version of $\z$. It is defined by 
\begin{equation}\label{defzetahat}
\widehat{\zeta}^{(j)}(\mathbf{x})=\frac{1}{b_j}\sum_{l=1}^{|\mathbf{x}|} b_{x_l}\zeta^{(j)}(\mathbf{x}),
\end{equation}
with $j\in[K]$ and $\mathbf{x}\in \W_K$.
Given an element of the spine $u\in\mathcal{U}$ and its offspring $\mathbf{x}\in\W_K$, the probability that the next element of the spine is $uj$ for $j\in[|\mathbf{x}|]$ is proportional to $b_{x_j}$, and therefore equal to
	\[\frac{b_{x_j}}{\sum_{l=1}^{|\mathbf{x}|} b_{x_l}}.
\]

To get a forest with distribution $\widehat{\pr}^{(\mathbf{w})}$, let first $J$ be a random variable taking values in $[|\mathbf{w}|]$ such that $J=j$ with probability proportional to $b_{w_j}$. Conditionally on $J$, let $\T_J$ be a tree with distribution $\widehat{\pr}^{(J)}$, and let $\mathbf{T}_i$, for $i\in[|\mathbf{w}|]$, $i\neq J$ be a tree with distribution $\pr^{(i)}$, all these trees being mutually independent. Then the forest $(\mathbf{T}_i)_{i\in [|\mathbf{w}|]}$ has distribution $\widehat{\pr}^{(\mathbf{w})}$.

\begin{rem} Recall that a tree with law $\pr^{(i)}$ is finite for any $i\in[K]$. Therefore, a forest with distribution $\widehat{\pr}^{(\mathbf{w})}$ can only have one infinite path, and thus we do not lose any information by going from $\widehat{\pr}^{(\mathbf{w})}_{\infty}$ to $\widehat{\pr}^{(\mathbf{w})}$.
\end{rem}

\subsection{Convergence to the infinite forest}
Recall from Section \ref{sec:per} the notations $d$ and $\alpha_{\mathbf{w}}$: the size of a forest with distribution $\pr^{(\mathbf{w})}_{\zeta}$ is always of the form $\alpha_{\mathbf{w}}+dn$.
\begin{theo}\label{cvforests}
Assume one of the following:
\begin{itemize}
\item $\gamma_j=\mathbbm{1}_{j=1}$ for $j\in[K]$.
\item $\zeta$ is regular critical.
\end{itemize}

As $n$ tends to infinity, a forest $\mathbf{F}$ with distribution $\pr^{(\mathbf{w})}$, conditioned on $|\mathbf{F}|_{\gamma}=\alpha_{\mathbf{w}}+dn$, converges in distribution to a forest with distribution $\widehat{\pr}^{(\mathbf{w})}$. In other words, given a forest $(\mathbf{f},\mathbf{e})$ of height $k$, we have
	\[\pr^{(\mathbf{w})}\big(\mathbf{F}_{\leq k}=\mathbf{f} \mid |\mathbf{F}|_{\gamma}=\alpha_{\mathbf{w}}+dn\big) \underset{n\to\infty} \longrightarrow \widehat{\pr}^{(\mathbf{w})}\big(\mathbf{F}_{\leq k}=\mathbf{f}\big)
\]
\end{theo}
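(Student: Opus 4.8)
The plan is to reduce everything to a computation with the explicit formula \eqref{defbiased} defining $\widehat{\pr}^{(\mathbf{w})}$, using the branching property to split a forest into its part below height $k$ and the subforest hanging at height $k$, and then applying a local central limit / renewal-type estimate for $|\mathbf{F}|_{\gamma}$. Concretely, fix a finite forest $(\mathbf{f},\mathbf{e})$ of height $k$. By the branching property, conditionally on $\mathbf{F}_{\leq k}=\mathbf{f}$, the forest $\mathbf{F}$ is obtained by grafting at each $u\in\mathbf{f}_k$ an independent tree with law $\pr^{(\mathbf{e}(u))}$; writing $\mathbf{w}_k(\mathbf{f})$ for the word listing the types of the vertices of $\mathbf{f}_k$ (in some fixed order), the conditional law of $|\mathbf{F}|_{\gamma}-|\mathbf{f}_{<k}|_{\gamma}$ is that of $|\mathbf{G}|_{\gamma}$ for $\mathbf{G}$ with law $\pr^{(\mathbf{w}_k(\mathbf{f}))}$. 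Hence
\[
\pr^{(\mathbf{w})}\big(\mathbf{F}_{\leq k}=\mathbf{f},\ |\mathbf{F}|_{\gamma}=\alpha_{\mathbf{w}}+dn\big)
=\pr^{(\mathbf{w})}\big(\mathbf{F}_{\leq k}=\mathbf{f}\big)\;\pr^{(\mathbf{w}_k(\mathbf{f}))}\big(|\mathbf{G}|_{\gamma}=\alpha_{\mathbf{w}}+dn-c_{\mathbf{f}}\big),
\]
where $c_{\mathbf{f}}=|\mathbf{f}_{<k}|_{\gamma}$ is a constant depending only on $\mathbf{f}$. The same identity with $\mathbf{f}$ replaced by a one-type forest gives the denominator $\pr^{(\mathbf{w})}\big(|\mathbf{F}|_{\gamma}=\alpha_{\mathbf{w}}+dn\big)$. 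So the conditional probability in the statement equals
\[
\pr^{(\mathbf{w})}\big(\mathbf{F}_{\leq k}=\mathbf{f}\big)\cdot
\frac{\pr^{(\mathbf{w}_k(\mathbf{f}))}\big(|\mathbf{G}|_{\gamma}=\alpha_{\mathbf{w}}+dn-c_{\mathbf{f}}\big)}{\pr^{(\mathbf{w})}\big(|\mathbf{F}|_{\gamma}=\alpha_{\mathbf{w}}+dn\big)},
\]
and the whole problem is to show this ratio of probabilities converges to $\big(\sum_{u\in\mathbf{f}_k} b_{\mathbf{e}(u)}\big)/Z_{\mathbf{w}}=p(\mathbf{w}_k(\mathbf{f}))\cdot\mathbf{b}/(p(\mathbf{w})\cdot\mathbf{b})$, which by \eqref{defbiased} is exactly $\widehat{\pr}^{(\mathbf{w})}\big(\mathbf{F}_{\leq k}=\mathbf{f}\big)/\pr^{(\mathbf{w})}\big(\mathbf{F}_{\leq k}=\mathbf{f}\big)$.

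The core estimate I would establish is therefore: for any two words $\mathbf{w},\mathbf{w}'\in\W_K$ and any integer $c$ with $c\equiv\alpha_{\mathbf{w}}-\alpha_{\mathbf{w}'}\pmod d$,
\[
\frac{\pr^{(\mathbf{w}')}\big(|\mathbf{G}|_{\gamma}=\alpha_{\mathbf{w}}+dn-c\big)}{\pr^{(\mathbf{w})}\big(|\mathbf{F}|_{\gamma}=\alpha_{\mathbf{w}}+dn\big)}\xrightarrow[n\to\infty]{}\frac{p(\mathbf{w}')\cdot\mathbf{b}}{p(\mathbf{w})\cdot\mathbf{b}}.
\]
To prove this I would pass through the ``skeleton'' decomposition already used for Proposition \ref{per}: cut each tree of the forest at its successive generations of the reference type $j$ (chosen with $\zeta^{(j)}(\emptyset)>0$), so that $|\mathbf{F}|_{\gamma}$ is expressed as a (random, but with a controlled number of summands tied to the total $j$-population) sum of i.i.d.\ contributions governed by the measures $\mu_{i,j},\xi_{i,j}$ from Section \ref{gen1}. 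In the regular critical case, Proposition \ref{moments}(iv) gives finite exponential moments, so the relevant random walk has increments with finite variance and one gets a genuine local central limit theorem; in the one-type case $\gamma_j=\mathbbm 1_{j=1}$, the number of summands is itself the quantity being counted and the cleanest route is the classical local limit theorem for the number of vertices of a fixed type, for which I would invoke the Dwass-type / generating-function identity (equivalently, a hitting-time representation of $\{|\mathbf{F}|_{\gamma}=m\}$ for a left-continuous random walk). Either way, the asymptotics of $\pr^{(\mathbf{w})}(|\mathbf{F}|_{\gamma}=\alpha_{\mathbf{w}}+dn)$ are of the form $C\,(p(\mathbf{w})\cdot\mathbf{b})\,n^{-3/2}$ (up to the periodicity restriction), with the \emph{same} constant $C$ for every starting word, the prefactor $p(\mathbf{w})\cdot\mathbf{b}$ coming precisely from the fact that a word of $j$-size $N$ behaves, at leading order, like a single tree but with $N$ chances to be large; the shift by $c$ and the finite multiplicative factor disappear in the ratio. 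Dividing, the $n^{-3/2}$ and $C$ cancel and one is left with $p(\mathbf{w}')\cdot\mathbf{b}/(p(\mathbf{w})\cdot\mathbf{b})$, as desired. Finally I would upgrade the pointwise convergence of $\pr^{(\mathbf{w})}\big(\mathbf{F}_{\leq k}=\mathbf{f}\mid |\mathbf{F}|_{\gamma}=\alpha_{\mathbf{w}}+dn\big)$ to convergence in distribution by noting, as in the characterization recalled in the text, that these limits form a probability measure on forests of height $k$ — this is automatic because the limit is $\widehat{\pr}^{(\mathbf{w})}(\mathbf{F}_{\leq k}=\mathbf{f})$ and $\widehat{\pr}^{(\mathbf{w})}$ is already a probability measure — so no tightness argument is needed.

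The main obstacle is the local limit estimate for $\pr^{(\mathbf{w})}(|\mathbf{F}|_{\gamma}=\alpha_{\mathbf{w}}+dn)$ with its exact prefactor, handled uniformly over starting words and with the correct treatment of the period $d$ and the residues $\alpha_{\mathbf{w}}$; in particular one must check that the lattice span of the increment distribution in the skeleton walk is exactly $d$ (this is what the gcd computation in the proof of Proposition \ref{per} secures) so that the local CLT is nondegenerate on $\alpha_{\mathbf{w}}+d\Z_+$, and one must verify that the number of ``spine candidates'' contributes linearly, which is where the martingale $X_n$ and the eigenvector $\mathbf{b}$ — hence the factor $p(\mathbf{w})\cdot\mathbf{b}$ — enter. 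The regular-critical hypothesis (or the one-type restriction) is exactly what makes the increments have the finite second moment needed for this; without it the tail could be heavier and the limiting object different.
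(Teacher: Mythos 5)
Your reduction of the theorem to the ratio estimate
\[
\frac{\pr^{(\mathbf{w}')}\big(|\mathbf{G}|_{\gamma}=\alpha_{\mathbf{w}}+dn-c\big)}{\pr^{(\mathbf{w})}\big(|\mathbf{F}|_{\gamma}=\alpha_{\mathbf{w}}+dn\big)}\longrightarrow\frac{p(\mathbf{w}')\cdot\mathbf{b}}{p(\mathbf{w})\cdot\mathbf{b}}
\]
is exactly the paper's first step (the statement $(H_{\mathbf{w}})$ and the short argument deducing the theorem from it), and your handling of the regular critical case is in the right spirit: the paper gets the $n^{-3/2}$ local estimate with prefactor $Z_{\mathbf{w}}$ via the smooth implicit-function schema rather than a direct local CLT on a skeleton walk, but these are essentially interchangeable once exponential moments are available.

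The gap is in the one-type case, and it is twofold. First, under the hypothesis $\gamma_j=\mathbbm{1}_{j=1}$ the only assumption is criticality; $\mu_{1,1}$ need not have finite variance, so there is no ``genuine local central limit theorem'' and the decay of $\pr^{(1)}(\#_1\mathbf{T}=1+dn)$ need not be $n^{-3/2}$ with a universal constant. The paper circumvents this by never computing the rate at all: it combines the cyclic (Dwass) identity with Spitzer's strong ratio theorem for recurrent aperiodic walks, which yields $\pr(S_{k+dn}=-k)\sim\pr(S_{1+dn}=-1)$ with no moment hypothesis. Your proposal, as written, relies on an $n^{-3/2}$ asymptotic that is unavailable here. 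Second, and more seriously, the cyclic lemma only handles words all of whose letters equal $1$. For a general word, your skeleton decomposition plus the one-big-jump heuristic delivers only the \emph{lower} bound on the liminf of the ratio (this is the Fatou argument of Section 4.2.3 of the paper); the matching upper bound is the real difficulty, and your proposal offers no mechanism for it. The paper's solution is the extension-relation bootstrap of Sections 4.2.4--4.2.6: it embeds the forests in question inside a single tree rooted at type $1$, partitions the event $\{\#_1\mathbf{T}=1+dn\}$ over a countable family of mutually exclusive extensions, and observes that since the lower bounds already sum to $1$ (by Lemma \ref{extensionbiased}), each liminf must in fact be a limit. Some argument of this conservation-of-mass type (or an a priori upper bound from another source) is indispensable, and its absence is what keeps the proposal from being a proof in the first case.
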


This theorem is split into two quite distinct parts. For the first part, we assume that the notion of size of a tree we take is simply the amount of vertices of one fixed type, which we can take as $1$ by symmetry. In this case, the theorem will be proved with purely combinatorial tools, notably ratio limit theorems for random walks. In the second part, we do not make any assumptions on $\gamma$, and in exchange for that we have to restrict ourselves to the case where the offspring distribution has exponential moments. The result will then be proved with the help of techniques from analytic combinatorics.

\section{Proof of Theorem \ref{cvforests}}
\subsection{The main ingredient}
Whether we count only one type of vertex or the offspring distribution is regular critical, the proof of Theorem \ref{cvforests} will rely on the following asymptotic equivalence, indexed by any word $\mathbf{w}\in\W_K$:

\begin{equation}\tag{$H_{\mathbf{w}}$}\label{byword}
\pr^{(\mathbf{w})}(|\mathbf{F}|_{\gamma}=\alpha_{\mathbf{w}}+dn) \underset{n\to\infty}{\sim} \frac{Z_{\mathbf{w}}}{b_1} \pr^{(1)} \big(|\mathbf{T}|_{\gamma}=\alpha_1+d(n+p)\big), \qquad \forall p\in\Z
\end{equation}

What Equation $\eqref{byword}$ means is that, when we ask for a forest to have size of order $dn$ with large $n$, then exactly one of its tree components will have size or order $dn$, while the others will be comparatively microscopic.

\medskip

\noindent\textbf{Proof that Theorem \ref{cvforests} follows from \eqref{byword}:} take a $K$-type forest $\mathbf{f}$ with height $k\in\N$, and let $\mathbf{x}\in\W_K$ be the word obtained by taking the types of the vertices of $\mathbf{f}$ with height $k$ (the order of the elements $\mathbf{x}$ actually has no influence). For $n$ large enough, we have
\begin{align*}\pr^{(\mathbf{w})}\big(\mathbf{F}_{\leq k}=\mathbf{f} \mid |\mathbf{F}|_{\gamma}=\alpha_{\mathbf{w}}+dn\big)
	&=\frac{\pr^{(\mathbf{w})}\big(\mathbf{F}_{\leq k}=\mathbf{f},|\mathbf{F}|_{\gamma}=\alpha_{\mathbf{w}}+dn \big)}{\pr^{(\mathbf{w})}(|\mathbf{F}|_{\gamma}=\alpha_{\mathbf{w}}+dn)} \\
	&=\pr^{(\mathbf{w})}\big(\mathbf{F}_{\leq k}=\mathbf{f}\big)\frac{\pr^{(\mathbf{x})}(|\mathbf{F}|_{\gamma}=\alpha_{\mathbf{w}}+dn-q)}{\pr^{(\mathbf{w})}(|\mathbf{F}|_{\gamma}=\alpha_{\mathbf{w}}+dn)}
\end{align*}
where $q=|\mathbf{f}_{\leq k-1}|_{\gamma}$. By the results of Section \ref{sec:per}, if $\pr^{(\mathbf{w})}\big(\mathbf{F}_{\leq k}=\mathbf{f}\big)>0$ then $\alpha_{\mathbf{w}}-q$ must be congruent to $\alpha_{\mathbf{x}}$ modulo $d$, giving us
\begin{align*}
\pr^{(\mathbf{w})}\big(\mathbf{F}_{\leq k}=\mathbf{f} \mid |\mathbf{F}|_{\gamma}= \alpha_{\mathbf{w}}+dn\big)  
     = 	\pr^{(\mathbf{w})}\big(\mathbf{F}_{\leq k}=\mathbf{f}\big)\frac{\pr^{(\mathbf{x})}\big(|\mathbf{F}|_{\gamma}=\alpha_{\mathbf{x}}+d(n+p)\big)}{\pr^{(\mathbf{w})}\big(|\mathbf{F}|_{\gamma}=\alpha_{\mathbf{w}}+dn\big)}
\end{align*}
for some signed integer p. Now if we let $n$ tend to infinity, using both \eqref{byword} and $(H_{\mathbf{x}})$, we obtain
	\[\frac{\pr^{(\mathbf{x})}\big(|\mathbf{F}|_{\gamma}=\alpha_{\mathbf{x}}+d(n+p)\big)}{\pr^{(\mathbf{w})}\big(|\mathbf{F}|_{\gamma}=\alpha_{\mathbf{w}}+dn\big)}
	\underset{n\to\infty}{\longrightarrow}\frac{Z_{\mathbf{x}}}{Z_{\mathbf{w}}}=\frac{1}{Z_{\mathbf{w}}}\left(\sum_{u\in \mathbf{f}_k} b_{\mathbf{e}(u)}\right),
\]
which concludes the proof of Theorem \ref{cvforests}, assuming $\eqref{byword}$.
\qed

\subsection{Proving \eqref{byword} when counting only one type}
We assume from now on that $\gamma_j=\mathbbm{1}_{j=1}$ for all $j\in[K]$, and will therefore from now on write $\#_1 \mathbf{T}$ for $|\mathbf{T}|_{\gamma}$. Recall from Section \ref{sec:per} in particular that $d$ is the gcd of the support of $\mu_{1,1}$ and that $\alpha_1=1$.

Obtaining \eqref{byword} for every word $\mathbf{w}$ will be done in several small steps. We will first prove it for some fairly simple words and gradually enlarge the class of $\mathbf{w}$ for which it holds, until we have every element of $\W_K$.
\subsubsection{Ratio limit theorems for a random walk}
\label{sec:ratio}
Let $(S_n)_{n\in\N}$ be a random walk which starts at $0$ and whose jumps are all greater than or equal to $-1$, their distribution being given by $\pr(S_1=k)=\mu_{1,1}(k+1)$ for $k\geq -1$.

\begin{lemma}\label{ratio} For all $\alpha\in \{0,\ldots,d-1\}$ we have
	\[\pr(S_{\alpha+dn}=-\alpha) \underset{n\to\infty}{\sim} \pr(S_{dn}=0) \underset{n\to\infty}{\sim} \pr(S_{d(n+1)}=0)
\]
\end{lemma}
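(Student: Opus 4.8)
The plan is to derive this from a local central limit theorem for the lattice random walk $(S_n)$. First I would note that the walk $(S_n)$ has mean zero: indeed $\E[S_1] = \sum_{k\ge -1} k\,\mu_{1,1}(k+1) = \sum_{m\ge 0}(m-1)\mu_{1,1}(m) = (\sum m\,\mu_{1,1}(m)) - 1 = \frac{b_1}{b_1} - 1 = 0$ by Proposition \ref{moments}(i). The step distribution is supported (after the shift by $-1$) on $-1 + d\Z$, and $d$ is exactly the span of this lattice since $d = \gcd(\operatorname{supp}\mu_{1,1})$; in particular $S_n$ lives on $-n + d\Z$, so the events $\{S_{\alpha+dn} = -\alpha\}$ and $\{S_{dn}=0\}$ are the natural ``return to the lattice origin'' events and are the ones carrying the bulk of the mass.

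The main point is whether the walk has finite variance, which is not assumed in Lemma \ref{ratio}. Under the hypothesis $\gamma_j = \mathbbm{1}_{j=1}$ we are in the purely combinatorial regime and do \emph{not} know $\mu_{1,1}$ has a second moment. So rather than invoke a classical LCLT with a Gaussian profile, the right tool is the Stone/Kesten-style ratio limit theorem for centered lattice random walks: for any mean-zero, aperiodic-on-its-lattice random walk, one has $\pr(S_n = x)/\pr(S_n = y) \to 1$ as $n\to\infty$ for $x,y$ in the lattice, and more relevantly $\pr(S_{n+1}=0)/\pr(S_n=0)\to 1$ along the appropriate sublattice of times. Concretely I would reduce to a genuinely aperiodic walk by passing to $S' = S/d$ appropriately (working on the lattice $\Z$ after rescaling, with the time-lattice subtlety that the walk only returns to a fixed residue every $d$ steps), and then apply the ratio limit theorem of Kesten--Spitzer / the renewal-type argument (e.g. as in Spitzer's book, or the strong ratio limit property for recurrent random walks) to conclude both $\pr(S_{\alpha+dn}=-\alpha)\sim \pr(S_{dn}=0)$ and $\pr(S_{dn}=0)\sim\pr(S_{d(n+1)}=0)$.

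For the first equivalence $\pr(S_{\alpha+dn}=-\alpha)\sim\pr(S_{dn}=0)$, the cleanest route is a one-step decomposition combined with the ratio limit property: condition on the first $\alpha$ steps, write $\pr(S_{\alpha+dn}=-\alpha) = \sum_{k} \pr(S_\alpha = k)\,\pr(S_{dn} = -\alpha - k)$ (using independence of increments), and then use that $\pr(S_{dn} = -\alpha-k)/\pr(S_{dn}=0)\to 1$ uniformly enough over the relevant $k$ (those reachable in $\alpha$ steps, a set whose mass is controlled) together with $\sum_k \pr(S_\alpha = k) = 1$; a dominated-convergence / uniform-integrability argument over the tail of $S_\alpha$ closes it. The second equivalence $\pr(S_{dn}=0)\sim\pr(S_{d(n+1)}=0)$ is exactly the strong ratio limit property applied along the arithmetic progression of times $d\Z$.

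The hard part will be justifying the ratio limit statements \emph{without} a second-moment assumption: one cannot just quote the finite-variance LCLT. I expect to need either (a) the general strong ratio limit theorem for aperiodic recurrent random walks (the walk here is recurrent since it is mean-zero on $\Z$), which gives $\pr(S_{n+1}=0)/\pr(S_n=0)\to 1$ without moment conditions, or (b) if the walk were transient-looking one would invoke Stone's local limit theorem in the domain of attraction of a stable law — but here criticality forces mean zero hence recurrence, so route (a) applies. Care is also needed with the periodicity: the walk's increments lie in $-1 + d\Z$, so $\{S_n = 0\}$ is only possible when $n \equiv 0 \pmod d$, and all ratio statements must be read along the correct residue classes; getting these congruences exactly right (and matching them to the $\alpha_i$, $d$ bookkeeping of Section \ref{sec:per}) is the fiddly but essential part. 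Everything else is a routine conditioning-on-the-first-few-steps argument.
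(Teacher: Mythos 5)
Your overall strategy matches the paper's: reduce to the rescaled walk $(S_{dn}/d)_{n}$, check it is integer-valued, irreducible (using $\mu_{1,1}(0)>0$), centered (hence recurrent by Chung--Fuchs) and aperiodic, and invoke Spitzer's strong ratio limit theorem to get $\pr(S_{dn}=0)\sim\pr(S_{d(n+1)}=0)\sim\pr(S_{dn}=dk)$; you correctly identify that the absence of a second-moment hypothesis rules out the Gaussian LCLT and forces this route, and your convolution decomposition $\pr(S_{\alpha+dn}=-\alpha)=\sum_k\pr(S_\alpha=k)\pr(S_{dn}=-\alpha-k)$ is the same as the paper's (up to decomposing on the first rather than the last $\alpha$ steps).

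The one genuine gap is in how you close the first equivalence. You propose to interchange the limit $\pr(S_{dn}=-\alpha-k)/\pr(S_{dn}=0)\to 1$ with the sum over $k$ by ``dominated convergence / uniform integrability over the tail of $S_\alpha$,'' but you never produce the dominating function. What is needed is a bound of the form $\sup_n \pr(S_{dn}=-\alpha-k)/\pr(S_{dn}=0)\le C_k$ with $\sum_k \pr(S_\alpha=k)\,C_k<\infty$, and for a recurrent walk with no moment assumptions beyond the first, such uniform control of $\sup_x\pr(S_{dn}=x)/\pr(S_{dn}=0)$ is not available off the shelf --- it is essentially of the same depth as the ratio limit theorem itself. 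The paper avoids this entirely by applying Fatou \emph{twice}, in a sandwich: Fatou applied to your decomposition gives only $\liminf_n \pr(S_{\alpha+dn}=-\alpha)/\pr(S_{dn}=0)\ge \sum_p \mu_{1,1}^{*\alpha}(p+\alpha)=1$; then the symmetric decomposition of $S_{d(n+1)}$ over the $d-\alpha$ steps following time $\alpha+dn$ gives $\liminf_n \pr(S_{d(n+1)}=0)/\pr(S_{\alpha+dn}=-\alpha)\ge 1$; multiplying the two and using the already-established $\pr(S_{dn}=0)\sim\pr(S_{d(n+1)}=0)$ forces both ratios to converge to $1$. You should replace your dominated-convergence step by this two-sided Fatou argument (or else actually prove the required domination); everything else in your outline is sound.
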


\begin{proof} The first thing to notice is that the random walk $(\frac{S_{dn}}{d})_{n\in\N}$ is irreducible, recurrent and aperiodic on $\Z$. First, it is indeed integer-valued because, by definition, for every $n$, $S_{n+1}\equiv S_n-1\pmod d$, and thus we stay in the same class modulo $d$ if we take $d$ steps at a time. Irreducibility comes from the fact that steps of $(S_n)_{n\in\N}$ has a nonzero probability of being equal to $-1$ because $\mu_{j,j}(0)>0$, and thus $(\frac{S_{dn}}{d})_{n\in\N}$ can have positive jumps or jumps equal to $-1$. Since the jumps of $(S_n)_{n\in\N}$ are centered by Proposition \ref{moments}, point $(i)$ this makes $(\frac{S_{dn}}{d})_{n\in\N}$ an irreducible and centered random walk on $\Z$, so that it is recurrent (see for example Theorem 8.2 in \cite{Kallenberg}). Finally, aperiodicity is obtained from the fact that, if $\mu_{j,j}(n)>0$, then $\pr(S_n=0)>0$ by jumping straight to $n-1$ and going down to $0$ one step at a time.

As a consequence of this, we can apply Spitzer's strong ratio theorem (see \cite{Spitzer}, p.49) to the random walk $(\frac{S_{dn}}{d})_{n\in\N}$. We obtain that, for any $k\in\Z$,
	\[\pr(S_{dn}=0) \underset{n\to\infty}\sim\pr(S_{d(n+1)}=0) \underset{n\to\infty}\sim \pr(S_{dn}=dk).
\]
This proves the second half of Lemma \ref{ratio}, and can also be used to prove the first half. Let $\mu_{j,j}^{* \alpha}$ be the distribution of the sum of $\alpha$ independent variables with distribution $\mu_{j,j}$. For $n\in\N$, we then have
	\[\pr(S_{\alpha+dn}=-\alpha)=\sum_{p\in\Z}\pr(S_{dn}=-\alpha-p)\mu^{* \alpha}_{j,j} (p+\alpha).
\]
Fatou's lemma then gives us
	\[\underset{n\to\infty}\liminf \ \frac{\pr(S_{\alpha+dn}=-\alpha)}{\pr(S_{dn}=0)}\geq \sum_{p\in\Z}\mu^{* \alpha}_{j,j} (p+\alpha)=1
\]
A similar argument also shows that
	\[\underset{n\to\infty}\liminf \ \frac{\pr(S_{d(n+1)}=0)}{\pr(S_{\alpha+dn}=-\alpha)} \geq 1,
\]
and this ends the proof.
\end{proof}

\subsubsection{The case where $\mathbf{w}=(1,1,\ldots,1)$}\label{easycase}

Consider a tree $\T$ with distribution $\pr^{(1)}$. Consider then the reduced tree $\Pi^{(1)}(\T)$ where all the vertices with types different from $1$ have been erased but ancestral lines are kept (such that the father of a vertex of $\Pi^{(1)}(T)$ is its closest ancestor of type $1$ in $\mathbf{T}$). This tree is precisely studied in \cite{M08}, where it is shown that it is a monotype Galton-Watson tree, its offspring distribution naturally being $\mu_{1,1}$. As a result, the well-known \emph{cyclic lemma} (see \cite{PitmanStFl}, Sections 6.1 and 6.2) tells us that
  \[\pr^{(1)}(\#_1 \mathbf{T}=1+dn)=\frac{1}{1+dn}\pr(S_{1+dn}=-1).
\]
where $(S_n)_{n\in\N}$ is the random walk defined in Section \ref{sec:ratio}. One particular consequence of this is the fact that, thanks to Lemma \ref{ratio}, in order to prove \eqref{byword} for a certain word $\mathbf{w}$, we can restrict ourselves to proving the asymptotic equivalence for a single value of $p$, which will we take to be $0$.

Consider now a word $\mathbf{w}=(1,1,\ldots,1)$ of length $k$, where $k$ is any integer. The cyclic lemma can be adapted to forests (see \cite{PitmanStFl} again), and we have
	\[\pr^{(\mathbf{w})}(\#_1 \mathbf{F}=k+dn)=\frac{k}{k+dn}\pr(S_{k+dn}=-k),
\]
Lemma \ref{ratio} then implies \eqref{byword} in this case since $Z_{\mathbf{w}}=kb_1$ and $\alpha_{\mathbf{w}}=k$.

The cases where $\mathbf{w}$ contains types different from $1$ will be much less simple, and we first start with an inequality.

\subsubsection{A lower bound for general $\mathbf{w}$}\label{lowerbound}

Let $\mathbf{w}\in\W_K$. In order to count the number of vertices of type $1$ of a forest with distribution $\pr^{(\mathbf{w})}$, we cut it at its first generation of type $1$. 
	\[\pr^{(\mathbf{w})}(\#_1\mathbf{F}=\alpha_{\mathbf{w}}+dn)=\sum_{i=1}^{|\mathbf{w}|}\sum_{k_i=0}^{\infty}\mu_{{w_i},1}(k_i)\pr^{(1,\ldots,1)}(\#_1\mathbf{F}=\alpha_{\mathbf{w}}-q+dn)
\]
where $q$ is the number of times $1$ appears in $\mathbf{w}$ and $1$ is repeated $k_1+k_2,\ldots+k_{|\mathbf{w}|}$ times in $\pr^{(1,\ldots,1)}$. By Corollary \ref{per1type}, whenever $\mu_{{w_i},1}(k_i)>0$, we have $\alpha_{w_i}\equiv k_i-\mathbbm{1}_{w_i=1} \pmod d$, and thus the use of $H_{(1,\ldots,1)}$, combined with Fatou's lemma, gives us the following lower bound:
	\[\underset{n\to\infty}\liminf\,\frac{\pr^{(\mathbf{w})}(\#_1\mathbf{F}=\alpha_{\mathbf{w}}+dn)}{\pr^{(1)} (\#_1\mathbf{T}=1+dn)} \geq \sum_{i=1}^{|\mathbf{w}|}\sum_{k_i}k_i\mu_{w_{k_i},1}(k_i).
\]
We can then use point $(i)$ of Proposition \ref{moments} to identify the right-hand side and obtain

\begin{equation}\label{eq:lowerbound}
\underset{n\to\infty}\liminf\,\frac{\pr^{(\mathbf{w})}(\#_1\mathbf{F}=\alpha_{\mathbf{w}}+dn)}{\pr^{(1)} (\#_1\mathbf{T}=1+dn)} \geq \frac{Z_\mathbf{w}}{b_1}.
\end{equation}

To prove the reverse inequality for the limsup, we will try to fit a forest with distribution $\pr^{(\mathbf{w})}$ ``inside" a tree with distribution $\pr^{(1)}$. We first need some additional notions.

\subsubsection{The extension relation}
We describe here a tool which will be useful in the future. Let $(\mathbf{t},\mathbf{e})$ and $(\mathbf{t'},\mathbf{e'})$ be two $K$-type trees. We say that $\mathbf{t}'$ \emph{extends} $\mathbf{t}$, which we write $\mathbf{t}'\vdash\mathbf{t}$ (omitting as usual the type functions for clarity) if $\mathbf{t}'$ can be obtained from $\mathbf{t}$ by grafting trees on the leaves of $\mathbf{t}'$. More precisely, $\mathbf{t}'\vdash\mathbf{t}$ if:
\begin{itemize}
\item $\mathbf{t}\subset\mathbf{t}'$.
\item $\forall u\in\mathbf{t}$, $\mathbf{e}(u)=\mathbf{e'}(u)$.
\item $\forall u\in\mathbf{t}'\setminus\mathbf{t},\exists v\in\partial\mathbf{t},w\in\mathcal{U}: \;u=vw$.
\end{itemize}
Here, $\partial\mathbf{t}$ is the set of leaves of $\mathbf{t}$, that is the set of vertices $v$ of $\mathbf{t}$ such that $k_v(\mathbf{t})=0$.

\begin{figure}[ht]
\centering
\begingroup%
  \makeatletter%
  \providecommand\color[2][]{%
    \errmessage{(Inkscape) Color is used for the text in Inkscape, but the package 'color.sty' is not loaded}%
    \renewcommand\color[2][]{}%
  }%
  \providecommand\transparent[1]{%
    \errmessage{(Inkscape) Transparency is used (non-zero) for the text in Inkscape, but the package 'transparent.sty' is not loaded}%
    \renewcommand\transparent[1]{}%
  }%
  \providecommand\rotatebox[2]{#2}%
  \ifx\svgwidth\undefined%
    \setlength{\unitlength}{194.83195893bp}%
    \ifx\svgscale\undefined%
      \relax%
    \else%
      \setlength{\unitlength}{\unitlength * \real{\svgscale}}%
    \fi%
  \else%
    \setlength{\unitlength}{\svgwidth}%
  \fi%
  \global\let\svgwidth\undefined%
  \global\let\svgscale\undefined%
  \makeatother%
  \begin{picture}(1,0.96487871)%
    \put(0,0){\includegraphics[width=\unitlength]{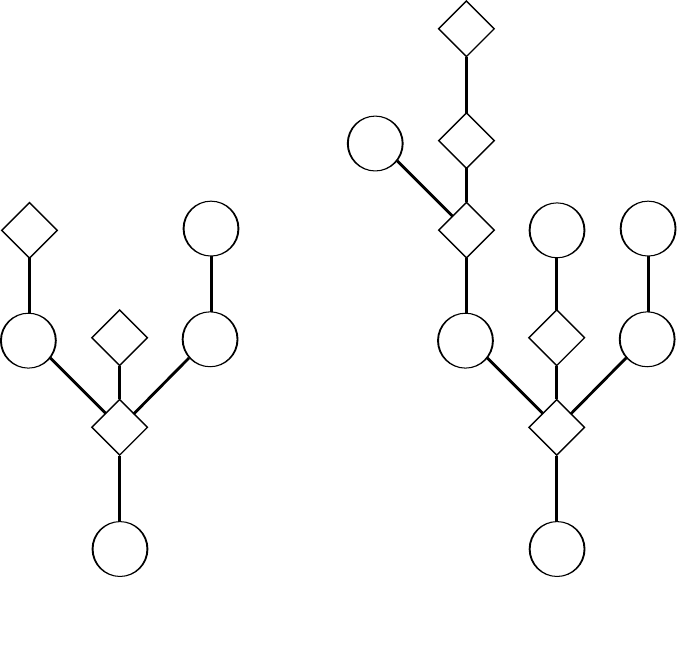}}%
    \put(0.1559672,0.00518476){\color[rgb]{0,0,0}\makebox(0,0)[lb]{\smash{$\mathbf{t}$}}}%
    \put(0.80779869,0.00518488){\color[rgb]{0,0,0}\makebox(0,0)[lb]{\smash{$\mathbf{t'}$}}}%
  \end{picture}%
\endgroup%
\caption{An example of a $2$-type tree extending another. Here, $\mathbf{t}'\vdash\mathbf{t}$.}
\end{figure}
This is once again adaptable to forests: if $(\mathbf{f},\mathbf{e})$ and $(\mathbf{f'},\mathbf{e'})$ are two $k$-type forests, then we say that $\mathbf{f}'\vdash\mathbf{f}$ if they have the same number of tree components and each tree of $\mathbf{f}'$ extends the corresponding tree of $\mathbf{f}$.

The extension relation behaves well with Galton-Watson random forests. For example, the following is immediate from the branching property:
\begin{lemma} If $(\mathbf{f,e})$ is a finite forest and $\mathbf{w}$ the list of types of the roots of its components, then
	\[\pr^{(\mathbf{w})}(\mathbf{F}\vdash\mathbf{f})=\prod_{u\in\mathbf{f}\setminus\partial\mathbf{f}}\zeta^{(\mathbf{e}(u))}(\mathbf{w}_{\mathbf{f}}(u))
\]
Moreover, we have a generalization of the branching property: conditionally on $\mathbf{F}\vdash\mathbf{f}$, $\mathbf{F}$ is obtained by appending independent trees at the leaves of $\mathbf{f}$, and for every such leaf $v$, the tree grafted at $v$ has distribution $\pr^{(\mathbf{e}(v))}$.
\end{lemma}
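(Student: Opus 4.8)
The plan is to reduce everything to a single tree and then induct on its height. Since a forest with law $\pr^{(\mathbf{w})}$ has independent tree components, since $\mathbf{F}\vdash\mathbf{f}$ means exactly that each component of $\mathbf{F}$ extends the corresponding component of $\mathbf{f}$, and since $\mathbf{f}\setminus\partial\mathbf{f}$ is the disjoint union over components $\mathbf{t}$ of $\mathbf{t}\setminus\partial\mathbf{t}$, both the product formula and the conditional description for forests follow at once from the corresponding statements for trees. So it suffices to treat a single tree $\mathbf{t}$ with root of type $i$, under $\pr^{(i)}$.

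First I would record the elementary reformulation of the extension relation: $\mathbf{t}'\vdash\mathbf{t}$ if and only if $\mathbf{e}$ and $\mathbf{e}'$ agree at the root and $\mathbf{w}_{\mathbf{t}'}(u)=\mathbf{w}_{\mathbf{t}}(u)$ for every $u\in\mathbf{t}\setminus\partial\mathbf{t}$. Indeed, as noted at the start of Section~\ref{chap4:basicdef}, the type of the root together with all the offspring type-lists $\mathbf{w}_{\mathbf{t}}(u)$ of non-leaf vertices determines $(\mathbf{t},\mathbf{e})$ completely, and the only extra vertices allowed in $\mathbf{t}'$ are descendants of leaves of $\mathbf{t}$; conversely these conditions manifestly produce an extension. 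In particular $\{\mathbf{T}\vdash\mathbf{t}\}$ depends only on the words $\mathbf{w}_{\mathbf{T}}(u)$ for $u$ a non-leaf vertex of $\mathbf{t}$.

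Next I would prove the product formula by induction on $ht(\mathbf{t})$. If $\mathbf{t}$ is a single vertex then $\mathbf{t}\setminus\partial\mathbf{t}=\emptyset$, every tree with root of type $i$ extends $\mathbf{t}$, and (criticality being in force, so $\pr^{(i)}$ is a probability measure) $\pr^{(i)}(\mathbf{T}\vdash\mathbf{t})=1$, the empty product. Otherwise let $\mathbf{x}=\mathbf{w}_{\mathbf{t}}(\emptyset)$ be the nonempty offspring word of the root; by the reformulation above, $\{\mathbf{T}\vdash\mathbf{t}\}=\{\mathbf{w}_{\mathbf{T}}(\emptyset)=\mathbf{x}\}\cap\bigcap_{j=1}^{|\mathbf{x}|}\{\mathbf{T}_j\vdash\mathbf{t}_j\}$, and by the branching property, conditionally on $\mathbf{w}_{\mathbf{T}}(\emptyset)=\mathbf{x}$ the subtrees $\mathbf{T}_j$ are independent with laws $\pr^{(x_j)}$, whence
\[
\pr^{(i)}(\mathbf{T}\vdash\mathbf{t})=\zeta^{(i)}(\mathbf{x})\prod_{j=1}^{|\mathbf{x}|}\pr^{(x_j)}(\mathbf{T}_j\vdash\mathbf{t}_j).
\]
Each $\mathbf{t}_j$ has smaller height, so the induction hypothesis turns the $j$-th factor into the product of $\zeta^{(\mathbf{e}(u))}(\mathbf{w}_{\mathbf{t}}(u))$ over $u\in\mathbf{t}_j\setminus\partial\mathbf{t}_j$; combining with the root term $\zeta^{(i)}(\mathbf{x})$ and using that $\mathbf{t}\setminus\partial\mathbf{t}=\{\emptyset\}\cup\bigcup_j\{ju:u\in\mathbf{t}_j\setminus\partial\mathbf{t}_j\}$ gives the asserted identity.

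Finally, the conditional description follows from the same induction, carried along with the formula: conditionally on $\mathbf{w}_{\mathbf{T}}(\emptyset)=\mathbf{x}$ and on the independent events $\{\mathbf{T}_j\vdash\mathbf{t}_j\}$, the $\mathbf{T}_j$ remain independent, and by the induction hypothesis each $\mathbf{T}_j$ is $\mathbf{t}_j$ with independent trees of laws $\pr^{(\mathbf{e}(\cdot))}$ grafted at its leaves; since $\partial\mathbf{t}=\bigcup_j\{jv:v\in\partial\mathbf{t}_j\}$ when $\mathbf{t}$ is not a single vertex, this is exactly the claimed construction, and the forest case follows by independence of the components. I do not expect any genuine difficulty here: the only point requiring care is the purely notational bookkeeping of the reformulation of $\vdash$ and of the indexing of internal vertices and leaves through the recursion.
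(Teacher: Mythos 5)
Your argument is correct, and it is essentially the paper's approach written out in full: the paper states this lemma without proof, calling it ``immediate from the branching property'', and your induction on the height of a single tree component (after reformulating $\vdash$ in terms of the offspring words of non-leaf vertices and using independence of the forest components) is exactly the careful expansion of that remark. Nothing further is needed.
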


For infinite trees, we get a generalization of \eqref{defbiased}:
\begin{lemma}\label{extensionbiased} If $(\mathbf{f,e})$ is a finite forest, let $\mathbf{x}$ be the word formed by the types of the leaves of $\mathbf{f}$ in lexicographical order. We have
	\[\widehat\pr^{(\mathbf{w})}(\mathbf{F}\vdash\mathbf{f})=\frac{Z_{\mathbf{x}}}{Z_{\mathbf{w}}}\pr^{(\mathbf{w})}(\mathbf{F}\vdash\mathbf{f}).
\]
\end{lemma}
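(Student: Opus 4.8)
The plan is to mirror the proof of Proposition \ref{definftree}, which established \eqref{defbiased}, but now at the level of the extension relation rather than at the level of truncations $\mathbf{F}_{\leq n}$. First I would fix a finite forest $(\mathbf{f},\mathbf{e})$ whose root types form the word $\mathbf{w}$, and let $\mathbf{x}$ be the word of types of its leaves. The natural route is to decompose the event $\{\mathbf{F}\vdash\mathbf{f}\}$ along the height: writing $n$ for the height of $\mathbf{f}$, the event $\{\mathbf{F}_{\leq k}=\mathbf{f}_{\leq k}\text{ for }k\le n\}$ does not quite capture $\mathbf{F}\vdash\mathbf{f}$ since $\mathbf{f}$ need not be ``full'' up to height $n$, so instead I would work directly with the infinite-spine description of $\widehat\pr^{(\mathbf{w})}$ recalled just before the lemma.

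The key computation: under $\widehat\pr^{(\mathbf{w})}$, a forest is built by choosing a distinguished tree index $J$ with $\pr(J=j)\propto b_{w_j}$, giving $\mathbf{T}_J$ distribution $\widehat\pr^{(J)}$ (spine with size-biased offspring $\widehat\zeta$, ordinary $\pr$-trees grafted everywhere off the spine), and the remaining trees ordinary $\pr^{(w_i)}$-trees. The event $\mathbf{F}\vdash\mathbf{f}$ forces, on each non-leaf vertex $u$ of $\mathbf{f}$, that the offspring word is exactly $\mathbf{w}_{\mathbf{f}}(u)$; by the preceding lemma, $\pr^{(\mathbf{w})}(\mathbf{F}\vdash\mathbf{f})=\prod_{u\in\mathbf{f}\setminus\partial\mathbf{f}}\zeta^{(\mathbf{e}(u))}(\mathbf{w}_{\mathbf{f}}(u))$. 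For $\widehat\pr^{(\mathbf{w})}$, the same product appears except that along the (unique, finite-or-infinite) spine path through the interior of $\mathbf{f}$, each $\zeta$ is replaced by $\widehat\zeta$ and one picks up the spine-direction probability $b_{x_\ell}/\sum b_{x_m}$ at the step leaving $\mathbf{f}$; summing over which leaf the spine exits through, and over which root the spine starts at (the $J$-choice contributing $b_{w_{i_0}}/Z_{\mathbf{w}}$), the telescoping of the formula \eqref{defzetahat} for $\widehat\zeta$ along the spine collapses to a single factor $b_{x_\ell}/b_{w_{i_0}}$. More precisely, along a path $\emptyset=u_0,u_1,\dots,u_r=v$ from a root of type $w_{i_0}$ to a leaf $v$ of type $x_\ell$, the product of the biasing factors $\frac{b_{\mathbf{e}(u_{s+1})}}{b_{\mathbf{e}(u_s)}}$ telescopes to $\frac{b_{x_\ell}}{b_{w_{i_0}}}$. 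Hence
\begin{equation*}
\widehat\pr^{(\mathbf{w})}(\mathbf{F}\vdash\mathbf{f})=\sum_{\ell}\frac{b_{x_\ell}}{Z_{\mathbf{w}}}\,\pr^{(\mathbf{w})}(\mathbf{F}\vdash\mathbf{f})=\frac{Z_{\mathbf{x}}}{Z_{\mathbf{w}}}\pr^{(\mathbf{w})}(\mathbf{F}\vdash\mathbf{f}),
\end{equation*}
using $Z_{\mathbf{x}}=\sum_\ell b_{x_\ell}$.

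An alternative, cleaner organization that avoids the case analysis over spine exit points: write $\mathbf{F}\vdash\mathbf{f}$ as a disjoint union over the leaf $v$ of $\mathbf{f}$ through which the spine passes together with the height-$|v|$ profile, and invoke \eqref{defbiased} (or the displayed conditional formula just after the proof of Proposition \ref{definftree}) inductively, peeling off one spine step at a time; each peeling multiplies by $b_{\mathbf{e}(u_{s+1})}/b_{\mathbf{e}(u_s)}$, and the product telescopes. I expect the main obstacle to be purely bookkeeping: being careful that $\mathbf{f}$ is not assumed full at its maximal height, so that the spine of a forest extending $\mathbf{f}$ must exit $\mathbf{f}$ through one of its \emph{leaves} $\partial\mathbf{f}$ (not through height-$n$ vertices), and correctly matching the generalized branching property of the previous lemma — namely that conditionally on $\mathbf{F}\vdash\mathbf{f}$ the grafted trees at the leaves are independent with the right root types — with the spine decomposition, so that the two sides genuinely factor through the same product $\prod_{u\in\mathbf{f}\setminus\partial\mathbf{f}}$ up to the single ratio $Z_{\mathbf{x}}/Z_{\mathbf{w}}$. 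Once the telescoping is set up correctly the identity is immediate.
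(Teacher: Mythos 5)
Your argument is correct, but it is organized differently from the paper's. The paper never invokes the spine construction: it writes the event $\{\mathbf{F}\vdash\mathbf{f}\}$ as the disjoint union, over all choices of trees $\mathbf{t}^1,\dots,\mathbf{t}^p$ of bounded height grafted at the leaves $u_1,\dots,u_p$, of the events $\{\mathbf{F}_{\leq n}=\tilde{\mathbf{f}}\}$ where $n=ht(\mathbf{f})$; it then applies the defining identity \eqref{defbiased} to each term, factors the $\pr^{(\mathbf{w})}$-probability by the branching property, and collapses the sum over each grafted tree using the martingale identity $\mathbb{E}^{(e_i)}[X_{n-|u_i|}]=b_{e_i}$, so that $\sum_i b_{e_i}=Z_{\mathbf{x}}$ emerges. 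You instead condition on which leaf the infinite spine exits through and telescope the ratios $b_{\mathbf{e}(u_{s+1})}/b_{\mathbf{e}(u_s)}$ (the product of the $\widehat\zeta/\zeta$ size-biasing factor and the spine-direction probability at each step) along the spine's passage through $\mathbf{f}$, which is a valid computation and gives a more transparent probabilistic reading of the factor $Z_{\mathbf{x}}/Z_{\mathbf{w}}$ as $\sum_\ell b_{x_\ell}/Z_{\mathbf{w}}$, i.e.\ the total weight of the possible exit leaves. The trade-off is that your route leans on the spine description of $\widehat\pr^{(\mathbf{w})}$, which the paper only sketches as a consequence of \eqref{defbiased}, whereas the paper's route uses nothing beyond \eqref{defbiased}, the branching property, and the martingale $(X_m)$; your bookkeeping remarks (the spine must exit through a leaf of $\partial\mathbf{f}$, not through a height-$n$ vertex, and the grafted trees contribute total mass one) are exactly the points that need to be made, and you make them.
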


\begin{proof}
Let $n$ be the height of $\mathbf{f}$. Any forest of height $n$ which extends $\mathbf{f}$ can be obtained by adding after each leaf $u$ of $\mathbf{f}$ a tree with height smaller than $n-|u|$. Let $u_1,\ldots,u_p$ be the leaves of $\mathbf{f}$, and $e_1,\ldots,e_p$ be their types, we will append for all $i$ a tree $(\mathbf{t}^i,\mathbf{e}^i)$ to the leaf $u_i$ and call the resulting forest $(\tilde{\mathbf{f}},\tilde{\mathbf{e}})$, implicitly a function of $\mathbf{f}$ and $\mathbf{t}^1,\ldots,\mathbf{t}^p$. Thus, recalling the notation $X$ for the martingale defined in Equation \eqref{defmartingale},
\begin{align*}
\widehat\pr^{(\mathbf{w})}(\mathbf{F}\vdash\mathbf{f})&=\sum_{\mathbf{t}_1,\ldots,\mathbf{t}_p} \sum_{v\in\tilde{\mathbf{f}}_n}\frac{ b_{\tilde{\mathbf{e}}(v)}}{Z_{\mathbf{w}}} \pr^{(\mathbf{w})}\big((\mathbf{F}_{\leq n},\mathbf{E}_{\leq n})=(\tilde{\mathbf{f}},\tilde{\mathbf{e}})\big)\\
&=\sum_{\mathbf{t}_1,\ldots,\mathbf{t}_p} \sum_{i=1}^p\sum_{v\in\mathbf{t}^i_{n-|u_i|}}\frac{ b_{\tilde{\mathbf{e}}(v)}}{Z_{\mathbf{w}}}\pr^{(\mathbf{w})}(\mathbf{F}\vdash\mathbf{f})\prod_{i=1}^p\pr^{(e_i)}(\mathbf{T}_{\leq n-|u_i|}=\mathbf{t}_i)\\
&=\frac{\pr^{(\mathbf{w})}(\mathbf{F}\vdash\mathbf{f})}{Z_{\mathbf{w}}}\sum_{i=1}^p \sum_{\mathbf{t}_1,\ldots,\mathbf{t}_p}\sum_{v\in\mathbf{t}^i_{n-|u_i|}}b_{\mathbf{e}_i(v)}\prod_{i=1}^p\pr^{(e_i)}(\mathbf{T}_{\leq n-|u_i|}=\mathbf{t}_i)\\
&=\frac{\pr^{(\mathbf{w})}(\mathbf{F}\vdash\mathbf{f})}{Z_{\mathbf{w}}}\sum_{i=1}^p \mathbb{E}^{(e_i)}[X_{n-|u_i|}] \\
&=\frac{\pr^{(\mathbf{w})}(\mathbf{F}\vdash\mathbf{f})}{Z_{\mathbf{w}}}\sum_{i=1}^p b_{e_i} \\
&=\frac{Z_{\mathbf{x}}}{Z_{\mathbf{w}}}\pr^{(\mathbf{w})}(\mathbf{F}\vdash\mathbf{f}).
\end{align*}

\end{proof}

\subsubsection{The case where there is a tree $\mathbf{t}$ such that $\pr^{(1)}(\mathbf{T}\vdash\mathbf{t})>0$ and $\mathbf{w}$ is the word formed by the leaves of $\mathbf{t}$}\label{medium}
Let $(\mathbf{t},\mathbf{e})$ be a tree with root of type $1$ such that $\pr^{(1)}(\mathbf{T}\vdash\mathbf{t})>0$. Let $\mathbf{w}$ be the word formed by the types of the leaves of $\mathbf{t}$, we will prove \eqref{byword}. We first need an intermediate lemma.

\begin{lemma}\label{completion} There exists a countable family of trees $\mathbf{t}^{(2)},\mathbf{t}^{(3)}\ldots$ such that, for any $K$-type tree $\mathbf{t}'$ with root of type $1$:
\begin{itemize}
\item either $\mathbf{t}\vdash \mathbf{t}'$.
\item or $\mathbf{t}'\vdash \mathbf{t}$.
\item or there is a unique $i$ such that $\mathbf{t}'\vdash\mathbf{t}^{(i)}$.
\end{itemize}
\end{lemma}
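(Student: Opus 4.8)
The plan is to exhibit the family $\{\mathbf{t}^{(i)}\}$ explicitly and verify the trichotomy by direct manipulation of the extension relation, treating $\vdash$ as the partial order it is. First I would dispose of the degenerate case $\mathbf{t}=\{\emptyset\}$: then $\mathbf{t}\vdash\mathbf{t}'$ for every $\mathbf{t}'$ (the second bullet always holds), so one may take an empty family. Assume from now on that $\mathbf{t}$ has height at least $1$, i.e. the word $\mathbf{w}_{\mathbf{t}}(\emptyset)$ is nonempty.

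For every nonempty word $\mathbf{y}\in\W_K$ let $\mathbf{s}_{\mathbf{y}}$ be the one-generation tree consisting of a root of type $1$ with ordered offspring $\mathbf{y}$, all children declared leaves (so $\mathbf{s}_{\mathbf{w}_{\mathbf{t}}(\emptyset)}$ is precisely $\mathbf{t}$ truncated at height $1$). Take $\{\mathbf{t}^{(i)}\}$ to be an enumeration of $\{\mathbf{s}_{\mathbf{y}}:\mathbf{y}\in\W_K,\ |\mathbf{y}|\geq1\}$, which is countable. The verification then has two short steps. Step 1: if $\mathbf{t}'$ has $\emptyset$ as a leaf then $\mathbf{t}'=\{\emptyset\}$ and $\mathbf{t}\vdash\mathbf{t}'$, so the first bullet holds; hence any $\mathbf{t}'$ escaping the first two bullets has $\emptyset$ internal with some offspring $\mathbf{z}:=\mathbf{w}_{\mathbf{t}'}(\emptyset)$, $|\mathbf{z}|\geq1$. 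Step 2: for such $\mathbf{t}'$ I claim $\mathbf{t}'\vdash\mathbf{s}_{\mathbf{z}}$ and that $\mathbf{s}_{\mathbf{z}}$ is the unique family member it extends. Existence is immediate: $\mathbf{s}_{\mathbf{z}}\subseteq\mathbf{t}'$ with matching types (both have $\emptyset$ with offspring $\mathbf{z}$), and every vertex of $\mathbf{t}'\setminus\mathbf{s}_{\mathbf{z}}$ has height $\geq2$, hence lies strictly below a height-$1$ vertex, which is a leaf of $\mathbf{s}_{\mathbf{z}}$ — exactly the definition of $\vdash$. For uniqueness: $\mathbf{t}'\vdash\mathbf{s}_{\mathbf{y}}$ forces $\emptyset$ to carry ordered offspring $\mathbf{y}$ in $\mathbf{t}'$, since $\mathbf{t}'$ must contain the children $1,\dots,|\mathbf{y}|$ of $\emptyset$ with the prescribed types, and any further child of $\emptyset$ in $\mathbf{t}'$ would be a height-$1$ vertex of $\mathbf{t}'\setminus\mathbf{s}_{\mathbf{y}}$ that is not below any leaf of $\mathbf{s}_{\mathbf{y}}$; thus $\mathbf{y}=\mathbf{z}$. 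This proves the statement.

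The step I expect to need the most care — and which rules out the naive attempts — is precisely the uniqueness check, or rather the choice of family making it true. One is tempted to take the $\mathbf{t}^{(i)}$ to be the $\vdash$-minimal trees incomparable with $\mathbf{t}$, or to graft "wrong" subtrees onto the various leaves of $\mathbf{t}$ or of its cuts; but the condition ``$\mathbf{t}'\vdash\mathbf{t}^{(i)}$'' is upward-closed in the extension order, so it cannot encode that $\mathbf{t}'$ has left other parts of $\mathbf{t}$ untouched, and two such ``wrong-subtree'' trees grafted at distinct leaves of a common cut of $\mathbf{t}$ admit a common upper bound — uniqueness then fails. Pinning the divergence to the offspring word of the root sidesteps this completely. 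For the same reason one must allow the $\mathbf{t}^{(i)}$ to be comparable with $\mathbf{t}$ (here $\mathbf{s}_{\mathbf{w}_{\mathbf{t}}(\emptyset)}=\mathbf{t}_{\leq 1}$ is a cut of $\mathbf{t}$); this is harmless, as the trichotomy is a disjunction, not a partition — in the application one will intersect the third case with the event that $\mathbf{T}$ is incomparable with $\mathbf{t}$. Beyond this, the only routine verification is that the extension relation interacts with the height-$1$ truncation as claimed in Step 2.
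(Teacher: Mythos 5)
Your construction does prove the trichotomy as literally stated, and the uniqueness check within your family is airtight: $\mathbf{t}'\vdash\mathbf{s}_{\mathbf{y}}$ forces $\mathbf{w}_{\mathbf{t}'}(\emptyset)=\mathbf{y}$, so the index is determined by the root's offspring word. This is a genuinely different family from the paper's, which takes for the $\mathbf{t}^{(i)}$ the trees of height $k\leq ht(\mathbf{t})$ that coincide with $\mathbf{t}$ strictly below height $k$ and first differ from it at height $k$ — i.e.\ it records the whole initial segment up to the first generation of disagreement with $\mathbf{t}$, not merely the root's offspring.

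The problem is your closing claim that the overlap between your third case and the event $\{\mathbf{T}\vdash\mathbf{t}\}$ is harmless. The lemma is used in Section \ref{medium} to write, with $\mathbf{t}^{(1)}=\mathbf{t}$ adjoined to the family,
\[
\pr^{(1)}(\#_1\mathbf{T}=1+dn)=\sum_{i\geq1}\pr^{(1)}(\mathbf{T}\vdash\mathbf{t}^{(i)},\,\#_1\mathbf{T}=1+dn)+\pr^{(1)}(\mathbf{t}\vdash\mathbf{T},\,\mathbf{t}\neq\mathbf{T},\,\#_1\mathbf{T}=1+dn),
\]
and to invoke $\sum_{i\geq1}\widehat{\pr}^{(1)}(\mathbf{T}\vdash\mathbf{t}^{(i)})=1$; both steps need the events $\{\mathbf{T}\vdash\mathbf{t}^{(i)}\}$, $i\geq1$, to be pairwise disjoint. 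With your family they are not: when $ht(\mathbf{t})\geq1$ one has $\{\mathbf{T}\vdash\mathbf{t}\}\subseteq\{\mathbf{T}\vdash\mathbf{s}_{\mathbf{w}_{\mathbf{t}}(\emptyset)}\}$ and $\mathbf{s}_{\mathbf{w}_{\mathbf{t}}(\emptyset)}=\mathbf{t}_{\leq1}$ belongs to your family, so the sum double-counts and $\sum_{i\geq1}\widehat{\pr}^{(1)}(\mathbf{T}\vdash\mathbf{t}^{(i)})=1+\widehat{\pr}^{(1)}(\mathbf{T}\vdash\mathbf{t})>1$, which breaks the Fatou argument that upgrades the lower bound \eqref{eq:lowerbound} to the matching upper bound. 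Your proposed repair — intersecting the third case with incomparability to $\mathbf{t}$ — is not what the application does and cannot be substituted in, because the key factorization $\pr^{(1)}(\mathbf{T}\vdash\mathbf{t}^{(i)},\#_1\mathbf{T}=1+dn)=\pr^{(1)}(\mathbf{T}\vdash\mathbf{t}^{(i)})\,\pr^{(\mathbf{w}^i)}(\#_1\mathbf{F}=\cdots)$ rests on the branching property conditionally on the bare extension event, and is lost once an incomparability constraint is imposed on the grafted subtrees. Finally, even dropping $\mathbf{t}^{(1)}=\mathbf{t}$ from the sum (your family without $\mathbf{t}$ does partition everything except $\{\mathbf{T}=\{\emptyset\}\}$) would only yield \eqref{byword} for words $\mathbf{w}$ in the support of $\zeta^{(1)}$, which is not enough for the remainder of the proof: Lemma \ref{reduce} and the final lemma of Section 4.2 require \eqref{byword} for the leaf-words of trees having arbitrarily many leaves of each type, and these can only be reached by families that track disagreement with $\mathbf{t}$ at all heights up to $ht(\mathbf{t})$, not only at the root.
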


\begin{proof}
For all $k\in \{2,3,\ldots,ht(\mathbf{t})\}$, take all the trees $\mathbf{t'}$ which have height $k$ and which satisfy both $\mathbf{t'}_{\leq k-1}=\mathbf{t}_{\leq k-1}$ and $\mathbf{t'}_k\neq\mathbf{t}_k.$ These are in countable amount and we can therefore call them $(\mathbf{t}^{(i)})_{i\geq 2}$ in any order. Now for any $K$-type tree $\mathbf{t'}$ with root of type $1$, by considering the highest integer $k$ such that $\mathbf{t'}_{\leq k-1}=\mathbf{t}_{\leq k-1}$, we directly obtain that, if none of $\mathbf{t}$ and $\mathbf{t'}$ extend the other, then $\mathbf{t}'$ extends one of the $\mathbf{t}^{(i)}$.
\end{proof}

Now let $\mathbf{t}^{(1)}=\mathbf{t}$, and, for all $i\in\N$, let also $\mathbf{w}^i$ be the word formed by the types of the leaves of $\mathbf{t}^{(i)}$. Write
\begin{align*}
\pr^{(1)}(\#_1 \mathbf{T}=1+dn)&=\sum_{i=1}^{\infty}\pr^{(1)}(\mathbf{T}\vdash \mathbf{t}^{(i)},\#_1 \mathbf{T}=1+dn) + \pr^{(1)}( \mathbf{t}\vdash\mathbf{T},\mathbf{t}\neq\mathbf{T},\#_1 \mathbf{T}=1+dn)\\
                    &=\sum_{i=1}^{\infty}\pr^{(1)}(\mathbf{T}\vdash \mathbf{t}^{(i)})\pr^{(\mathbf{w}^i)}(\#_1\mathbf{F}=1-q^{(i)}+dn) + \pr^{(1)}( \mathbf{t}\vdash\mathbf{T},\mathbf{t}\neq\mathbf{T},\#_1 \mathbf{T}=1+dn)
\end{align*}
where $q^{(i)}$ is the number of vertices of type $1$ of $\mathbf{t}^{(i)}$ which are not leaves.
Divide by $\pr^{(1)}(\#_1 \mathbf{T}=1+dn)$ on both sides of the equation to obtain
\begin{equation}
\label{eqdansmedium}\sum_{i=1}^{\infty}\pr^{(1)}(\mathbf{T}\vdash \mathbf{t}^{(i)})\frac{\pr^{(\mathbf{w}^i)}(\#_1\mathbf{F}=1-q^{(i)}+dn)}{\pr^{(1)}(\#_1 \mathbf{T}=1+dn)}+\pr^{(1)}( \mathbf{t}\vdash\mathbf{T}\;| \;\#_1 \mathbf{T}=1+dn) =1
\end{equation}
Note that 
	\[\pr^{(1)}( \mathbf{t}\vdash\mathbf{T}\;| \;\#_1 \mathbf{T}=1+dn)
\]
is equal to $0$ for $n$ large enough, since $\mathbf{t}$ is finite.

By the results of Section \ref{sec:per}, we have $1-q^{(i)}\equiv \alpha_{\mathbf{w}^{(i)}} \pmod d$ for all $i\in\N$, and thus, using the lower bound \eqref{eq:lowerbound}, we have
	\[\underset{n\to\infty}\liminf\, \pr^{(1)}(\mathbf{T}\vdash \mathbf{t}^{(i)})\frac{\pr^{(\mathbf{w}^i)}(\#_1\mathbf{F}=1-q^{(i)}+dn)}{\pr^{(1)}(\#_1 \mathbf{T}=1+dn)} \geq \pr^{(1)}(\mathbf{T}\vdash \mathbf{t}^{(i)})\frac{Z_{\mathbf{w}^i}}{b_1}
\] for all $i\in\N$. However, by Lemma \ref{extensionbiased} and Lemma \ref{completion}, we have
	\[\sum_{i=1}^{\infty} \pr^{(1)}(\mathbf{T}\vdash \mathbf{t}^{(i)})\frac{Z_{\mathbf{w}^i}}{b_1} = \sum_{i=1}^{\infty} \widehat{\pr}^{(i)}(\mathbf{T}\vdash \mathbf{t}^{(i)}) =1,
\]
and thus, whenever $\pr^{(1)}(\mathbf{T}\vdash \mathbf{t}^{(i)})$ is nonzero, we must have 
	\[\underset{n\to\infty} \limsup\,\frac{\pr^{(\mathbf{w}^i)}(\#_1\mathbf{F}=1-q^{(i)}+dn)}{\pr^{(1)}(\#_1 \mathbf{T}=1+dn)} \leq \frac{Z_{\mathbf{w}^i}}{b_1},
\]
which ends the proof of \eqref{byword}.

\subsubsection{Removing one element from $\mathbf{w}$}
\begin{lemma}\label{reduce} Let $\mathbf{w}\in\W_K$ be such that \eqref{byword} holds. Let $m$ be any integer in $[|\mathbf{w}|]$ and let $\tilde{\mathbf{w}}$ be $\mathbf{w}$, except that we remove $w_m$ from the list. Then $(H_{\tilde{\mathbf{w}}})$ also holds.
\end{lemma}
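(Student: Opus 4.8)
The plan is to reconstruct a forest with root type list $\mathbf{w}$ from a forest with root type list $\tilde{\mathbf{w}}$ by inserting one extra tree rooted at a vertex of type $w_m$, and to control the effect of this insertion on the generating probabilities. First I would fix a tree $(\mathbf{s},\mathbf{e})$ with root of type $w_m$ such that $\pr^{(w_m)}(\mathbf{T}=\mathbf{s})>0$ and such that $\mathbf{s}$ contains at least one vertex of type~$1$ (this exists by irreducibility); this gives a bijective-style decomposition, conditioning on the $m$-th tree of the $\mathbf{w}$-forest being exactly $\mathbf{s}$ versus not. More precisely I would write, for suitable residues,
\[
\pr^{(\mathbf{w})}(\#_1\mathbf{F}=\alpha_{\mathbf{w}}+dn)
= \pr^{(w_m)}(\mathbf{T}=\mathbf{s})\,\pr^{(\tilde{\mathbf{w}})}\!\big(\#_1\mathbf{F}=\alpha_{\mathbf{w}}+dn-\#_1\mathbf{s}\big) + R_n,
\]
where $R_n$ collects the contributions where the $m$-th component differs from $\mathbf{s}$. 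Using $(H_{\tilde{\mathbf w}})$ applied to the first term (note $\alpha_{\mathbf w}-\#_1\mathbf s\equiv\alpha_{\tilde{\mathbf w}}\pmod d$), and $(H_{\mathbf w})$ on the left-hand side, one gets an asymptotic lower bound for $\pr^{(\tilde{\mathbf w})}$ in terms of $\pr^{(1)}(\#_1\mathbf T=1+dn)$, namely $\liminf \ge (Z_{\tilde{\mathbf w}}/b_1)\cdot\text{(something)}$ after dividing through; the constant comes out right because $Z_{\mathbf w}=Z_{\tilde{\mathbf w}}+b_{w_m}$ and $\pr^{(w_m)}(\mathbf T=\mathbf s)$ times the relevant ratios must be accounted for.

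For the matching upper bound I would run the complementary argument: decompose $\pr^{(\tilde{\mathbf w})}$-forests by what happens when we graft, i.e. express a lower bound for $\pr^{(\mathbf w)}$ by a sum over all ways the extra type-$w_m$ tree can look (cutting it at its first generation of type~$1$, in the spirit of Section~\ref{lowerbound}), and feed in the already-established lower bound \eqref{eq:lowerbound} for each summand $(H_{?})$. Then, exactly as in Section~\ref{medium}, the key is that the Fatou lower bounds, when summed against the weights $\pr^{(w_m)}(\mathbf T=\mathbf s)$ (ranging $\mathbf s$ over all type-$w_m$ trees), must sum to the \emph{exact} value $Z_{\mathbf w}/b_1$ — which they do by Proposition~\ref{moments}(i) or by a size-biasing identity analogous to Lemma~\ref{extensionbiased} applied at the extra root — and hence every $\liminf$ in the decomposition is forced to be an honest limit. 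That pins down $\limsup \pr^{(\tilde{\mathbf w})}(\#_1\mathbf F=\alpha_{\tilde{\mathbf w}}+dn)/\pr^{(1)}(\#_1\mathbf T=1+dn)\le Z_{\tilde{\mathbf w}}/b_1$, giving $(H_{\tilde{\mathbf w}})$ (for $p=0$, which suffices by Lemma~\ref{ratio}).

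The main obstacle I anticipate is the bookkeeping of periodicity: ensuring all the shifted arguments $\alpha_{\mathbf w}+dn-\#_1\mathbf s$ etc.\ stay in the correct residue class mod $d$ so that the invoked instances of $(H_\cdot)$ and of \eqref{eq:lowerbound} are non-vacuous, and checking that the "leftover" term $R_n$ (or its analogue) is genuinely negligible relative to $\pr^{(1)}(\#_1\mathbf T=1+dn)$ — which again reduces, after conditioning, to an instance of the lower bound combined with the fact that the total size-biased mass is $1$. The rest is a variation on the $\liminf$-summing-to-the-right-constant trick already used twice, so no new analytic input (ratio limit theorems) is needed beyond what Lemma~\ref{ratio} provides.
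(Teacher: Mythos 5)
Your overall mechanism --- decompose $\pr^{(\mathbf{w})}(\#_1\mathbf{F}=\alpha_{\mathbf{w}}+dn)$ according to the shape of the $m$-th tree, divide through by the total, apply Fatou together with the lower bound \eqref{eq:lowerbound} to each summand, and use the fact that the resulting $\liminf$ bounds sum exactly to $1$ (via Proposition \ref{moments}$(i)$) to force each $\liminf$ to be an honest limit --- is precisely the paper's mechanism. But there is a genuine gap in the choice of decomposition. Cutting the $m$-th tree at its \emph{first} generation of type $1$ produces root words of the form ``$\tilde{\mathbf{w}}$ with $k$ extra letters equal to $1$'', where $k$ ranges over the support of $\mu_{w_m,1}$; your argument therefore pins down the limit of the corresponding ratios only for those $k$ with $\mu_{w_m,1}(k)>0$. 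To extract $(H_{\tilde{\mathbf{w}}})$ itself you need the term $k=0$ to carry positive weight, i.e.\ $\mu_{w_m,1}(0)>0$, meaning a tree rooted at type $w_m$ can contain no vertex of type $1$ at all. This holds when $w_m=1$ (criticality of the projected monotype tree forces $\mu_{1,1}(0)>0$), but it can fail for $w_m\neq 1$, e.g.\ if every individual of type $w_m$ necessarily begets a child of type $1$; in that case your decomposition never produces the word $\tilde{\mathbf{w}}$ and yields no information about it. The paper circumvents this by cutting the $m$-th tree at its first \emph{two} generations of type $1$: the coefficient of the $\tilde{\mathbf{w}}$-term is then $\mu_{w_m,1}(k)\,\mu_{1,1}(0)^k$ for any $k$ with $\mu_{w_m,1}(k)>0$, which is positive by irreducibility and criticality. (Your version is repairable without the two-generation cut: first prove the lemma for removing a letter equal to $1$, where the one-generation cut does suffice, then for general $w_m$ deduce the statement for $\tilde{\mathbf{w}}$ augmented by $k$ ones and strip those $k$ ones off one at a time.)

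A secondary issue: the first half of your proposal, which fixes a single tree $\mathbf{s}$, cannot deliver what you claim. The resulting inequality only gives $\limsup_n \pr^{(\tilde{\mathbf{w}})}(\cdots)/\pr^{(1)}(\cdots)\leq Z_{\mathbf{w}}/\big(b_1\,\pr^{(w_m)}(\mathbf{T}=\mathbf{s})\big)$, which is strictly larger than the target $Z_{\tilde{\mathbf{w}}}/b_1$ for every choice of $\mathbf{s}$, and you may not invoke $(H_{\tilde{\mathbf{w}}})$ in that step since it is the statement being proved. All the real content of your proposal lies in the summed decomposition of the second half, which is where the gap above must be closed.
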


\begin{proof} For $n\in\N$, we split the event $\{\#_1\mathbf{F}=\alpha_{\mathbf{w}}+dn\}$ according to the first and second generations of type $1$ in the $m$-th tree of the forest. By calling $k$ the number of vertices in the first generation of type $1$ issued from the $m$-th tree, and then $i_1,\ldots,i_k$ the numbers of vertices in the first generation of type $1$ of each corresponding subtree, we have	\[\pr^{(\mathbf{w})}(\#_1\mathbf{F}=\alpha_{\mathbf{w}}+dn)=\sum_{k}\mu_{w_m,1}(k)\sum_{i_1,\ldots,i_k}\prod_{r=1}^k\mu_{1,1}(i_r)\pr^{(\tilde{\mathbf{w}}^{i_1+\ldots+i_r})}(\#_1\mathbf{F}=\alpha_{\mathbf{w}}-k-\mathbbm{1}_{\{w_m=1\}}+dn)
\]
where $\tilde{\mathbf{w}}^{i_1+\ldots+i_r}$ is the word $\mathbf{w}$ where $w_m$ has been replaced by $1$, repeated $i_1+\ldots+i_r$ times. Note that the term of the sum where $k=0$ is to be interpreted as $\pr^{(\tilde{\mathbf{w}})}(\#_1\mathbf{F}=\alpha_{\mathbf{w}}-\mathbbm{1}_{\{w_m=1\}}+dn)$.

We now use the same argument as in the end of the previous section: we first divide by $\pr^{({\mathbf{w}})}(\#_1\mathbf{F}=\alpha_{\mathbf{w}}+dn)$ to get
	\[\sum_{k}\mu_{w_m,1}(k)\sum_{i_1,\ldots,i_k}\prod_{r=1}^k\mu_{1,1}(i_r)\frac{\pr^{(\tilde{\mathbf{w}}^r)}(\#_1\mathbf{F}=\alpha_{\mathbf{w}}-k-\mathbbm{1}_{\{w_m=1\}}+dn)}{\pr^{({\mathbf{w}})}(\#_1\mathbf{F}=\alpha_{\mathbf{w}}+dn)}=1.
\]
For each choice of $k$ and $i_1,\ldots,i_k$, using lower bound \eqref{eq:lowerbound} as well as \eqref{byword}, we have
	\[\underset{n\to\infty}\liminf \mu_{w_m,1}(k)\prod_{r=1}^k\mu_{1,1}(i_r)\frac{\pr^{(\tilde{\mathbf{w}}^r)}(\#_1\mathbf{F}=\alpha_{\mathbf{w}}-k-\mathbbm{1}_{\{w_m=1\}}+dn)}{\pr^{({\mathbf{w}})}(\#_1\mathbf{F}=\alpha_{\mathbf{w}}+dn)} \geq \mu_{w_m,1}(k)\prod_{r=1}^k\mu_{1,1}(i_r)\frac{Z_{\tilde{\mathbf{w}}}+\sum_{r} i_r}{Z_{\mathbf{w}}}.
\]
A repeated use of point $(i)$ of Proposition \ref{moments} shows that these add up to $1$, and thus, for $k$ and $i_1,\ldots,i_k$ such that $\mu_{w_m,1}(k)\prod_{r=1}^k\mu_{1,1}(i_r)\neq 0$, we do have
	\[\underset{n\to\infty}\lim \frac{\pr^{(\tilde{\mathbf{w}}^r)}(\#_1\mathbf{F}=\alpha_{\mathbf{w}}-k-\mathbbm{1}_{\{w_m=1\}}+dn)}{\pr^{(\tilde{\mathbf{w}})}(\#_1\mathbf{F}=\alpha_{\mathbf{w}}+dn)}=\frac{Z_{\tilde{\mathbf{w}}}+\sum_{r=1}^k i_r}{Z_{\mathbf{w}}}.
\]
By irreducibility, one can find $k$ such that $\mu_{w_m,1}(k)\neq0$, and by criticality one has $\mu_{1,1}(0)\neq 0$, meaning that we can take $i_1,\ldots,i_k$ all equal to zero, and this ends the proof.
\end{proof}

\subsubsection{End of the proof}
By applying Lemma \ref{reduce} repeatedly and using the fact that \eqref{byword} stays true if we permute the terms of $\mathbf{w}$, we obtain that, if $\mathbf{w}$ and $\mathbf{w}'$ are two words such that any type features fewer times in $\mathbf{w}'$ than in $\mathbf{w}$, then \eqref{byword} implies $(H_{\mathbf{w}'})$. Thus, by Section \ref{medium}, we now only need to show the following lemma.
\begin{lemma} For all nonnegative integers $n_1,\ldots,n_K$, there exists a $K$-type tree $(\mathbf{t},\mathbf{e})$ which has more than $n_i$ leaves of type $i$ for all $i\in[K]$, and such that $\pr^{(1)}(\mathbf{T}\vdash\mathbf{t})>0$.
\end{lemma}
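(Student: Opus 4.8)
Throughout this proof I will say that a finite $K$-type tree $(\mathbf t,\mathbf e)$ with root of type $i$ is \emph{$i$-feasible} if $\pr^{(i)}(\mathbf T\vdash\mathbf t)>0$; by the branching property for the extension relation (the lemma preceding Lemma \ref{extensionbiased}) this is equivalent to requiring that every \emph{internal} vertex $u$ of $\mathbf t$ carry an offspring word $\mathbf w_{\mathbf t}(u)$ lying in the support of $\zeta^{(\mathbf e(u))}$ — the leaves being completely unconstrained. The only construction I will use is \emph{grafting}: if $\mathbf t$ is $1$-feasible and $u$ is a leaf of $\mathbf t$ of type $i$, then grafting an $i$-feasible tree at $u$ produces a $1$-feasible tree. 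Since two distinct leaves of a tree are always incomparable, one may graft simultaneously at several leaves. The plan is to build the required tree by grafting, in three steps.

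\textbf{Step 1: some type branches.} I first claim there exist $i_0\in[K]$ and $\mathbf x_0\in\W_K$ with $|\mathbf x_0|\ge 2$ and $\zeta^{(i_0)}(\mathbf x_0)>0$. Suppose not; then every $\mu^{(i)}$ is supported on $\{\mathbf 0,e_1,\dots,e_K\}$, so $m_{ij}=\mu^{(i)}(e_j)$ and the $i$-th row sum of $M$ equals $1-\mu^{(i)}(\mathbf 0)\le 1$. Pick $i^*$ maximizing the positive eigenvector $\mathbf b$; from $b_{i^*}=\sum_j m_{i^*j}b_j\le b_{i^*}\sum_j m_{i^*j}\le b_{i^*}$ both inequalities are equalities, forcing the $i^*$-th row sum to be $1$ and $b_j=b_{i^*}$ whenever $m_{i^*j}>0$. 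Irreducibility then propagates $b_j=b_{i^*}$ along directed paths to every $j$, so $\mathbf b$ is constant; plugging back, every row sum of $M$ equals $1$, i.e.\ $\mu^{(i)}(\mathbf 0)=0$ for all $i$, so every offspring word has length exactly $1$ — contradicting the standing non-degeneracy assumption.

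\textbf{Step 2: many leaves of one prescribed type.} Fix $i\in[K]$. Using irreducibility, build an $i$-feasible tree whose spine of types runs from $i$ to $i_0$, give the terminal $i_0$-vertex the offspring word $\mathbf x_0$, and from two distinct children of it run (again by irreducibility) two branches whose types return to $i$, each ending in a leaf of type $i$, declaring every other child encountered to be a leaf. This gives an $i$-feasible $\mathbf s_i$ with at least two leaves of type $i$. Grafting copies of $\mathbf s_i$ at all its type-$i$ leaves replaces each such leaf by a copy containing at least two type-$i$ leaves, so one round at least doubles their number; iterating, for every $N$ there is an $i$-feasible tree $\mathbf r_{i,N}$ with at least $N$ leaves of type $i$.

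\textbf{Step 3: a feasible tree meeting every type, and conclusion.} Starting from the one-vertex tree (root of type $1$, which is a leaf of type $1$), add the types $2,\dots,K$ one at a time: given the current $1$-feasible tree, a target type $j^*$ not yet realized as a leaf, and any leaf $w$ of type $i(w)$, graft at $w$ a tree that (via paths and the branching vertex $i_0$ with word $\mathbf x_0$, exactly as in Step 2) sends one child back to a leaf of type $i(w)$ and another child to a leaf of type $j^*$; thus the types realized by leaves gain $j^*$ without losing any. This yields a $1$-feasible tree $\mathbf u$ with at least one leaf $v_i$ of each type $i$, and these $v_i$ are pairwise incomparable. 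Finally, given $n_1,\dots,n_K$, graft $\mathbf r_{i,\,n_i+1}$ at $v_i$ for every $i$: the result is $1$-feasible and has at least $n_i+1>n_i$ leaves of type $i$ for each $i$, as required. The genuinely delicate points are Step 1 — ruling out the purely linear degenerate case, which is where criticality and irreducibility are used via Perron–Frobenius — and ensuring in Steps 2--3 that grafting never destroys a leaf whose type has already been secured, which is handled uniformly by routing through the branching vertex $i_0$.
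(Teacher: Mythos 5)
Your proof is correct, but it follows a genuinely different route from the paper's. The paper's argument is probabilistic and leans on machinery already in place: the reduced tree $\Pi^{(1)}(\mathbf{T})$ obtained by keeping only type-$1$ vertices is a critical, non-degenerate monotype Galton--Watson tree with offspring law $\mu_{1,1}$, so each successive generation of type $1$ has positive probability of being strictly larger than the previous one; hence for $p$ large the $p$-th generation of type $1$ exceeds $n_1+\cdots+n_K$ with positive probability, and irreducibility then lets one continue below each of those vertices to reach a leaf of any prescribed type. Your argument is instead a bare-hands combinatorial construction by grafting, whose only non-trivial input is your Step 1: the existence of some type $i_0$ admitting an offspring word of length at least $2$ in the support of $\zeta^{(i_0)}$, which you extract from criticality, irreducibility and the standing non-degeneracy assumption via a correct Perron--Frobenius equality argument (row sums at most $1$ together with $\rho=1$ force all row sums to equal $1$, hence degeneracy). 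This is essentially the same non-degeneracy fact the paper invokes implicitly when it asserts that $\mu_{1,1}$ is not concentrated at $1$, but you prove it directly on the mean matrix rather than on the projected offspring law. What the paper's route buys is brevity, since $\mu_{1,1}$ being critical and non-degenerate is already recorded in Proposition 2.1; what yours buys is self-containedness and an explicit, fully deterministic construction of the tree $\mathbf{t}$, with the careful (and necessary) bookkeeping that grafting through the branching word $\mathbf{x}_0$ never destroys a leaf type already secured. Both proofs are valid.
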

\begin{proof}
The first step is showing that, for $p$ large enough, the $p$-th generation of type $1$ of $\mathbf{T}$ has positive probability of having more than $n_1+\ldots+n_K$ vertices, where the $p$-th generation of type $1$ is the set of vertices of type $1$ which have exactly $p$ ancestors of type $1$ including the root. This is immediate because the average of $\mu_{1,1}$ is $1$ and we are not in a degenerate tree, and thus the size of each generation of type $1$ has positive probability of being strictly larger than the previous generation.

Irreducibility then tells us that, after each vertex of the $p$-th generation of type $1$, there is a positive probability of finding a vertex of type $i$ for any $i$.
\end{proof}

\subsection{Proving \eqref{byword} when $\zeta$ is regular critical}
We now take general $\gamma$ and assume that $\zeta$ is regular critical. Our aim here is to prove the following refinement of \eqref{byword}: there exists a constant $C>0$ such that, for all $\mathbf{w}\in\W_K$
\begin{equation}\tag{$H'_{\mathbf{w}}$}\label{bywordprime}
\pr^{(\mathbf{w})}\Big(|\mathbf{F}|_{\gamma}=\alpha_{\mathbf{w}}+dn\Big) \underset{n\to\infty}\sim  Z_{\mathbf{w}}\sqrt{\frac{\gamma\cdot\mathbf{a}}{2\pi d \sigma^2 n^3}},
\end{equation}
where $\mathbf{a}$ is the left eigenvector of the mean matrix $M$, and $\sigma^2$ was defined in Section \ref{gen1}. The actual values do not matter much however, the important part is that the right-hand side is $Z_{\mathbf{w}}$ divided by $n^{3/2}$, times a constant. We will prove this by using analytic methods, notably the smooth implicit-function schema theorem (see notably \cite{FS}, Section VII.4 and \cite{MeirMoon}).
\subsubsection{Proving \eqref{bywordprime} for one-letter words}
Let $i\in[K]$ and, for appropriate $z\in\C$, let
	\[\psi_i(z)= \mathbb{E}^{(i)}\big[z^{|\mathbf{T}|_{\gamma}}\big]=\sum_{n\in\Z_+} \pr^{(i)}\big(|\mathbf{T}|_{\gamma}=n\big)z^n.
\]
This power series has non-negative coefficients, and, since $\zeta$ is critical, its radius of convergence is $1$. This is because $\psi_i(1)=1$ (since $\psi^{(i)}$ is the generating function of a probability distribution) and $\psi'_i(1)=\infty$ (Lemma \ref{expinf}). We let $\mathbb{D}$ be the open unit disk.
The periodicity structure of Section \ref{sec:per} lets us rewrite $\psi_i$ in a more precise way: there exists another power series $\phi_i$ such that
	\[\forall z\in\mathbb{D}, \psi_i(z)=z^{\alpha_i}\phi_i(z^d),
\]
and all the coefficients of $\phi_i$, except for a finite amount, are strictly positive. Our aim is then to show that the coefficient of $z^n$ in $\phi_i$ behaves like $n^{-3/2}$ as $n$ tends to infinity.

\medskip

Recall from Section \ref{gen1} the distribution $\pr^{(i)}_{\text{cut}_i}$ of the Galton-Watson tree cut at its first generation of type $i$. Given such a cut tree $\mathbf{t}$, we call $p_{\mathbf{t}}$ its number of leaves of type $i$. We obtain from the Galton-Watson construction the following equation:
	\[\psi_i(z)=z^{\gamma_i}\sum_{\mathbf{t}}\pr^{(i)}_{\text{cut}_i}(\mathbf{t}) z^{\sum_{j\neq i}\gamma_j\#_{j}(\mathbf{t})} \big(\psi_i(z)\big)^{p_{\mathbf{t}}}.
\]
This can be refined with the periodicity structure: we know from Proposition \ref{per} that, if $\pr^{(i)}_{\text{cut}_i}(\mathbf{t})>0$, then $\alpha_i\equiv \gamma_i+\sum_{j\neq i}\gamma_j\#_{j}(\mathbf{t})+p_{\mathbf{t}}\alpha_i\pmod d$. We let $n_{\mathbf{t}}\in\Z_+$ be such that $\gamma_i+\sum_{j\neq i}\gamma_j\#_{j}(\mathbf{t})+p_{\mathbf{t}}\alpha_i=\alpha_i+n_{\mathbf{t}}d$, and then obtain
	\[z^\alpha_i\phi_i(z^d)=\sum_{\mathbf{t}}\pr^{(i)}_{\text{cut}_i}(\mathbf{t}) z^{\alpha_i+dn_{\mathbf{t}}}\phi_i(z^d),
\]
which reduces to
	\[\phi_i(z)=\sum_{\mathbf{t}}\pr^{(i)}_{\text{cut}_i}(\mathbf{t}) z^{n_{\mathbf{t}}}\big(\phi_i(z)\big)^{p_{\mathbf{t}}}.
\]

The function $\phi_i$ thus solves
	\[\phi_i(z) = G\big(z,\phi_i(z)\big)
\]
where, for appropriate $z$ and $w$,
	\[G(z,w)=\mathbb{E}^{(i)}_{\text{cut}_i}\big[ z^{n_{\mathbf{T}}}w^{p_{\mathbf{T}}}\big].
\]

We will now apply smooth implicit-function schema theorem, as stated in \cite{FS}, Theorem VII.3. We have to check several conditions on the double power series $G(z,w)=\sum_{n,m}g_{m,n}z^mw^n$ with positive coefficients first.
\begin{itemize}

\item We show that $G$ is analytic in a domain $\{|z|<R, |w|<R\}$ with $R>1$. Because of regular criticality and Proposition \ref{moments}, the number of vertices lying before or on the first generation of type $i$ are both exponentially integrable variables (in the sense of Appendix \ref{sec:EI}), thus their sum, which is the total number of vertices lying before the first generation of type $i$, is also exponentially integrable. Thus there exists $z>1$ such that $\mathbb{E}^{(i)}_{\text{cut}_i}\big[ z^{\#\T}\big]<\infty$, and then bounding $|\T|_{\gamma}$ by $\gamma_{\max} \#\T$ ($\gamma_{\max}$ being the highest value of $\gamma_i,$ $i\in[K]$) and rewriting $n_{\mathbf{T}}$ in terms of $|\T|_{\gamma}$, we get $R>1$ such that $G(R,R)<\infty$.
\item Unlike the assumptions of \cite{FS}, it is possible that $g_{0,0}=0$ (for example if $\gamma_i=0$ and an individual of type $i$ can die without giving birth to any offspring), but this is just an unneeded normalization assumption. We do know however that the coefficient for $g_{0,1}\neq1$ and that $g_{0,n}\neq0$ for some $n\geq 2$ since the measure $\mu_{i,i}$ has expected value $1$ and non-zero variance.
\item The pair $(r,s)=(1,1)$ lies inside the domain of analyticity of $G$ and satisfies the so-called \emph{characteristic system}
	\[G(r,s)=s \qquad\text{ and }\qquad \partial_{w}G(r,s)=1.
\]
Of course, in our setting, we are just saying that the coefficients of $G$ sum up to $1$ and that the average of $\mu_{i,i}$ is $1$, which we know since Proposition \ref{moments}.
\end{itemize}

Knowing all of this and the fact that $\phi_i$ is aperiodic (in the sense of \cite{FS}, since only a finite number of its coefficients are not $0$), the analytic implicit-function schema gives us the following estimate for the coefficient of $z^n$ in $\phi_i$:
	\[\pr^{(i)}\Big(|\mathbf{F}|_{\gamma}=\alpha_i+dn\Big) \underset{n\to\infty}\sim \sqrt{\frac{\partial_zG(1,1)}{2\pi\partial^2_{ww}G(1,1)n^3}}.
\]
Proposition \ref{moments} gives us the wanted values for the partial derivatives: 
\begin{align*}
\partial_zG(1,1)&=\mathbb{E}_{\text{cut}_i}\big[n_{\T}\big]\\
                &=\frac{1}{d}\mathbb{E}_{\text{cut}_i}\big[\gamma_i+(p_{\T}-1)\alpha_i+\sum_{j\neq i}\gamma_j\#_{j}(\mathbf{T})\big]\\
                &=\frac{1}{d}\Big(\gamma_i+0+\sum_{j\neq i}\frac{\gamma_j a_j}{a_i}\Big)\\
                &=\frac{\gamma\cdot \mathbf{a}}{da_i},
\end{align*}
and
\begin{align*}
\partial^{2}_{ww}G(1,1)&=\mathbb{E}_{\text{cut}_i}\big[p_{\T}(p_{\T}-1)\big]\\
                       &=\frac{\sigma^2}{a_ib_i^2},
\end{align*}
and this ends our proof. \qed

\subsubsection{Moving on to general words}
The general case of \eqref{bywordprime} follows from the following lemma:
\begin{lemma}
Let $a>1$ and let $X$ and $Y$ be two independent integer-valued random variables such that 
	\[\pr(X=n)\underset{n\to\infty}\sim \frac{C_X}{n^a} \qquad \text{ and } \qquad \pr(Y=n)\underset{n\to\infty}\sim \frac{C_Y}{n^a}.
\]

Then we also have 
	\[\pr(X+Y=n)\underset{n\to\infty}\sim \frac{C_X+C_Y}{n^a}
\]
\end{lemma}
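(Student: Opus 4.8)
The plan is to exploit the convolution identity $\pr(X+Y=n)=\sum_{k\in\Z}\pr(X=k)\,\pr(Y=n-k)$ and to split the sum according to whether the mass is carried by $Y$ being of order $n$, by $X$ being of order $n$, or by an intermediate configuration where neither is. Concretely, fix once and for all some $\epsilon\in(0,1/2)$ and write $\pr(X+Y=n)=S_1^{(n)}+S_2^{(n)}+S_3^{(n)}$, where $S_1^{(n)}$ collects the terms with $k\leq\epsilon n$, $S_3^{(n)}$ those with $k\geq(1-\epsilon)n$ (equivalently $n-k\leq\epsilon n$), and $S_2^{(n)}$ the remaining middle range $\epsilon n<k<(1-\epsilon)n$; since $\epsilon<1/2$ this is an honest partition of $\Z$. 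From the hypotheses one extracts a constant $C'$ with $\pr(X=k)\leq C'/k^a$ and $\pr(Y=k)\leq C'/k^a$ for all $k\geq1$; in particular both laws are summable (a fact used repeatedly below, and consistent with the assumption $a>1$).

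First I would dispose of the middle term: on the range $\epsilon n<k<(1-\epsilon)n$ both $k$ and $n-k$ lie in $[\epsilon n,(1-\epsilon)n]$, so $\pr(X=k)\leq C'/(\epsilon n)^a$, whence $S_2^{(n)}\leq\frac{C'}{(\epsilon n)^a}\,\pr(X>\epsilon n)$ and therefore $n^aS_2^{(n)}\leq\epsilon^{-a}C'\,\pr(X>\epsilon n)\to0$ as $n\to\infty$. The bulk of the work is the analysis of $S_1^{(n)}$ (that of $S_3^{(n)}$ being identical after the substitution $j=n-k$, which exchanges the roles of $X$ and $Y$). I would write $n^a\,\pr(Y=n-k)=\bigl(\frac{n}{n-k}\bigr)^a\cdot(n-k)^a\pr(Y=n-k)$. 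For $k\leq\epsilon n$ one has $n-k\geq(1-\epsilon)n$, so $(n-k)^a\pr(Y=n-k)\to C_Y$ \emph{uniformly} in such $k$; meanwhile the weight $\bigl(\frac{n}{n-k}\bigr)^a$ is at most $(1-\epsilon)^{-a}$ on this range and tends to $1$ for each fixed $k$. Dominated convergence for series (with dominating summable sequence $k\mapsto(1-\epsilon)^{-a}\pr(X=k)$) then gives $\sum_{k\leq\epsilon n}\pr(X=k)\bigl(\frac{n}{n-k}\bigr)^a\to\sum_{k\in\Z}\pr(X=k)=1$, and combining the two facts yields $n^aS_1^{(n)}\to C_Y$. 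Symmetrically $n^aS_3^{(n)}\to C_X$. Summing the three pieces gives $n^a\,\pr(X+Y=n)\to C_X+C_Y$, which is the assertion.

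The main obstacle is precisely the rigorous treatment of $S_1^{(n)}$ (and symmetrically $S_3^{(n)}$): one must correctly combine a \emph{uniform} tail estimate for $\pr(Y=\cdot)$ over the whole moving window $\{k\leq\epsilon n\}$ with a dominated-convergence argument for the weights $\bigl(\frac{n}{n-k}\bigr)^a$, paying attention to the possibility that $k$ is negative, in which case the weight is $\leq1$ and causes no trouble. (An alternative, slightly more pedestrian route avoiding the uniform statement is to split $S_1^{(n)}$ further at a fixed large level $L$, treat the finite bottom part $k\leq L$ by termwise convergence $n^a\pr(Y=n-k)\to C_Y$, bound the tail part $L<k\leq\epsilon n$ crudely by $C'(1-\epsilon)^{-a}\pr(X>L)$, and let $L\to\infty$.) Everything else is a crude estimate. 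Note finally that, since $\epsilon$ does not appear in the limiting value, there is no need to let $\epsilon\to0$: any fixed $\epsilon\in(0,1/2)$ suffices.
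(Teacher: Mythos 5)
Your proof is correct and rests on the same basic idea as the paper's: write $\pr(X+Y=n)$ as a convolution and cut it into three pieces, with the middle piece negligible and the two end pieces contributing $C_Y$ and $C_X$ respectively. The execution differs in a way worth noting. The paper proves the liminf and limsup bounds separately, using a \emph{fixed} cut-off $K$ for the lower bound and the moving cut-off $\lfloor\epsilon n\rfloor$ for the upper bound, accumulating factors $(1\pm\epsilon)$ and $(1-\epsilon)^{-a}$ along the way and only recovering the exact constant by letting $\epsilon\to0$ at the end. You instead compute the exact limit of each piece in a single pass, by combining the \emph{uniform} asymptotic $(n-k)^a\pr(Y=n-k)\to C_Y$ over the window $k\leq\epsilon n$ with dominated convergence applied to the weights $(n/(n-k))^a$; this removes the need for the final limit in $\epsilon$ and is arguably cleaner. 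You also treat possibly negative integer values, which the statement formally allows but which the paper's convolution $\sum_{k=0}^{n}x_ky_{n-k}$ silently excludes (harmless in the application, where the variables are tree sizes). One cosmetic slip: in bounding $S_2^{(n)}$ you control $\pr(X=k)$ by $C'/(\epsilon n)^a$ but then sum the tail of $X$ rather than of $Y$; either pairing of ``bound one factor, sum the other'' works and both tails vanish, so nothing is lost.
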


\begin{proof} We will separately show that 
	\[ \underset{n\to\infty}\limsup\; n^a\pr(X+Y=n)\leq C_X+C_Y
\]	
and
	\[\underset{n\to\infty}\liminf\; n^a\pr(X+Y=n)\geq C_X+C_Y
\]

For $n\in\Z_+$, let $x_n=\pr(X=n)$, $y_n=\pr(Y=n)$ and $z_n=\pr(X+Y=n)=\sum_{k=0}^nx_ky_{n-k}$. Cut the sum the following way:

\begin{equation}\label{eq:cut}z_n= \sum_{k=0}^{K} x_ky_{n-k} + \sum_{k=K+1}^{n-K-1} x_ky_{n-k} + \sum_{k=n-K}^{n} x_ky_{n-k}.
\end{equation}

For the lower bound, let $\epsilon>0$, and choose $K$ large enough that $\sum_{k=0}^K x_k\geq (1-\epsilon)$, $\sum_{k=0}^K y_k\geq (1-\epsilon)$ and, for $n$ larger than $K$, $x_n\geq(1-\epsilon)C_X n^{-a}$ and $y_n\geq(1-\epsilon)C_Y n^{-a}$. Now take $n\geq 2K$. In the first sum, use $y_{n-k}\geq (1-\epsilon)C_Y(n-K)^{-a}$, and in the third, use $x_k\geq (1-\epsilon)C_X(n-K)^{-a}$ to obtain\[ z_n \geq (1-\epsilon)(n-K)^{-a} \big(C_X\sum_{k=0}^n y_k+C_Y\sum_{k=0}^n x_k\big)\geq (1-\epsilon)^2(n-K)^{-a}(C_X+C_Y).\]
Taking $n$ to infinity, we get 
	\[\underset{n\to\infty}\liminf\; n^a z_n\geq (1-\epsilon)^2(C_X+C_Y),
\]
and letting $\epsilon$ tend to $0$ gives us the lower bound.

The upper bound will require more work. Let $\epsilon>0$ and $0<\epsilon<1/2$, we will do the same cut as in Equation (\ref{eq:cut}), but with a varying $K$, equal to $\lfloor \epsilon n\rfloor.$ Take $n$ large enough such that, for $k\geq \lfloor\epsilon n\rfloor$, $k^{a}x_k\leq (1+\epsilon)C_X$ and $k^{a}y_k\leq (1+\epsilon)C_Y.$ Write in the first sum $y_{n-k}\leq (1+\epsilon)C_Y(n-\lfloor \epsilon n\rfloor)^{-a}$  and in the third one $x_k\leq (1+\epsilon)C_X(n-\lfloor \epsilon n\rfloor)^{-a}$, while for the middle one we use $x_ky_{n-k}\leq (1+\epsilon)^2C_XC_Y \lfloor \epsilon n\rfloor^{-2a}$. We then have
\begin{align*}
z_n &\leq \sum_{k=0}^{\lfloor \epsilon n\rfloor}x_k(1+\epsilon)(n-\lfloor \epsilon n\rfloor)^{-a} C_Y+\sum_{k=0}^{\lfloor \epsilon n\rfloor}y_k(1+\epsilon)(n-\lfloor \epsilon n\rfloor)^{-a} C_X + \sum_{k=0}^n (1+\epsilon)^2C_XC_Y\lfloor \epsilon n\rfloor^{-2a} \\ &\leq(1+\epsilon)(C_X+C_Y)(n-\lfloor \epsilon n\rfloor)^{-a}+(1+\epsilon)^2C_XC_Y(\lfloor \epsilon n\rfloor) n\lfloor \epsilon n\rfloor^{-2a}.
\end{align*}
Since $a>1,$ we have $1-2a<a,$ and thus the last term is negligible compared to $n^{-a}$. Hence $\limsup n^az_n\leq (1+\epsilon)(1-\epsilon)^{-a},$ and letting $\epsilon$ tend to $0$ gives us the wanted bound.
\end{proof}

The case $a=3/2$, coupled with a simple induction then proves \eqref{bywordprime} for general $\mathbf{w}\in\W_K$.
\bigskip
\section{Background on random planar maps}
\subsection{Planar maps}
As stated in the introduction, a planar map is a proper embedding $m$ of a finite connected planar graph in the sphere, in the sense that edges do not intersect. These are taken up to orientation-preserving homeomorphisms of the sphere, thus making them combinatorial objects. We call \emph{faces} of a map $m$ the connected components of its complement in the sphere, and let $\mathcal{F}_m$ be their set. The \emph{degree} of a face $f$, denoted by $\deg(f)$, is the number of edges it is adjacent to, counting multiplicity: we count every edge as many times as we encounter it when circling around $f$. The numbers of vertices, edges and faces of a map are respectively denoted by $\#V(m)$, $\#E(m)$ and $\#F(m)$. Finally, the graph distance on $m$ is denoted by $d$.

We are going to look at maps which are both \emph{rooted} and \emph{pointed}. These are triplets $(m,e,r)$, where $m$ is a planar map, $e$ is an oriented edge of $m$ called the root edge, starting at a vertex $e^-$ and pointing to a vertex $e^+$, and $r$ is a vertex of $m$. We call $\M$ the set of all such maps and $\M_n$ the set of such maps with $n$ vertices for $n\in\N$. A map $(m,e,r)$ will be called positive (resp. null, negative) if $d(r,e^+)=d(r,e^-)+1$ (resp. $d(r,e^-)$, $d(r,e^-)-1$). We call $\M^+$, $\M^0$ and $\M^-$ the corresponding sets of maps and, for $n\in\N$, $\M_n^+$, $\M_n^0$ and $M_n^-$ the corresponding sets of maps which have $n$ vertices. Since there is a trivial bijection between positive and negative maps, we will mostly restrict ourselves to $\M^+$ and $\M^0$. By convention, we add to $\M^+$ the vertex map $\dagger$, which consists of one vertex, no edges and one face.

\subsection{Boltzmann distributions}\label{chap4:defboltzmann}
Let $\mathbf{q}=(q_n)_{n\in\N}$ be a sequence of nonnegative numbers such that there exists $i\geq 3$ with $q_i>0$. For any map $m$, let 
	\[W_{\mathbf{q}}(m)=\prod_{f\in \mathcal{F}_m} q_{\deg(f)}.
\]
Note that this quantity only depends on the map $m$, and not on any root $r$ or point $m$.
We say that the sequence $\mathbf{q}$ is \emph{admissible} if the sum
	\[Z_{\mathbf{q}}= \sum_{(m,e,r)\in\M} W_{\mathbf{q}}(m)
\]
is finite. When $\mathbf{q}$ is admissible, we can define the Boltzmann probability distribution $B_{\mathbf{q}}$ by setting, for a pointed rooted map $(m,e,r)$,
	\[B_{\mathbf{q}}(m,e,r)=\frac{W_{\mathbf{q}}(m)}{Z_{\mathbf{q}}}.
\] We also introduce the versions of $B_{\mathbf{q}}$ conditioned to be positive or null: let $Z_{\mathbf{q}}^+= \sum_{(m,e,r)\in\M^+} W_{\mathbf{q}}(m)$ and $Z_{\mathbf{q}}^0= \sum_{(m,e,r)\in\M^0} W_{\mathbf{q}}(m)$ and, for any map $(m,e,r)$, $B^+_{\mathbf{q}}(m,e,r)=\frac{W_{\mathbf{q}}(m)}{Z_{\mathbf{q}}^+}$ if it is positive and $B^0_{\mathbf{q}}(m,e,r)=\frac{W_{\mathbf{q}}(m)}{Z_{\mathbf{q}}^0}$ if it is null. 

For nonnegative numbers $x$ and $y$, let
	\[f^{\bul}(x,y)=\sum_{k,k'}{\binom{2k+k'+1}{k+1}} {\binom{k+k'}{k}}q_{2+2k+k'}\,x^ky^{k'}
\]
and
	\[f^{\dia}(x,y)=\sum_{k,k'}{\binom{2k+k'}{k}} {\binom{k+k'}{k}}q_{1+2k+k'}\,x^ky^{k'}.
\]
It was shown in \cite{M06}, Proposition 1, that $\mathbf{q}$ is admissible if and only if the system
\begin{align}
1-\frac{1}{x}=f^{\bul}(x,y) \\
y=f^{\dia}(x,y)
\end{align}
has a solution with $x>1$, such that the spectral radius of the matrix

\[  \left( \begin{array}{ccc}
0         &           0            &          x-1 \\
\frac{x}{y}\partial_xf^{\dia}(x,y) & \partial_yf^{\dia}(x,y)    & 0 \\
\frac{x^2}{x-1}\partial_xf^{\bul}(x,y) & \frac{xy}{x-1}\partial_yf^{\bul}(x,y) & 0
\end{array} \right)
\]
is smaller than or equal to $1$. The existence of such a solution implies its uniqueness, with $x=Z_{\mathbf{q}}^+$ and $y=\sqrt{Z_{\mathbf{q}}^0}$. We let $Z^{\dia}_{\mathbf{q}}=\sqrt{Z_{\mathbf{q}}^0}$.

We then say that $\mathbf{q}$ is \emph{critical} if the spectral radius of the aforementioned matrix is exactly $1$.
and that it is \emph{regular critical} if, moreover, for some $\epsilon>0$, we have $f^{\bul}(Z_{\mathbf{q}}^++\epsilon,Z_{\mathbf{q}^{\dia}}+\epsilon)<\infty$.

\bigskip

\noindent\textbf{Random non-pointed maps.} We will also occasionally consider rooted maps $(m,e)$ without any specified point $r$. If $\mathbf{q}$ is admissible, we let $B^{\emptyset}_{\mathbf{q}}$ be the probability measure on the set of rooted maps such that, for a rooted map $(m,e)$,
	\[B^{\emptyset}_{\mathbf{q}}(m,e)=\frac{W_{\mathbf{q}}(m)}{Z^{\emptyset}_{\mathbf{q}}}
\]
where $Z^{\emptyset}_{\mathbf{q}}$ is an appropriate constant.

Note that, if a random rooted and pointed map $(M,E,R)$ has distribution $B_{\mathbf{q}}$, then the distribution of $(M,E)$ (ignoring $R$) is not $B^{\emptyset}_{\mathbf{q}}$, but $B^{\emptyset}_{\mathbf{q}}$ biased by the number of vertices: if $(m,e)$ is a rooted map with $n$ vertices, then $\sum_{r\in m} B_{\mathbf{q}}(m,e,r)$ is proportional to $nB^{\emptyset}_{\mathbf{q}}$. This is because, there are exactly $n$ ways of pointing $(m,e)$, and they all lead to a different rooted and pointed map.
\subsection{The Bouttier-Di Francesco-Guitter bijection}\label{sec:BDFG}
In \cite{BDFG} was exposed a bijection between rooted and pointed maps and a certain class of $4$-type labelled trees called \emph{mobiles}. Let us quickly recall the facts here, with a few variations to make the bijection more adapted to our study.
\subsubsection{Mobiles}
A finite spatial $4$-type tree $(\mathbf{t},\mathbf{e},\mathbf{l})$ is called a \emph{mobile} if the types satisfy the following conditions:
\begin{itemize}
\item The root has type $1$ or $2$,
\item The children of a vertex of type $1$ all have type $3$,
\item If a vertex has type $2$, then it has only one child, which has type $4$, except if it is the root, if $\emptyset$ has type $2$ then it has exactly two children, both of type $4$,
\item Vertices of type $3$ and $4$ can only have children of types $1$ and $2$,
\end{itemize}
and the labels satisfy the following conditions:
\begin{itemize}
\item Vertices of type $1$ and $3$ have integer labels, vertices of type $2$ and $4$ have labels in $\Z+\frac{1}{2}$,
\item The root has label $0$ if it is of type $1$, $\frac{1}{2}$ if it is of type $2$,
\item Vertices of type $3$ or $4$ have the same label as their father.
\item If $u\in \mathbf{t}$ has type $3$ or $4$, let by convention $u0=u\underline{k_u(\mathbf{t})+1}=u^-$. Then, for all $i\in\{0,\ldots,k_u(\mathbf{t})\}$, $\mathbf{l}\big(u\underline{i+1}\big)-\mathbf{l}(ui)\geq -\frac{1}{2} (\mathbbm{1}_{\{\mathbf{e}(ui)=1\}} + \mathbbm{1}_{\{\mathbf{e}(u\underline{i+1})=1\}})$.
\end{itemize}

The notation $u\underline{i+1}$ means that we are looking at $i+1$ as a letter, the word $u\underline{i+1}$ being the concatenation of $u$ and $i+1$.

Traditionally, vertices of type $1$ are represented as white circles $\bigcirc$, vertices of type $2$ are ``flags" $\dia$ while the other two types are dots $\bul$. Notice also that we do not need to mention the labels of vertices with type $3$ and $4$ since the label of such a vertex is the same as that of its father. We let $\mathbb{T}_M$ be the set of finite mobiles, $\mathbb{T}_M^+$ be the set of finite mobiles such that $\mathbf{e}(\emptyset)=1$ and $\mathbb{T}_M^0$ be the set of finite mobiles such that $\mathbf{e}(\emptyset)=2$.

\vspace{2.6cm}

\begin{figure}[ht]
\centering
\begingroup%
  \makeatletter%
  \providecommand\color[2][]{%
    \errmessage{(Inkscape) Color is used for the text in Inkscape, but the package 'color.sty' is not loaded}%
    \renewcommand\color[2][]{}%
  }%
  \providecommand\transparent[1]{%
    \errmessage{(Inkscape) Transparency is used (non-zero) for the text in Inkscape, but the package 'transparent.sty' is not loaded}%
    \renewcommand\transparent[1]{}%
  }%
  \providecommand\rotatebox[2]{#2}%
  \ifx\svgwidth\undefined%
    \setlength{\unitlength}{278.96328125bp}%
    \ifx\svgscale\undefined%
      \relax%
    \else%
      \setlength{\unitlength}{\unitlength * \real{\svgscale}}%
    \fi%
  \else%
    \setlength{\unitlength}{\svgwidth}%
  \fi%
  \global\let\svgwidth\undefined%
  \global\let\svgscale\undefined%
  \makeatother%
  \begin{picture}(1,0.65885037)%
    \put(0,0){\includegraphics[width=\unitlength]{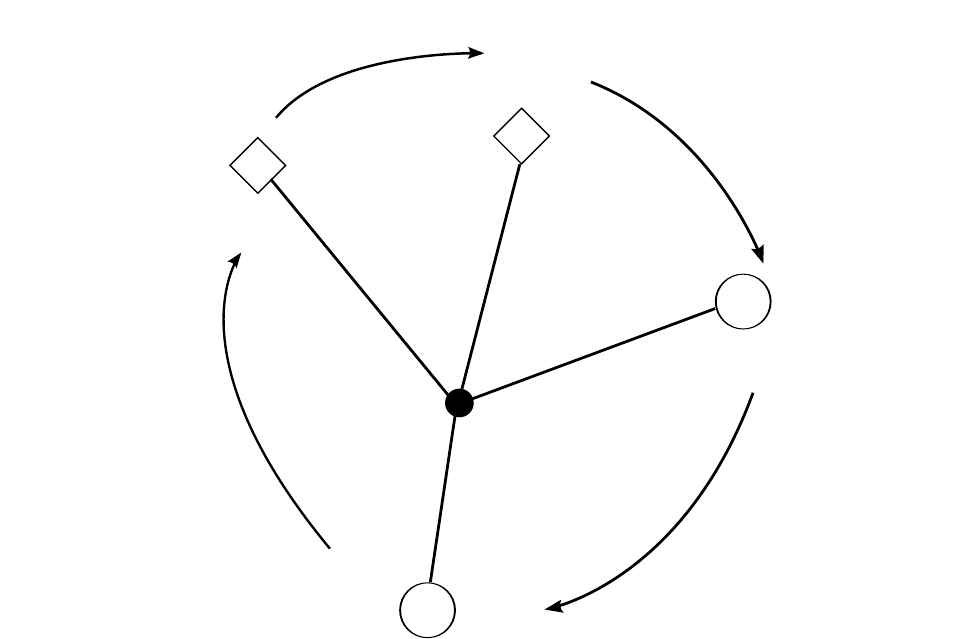}}%
    \put(0.28176451,0.64539934){\color[rgb]{0,0,0}\makebox(0,0)[lb]{\smash{$0,1,2,3,\ldots$}}}%
    \put(0.69361727,0.60528351){\color[rgb]{0,0,0}\makebox(0,0)[lb]{\smash{$-\frac{1}{2},\frac{1}{2},\frac{3}{2},\ldots$}}}%
    \put(0.74646028,0.04590319){\color[rgb]{0,0,0}\makebox(0,0)[lb]{\smash{$-1,0,1,2,\ldots$}}}%
    \put(-0.00061332,0.17650393){\color[rgb]{0,0,0}\makebox(0,0)[lb]{\smash{$-\frac{1}{2},\frac{1}{2},\frac{3}{2},\ldots$}}}%
  \end{picture}%
\endgroup%

\caption{The authorized labelling differences when circling around a vertex of type $3$ or $4$.}
\end{figure}

\newpage

\begin{figure}[!ht]
\centering
\begingroup%
  \makeatletter%
  \providecommand\color[2][]{%
    \errmessage{(Inkscape) Color is used for the text in Inkscape, but the package 'color.sty' is not loaded}%
    \renewcommand\color[2][]{}%
  }%
  \providecommand\transparent[1]{%
    \errmessage{(Inkscape) Transparency is used (non-zero) for the text in Inkscape, but the package 'transparent.sty' is not loaded}%
    \renewcommand\transparent[1]{}%
  }%
  \providecommand\rotatebox[2]{#2}%
  \ifx\svgwidth\undefined%
    \setlength{\unitlength}{179.74675767bp}%
    \ifx\svgscale\undefined%
      \relax%
    \else%
      \setlength{\unitlength}{\unitlength * \real{\svgscale}}%
    \fi%
  \else%
    \setlength{\unitlength}{\svgwidth}%
  \fi%
  \global\let\svgwidth\undefined%
  \global\let\svgscale\undefined%
  \makeatother%
  \begin{picture}(1,1.25632968)%
    \put(0,0){\includegraphics[width=\unitlength,page=1]{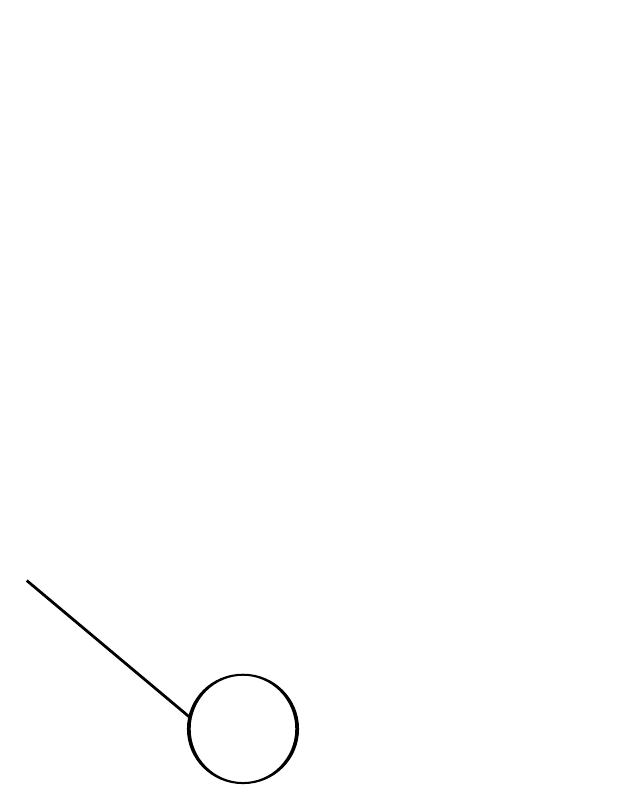}}%
    \put(0.37330084,0.06815543){\color[rgb]{0,0,0}\makebox(0,0)[lb]{\smash{$0$}}}%
    \put(0,0){\includegraphics[width=\unitlength,page=2]{mobilevrai.pdf}}%
    \put(0.07893664,0.62509988){\color[rgb]{0,0,0}\makebox(0,0)[lb]{\smash{$-1$}}}%
    \put(0,0){\includegraphics[width=\unitlength,page=3]{mobilevrai.pdf}}%
    \put(0.33291375,0.63864555){\color[rgb]{0,0,0}\makebox(0,0)[lb]{\smash{$-\frac{1}{2}$}}}%
    \put(0,0){\includegraphics[width=\unitlength,page=4]{mobilevrai.pdf}}%
    \put(0.33993114,1.14948958){\color[rgb]{0,0,0}\makebox(0,0)[lb]{\smash{$-1$}}}%
    \put(0,0){\includegraphics[width=\unitlength,page=5]{mobilevrai.pdf}}%
    \put(0.63268609,0.63545734){\color[rgb]{0,0,0}\makebox(0,0)[lb]{\smash{$\frac{1}{2}$}}}%
    \put(0,0){\includegraphics[width=\unitlength,page=6]{mobilevrai.pdf}}%
    \put(0.63705109,1.14956307){\color[rgb]{0,0,0}\makebox(0,0)[lb]{\smash{$1$}}}%
    \put(0,0){\includegraphics[width=\unitlength,page=7]{mobilevrai.pdf}}%
    \put(0.89062331,1.14549267){\color[rgb]{0,0,0}\makebox(0,0)[lb]{\smash{$1$}}}%
  \end{picture}%
\endgroup%
\caption{An example of a mobile, with root of type $1$.}
\label{figmobile}
\end{figure}

\subsubsection{The bijection}\label{sec:thebijection}
Let $(\mathbf{t},\mathbf{e},\mathbf{l})$ be a mobile and let us describe how to transform it into a map. Let $v_1,v_2,\ldots,v_p$ be, in order, the vertices of type $1$ or $2$ of $\mathbf{t}$ appearing in the standard contour process and $e_1,e_2,\ldots,e_p$ and $l_1,l_2,\ldots,l_p$ be the corresponding types and labels. We refer to $v_1,\ldots,v_p$ as the \emph{corners} of the tree because a vertex will be visited a number of times equal to the number of angular sectors around it delimited by the tree. Draw $\mathbf{t}$ in the plane and add an extra type $1$ vertex $r$ outside of $\mathbf{t}$, giving it label $\underset{\mathbf{e}(u)=1}\min \mathbf{l}(u)-1$. Now, for every $i\in [p]$, define the successor of the $i$-th corner as the next corner of type $1$ with label $l_i-1$ if $e_i=1$ and $l_i-\frac{1}{2}$ if $e_i=2$. If there is no such vertex, then let its successor be $r$. In both cases, draw an arc between $v_i$ and the successor. This construction can be done without having any of the arcs intersect. Now erase all the original edges of the tree, as well as vertices of types $3$ and $4$. Erase as well all the vertices of type $2$, merging the corresponding pairs of arcs. We are left with a planar map, with a distinguished vertex $r$. The root edge depends on the type of the root of the tree: if $\mathbf{e}(\emptyset)=1$ then we let the root edge be the first arc which was drawn (have it point to $\emptyset$ for a positive map, and away from $\emptyset$ for a negative map). If $\mathbf{e}(\emptyset)=2$ then we let the root edge be the result of the merging of the two edges adjacent to $\emptyset$, pointing to the successor of the first corner encountered in the contour process.

This construction gives us two bijections: one between $\mathbb{T}_M^+$ and $\mathcal{M}^+$ and one between $\mathbb{T}_M^0$ and $\mathcal{M}^0$, which we both call $\Psi$.

\newpage

\begin{figure}[!htpb]
\centering
\begingroup%
  \makeatletter%
  \providecommand\color[2][]{%
    \errmessage{(Inkscape) Color is used for the text in Inkscape, but the package 'color.sty' is not loaded}%
    \renewcommand\color[2][]{}%
  }%
  \providecommand\transparent[1]{%
    \errmessage{(Inkscape) Transparency is used (non-zero) for the text in Inkscape, but the package 'transparent.sty' is not loaded}%
    \renewcommand\transparent[1]{}%
  }%
  \providecommand\rotatebox[2]{#2}%
  \ifx\svgwidth\undefined%
    \setlength{\unitlength}{459.80242921bp}%
    \ifx\svgscale\undefined%
      \relax%
    \else%
      \setlength{\unitlength}{\unitlength * \real{\svgscale}}%
    \fi%
  \else%
    \setlength{\unitlength}{\svgwidth}%
  \fi%
  \global\let\svgwidth\undefined%
  \global\let\svgscale\undefined%
  \makeatother%
  \begin{picture}(1,0.62752515)%
    \put(0,0){\includegraphics[width=\unitlength]{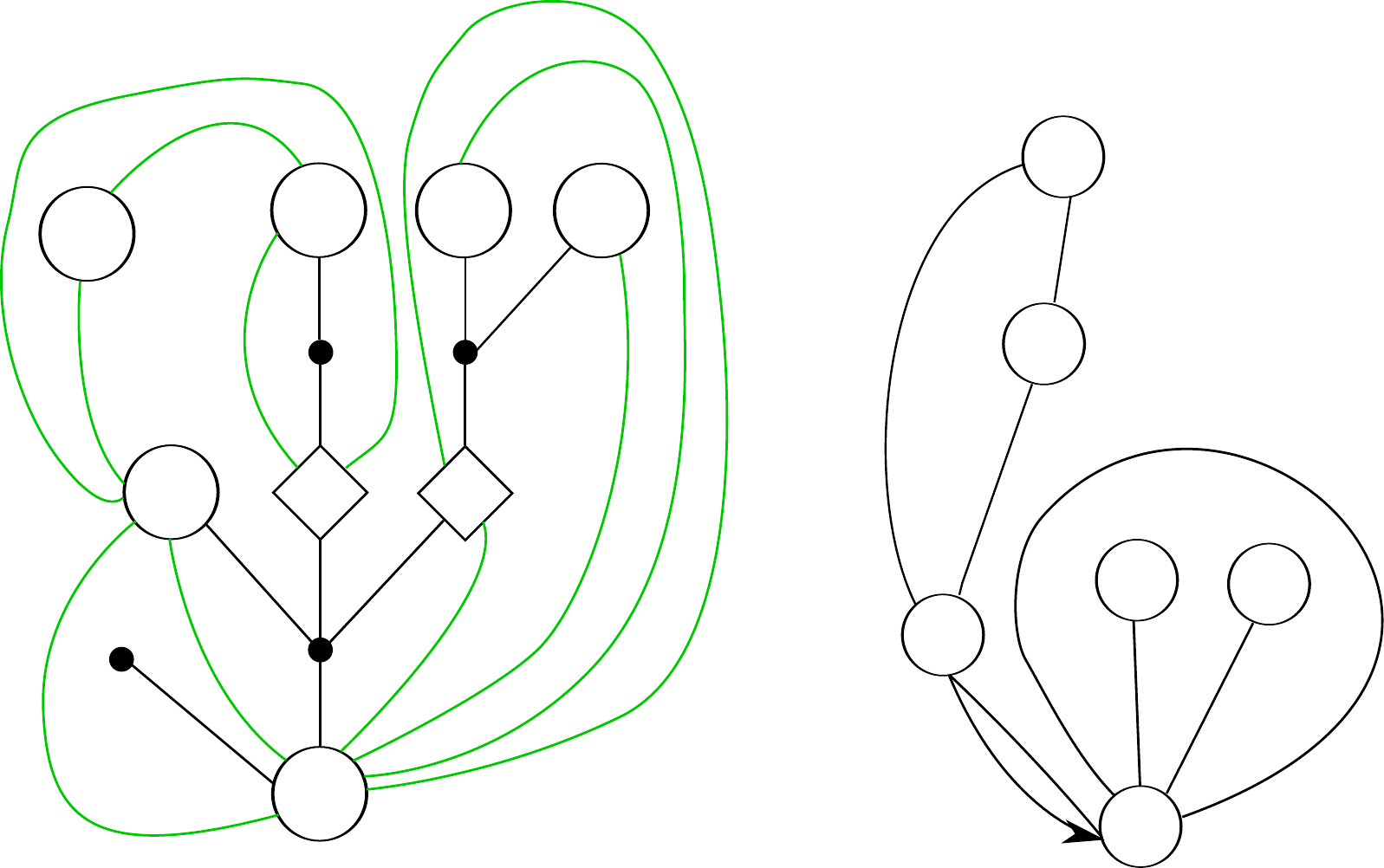}}%
    \put(0.22341962,0.04562033){\color[rgb]{0,0,0}\makebox(0,0)[lb]{\smash{$0$}}}%
    \put(0.1085877,0.26522478){\color[rgb]{0,0,0}\makebox(0,0)[lb]{\smash{$-1$}}}%
    \put(0.20901104,0.26863732){\color[rgb]{0,0,0}\makebox(0,0)[lb]{\smash{$-\frac{1}{2}$}}}%
    \put(0.21175429,0.46833735){\color[rgb]{0,0,0}\makebox(0,0)[lb]{\smash{$-1$}}}%
    \put(0.32888035,0.26739103){\color[rgb]{0,0,0}\makebox(0,0)[lb]{\smash{$\frac{1}{2}$}}}%
    \put(0.32816028,0.46879047){\color[rgb]{0,0,0}\makebox(0,0)[lb]{\smash{$1$}}}%
    \put(0.42855119,0.46879047){\color[rgb]{0,0,0}\makebox(0,0)[lb]{\smash{$1$}}}%
    \put(0.74737151,0.3746592){\color[rgb]{0,0,0}\makebox(0,0)[lb]{\smash{$r$}}}%
    \put(0.04432822,0.4512151){\color[rgb]{0,0,0}\makebox(0,0)[lb]{\smash{$-2$}}}%
  \end{picture}%
\endgroup%
\caption{Having added a vertex with label $-2$ to the mobile of Figure \ref{figmobile}, we transform it into a map.}
\end{figure}

It was shown in \cite{M06} that the BDFG bijection serves as a link between Galton-Watson mobiles and Boltzmann maps.

\begin{prop}\label{bij} Consider and admissible weight sequence $\mathbf{q}$ and define an unordered $4$-type offspring distribution $\mathbf{\mu}$ by
\begin{align*}
&\mu^{(1)}(0,0,k,0)=\frac{1}{Z^+_\mathbf{q}}(1-\frac{1}{Z^+_\mathbf{q}})^k \\
	&\mu^{(2)}(0,0,0,1)=1 \\
	&\mu^{(3)}(k,k',0,0)=\frac{(Z^+_\mathbf{q})^k (Z^{\dia}_\mathbf{q})^{k'}{\binom{2k+k'+1}{k+1}} {\binom{k+k'}{k}}q_{2+2k+k'}}{f^{\bul}(Z^+_\mathbf{q},Z^{\dia}_\mathbf{q})} \\
	&\mu^{(4)}(k,k',0,0)=\frac{(Z^+_\mathbf{q})^k (Z^{\dia}_\mathbf{q})^{k'}{\binom{2k+k'}{k}}{\binom{k+k'}{k}}q_{1+2k+k'}}{f^{\dia}(Z^+_\mathbf{q},Z^{\dia}_\mathbf{q})}.
\end{align*}

Let then $\z$ be the ordered offspring distribution which is uniform ordering of $\mu$, as explained in Section \ref{chap4:basicdef}. This offspring distribution is irreducible, and it is critical (resp. regular critical) if the weight sequence $\mathbf{q}$ is critical (resp. regular critical), while it is subcritical if $\mathbf{q}$ is admissible but not critical. Define also, for all ordered offspring type-list $\mathbf{w}$, $\nu^{(i)}_{\mathbf{w}}$ as the uniform measure on the set $D^{(i)}_{\mathbf{w}}$ of allowed displacements to have a mobile, which is precisely $D^{(i)}_{\mathbf{w}} = \{0\}^{|\mathbf{w}|}$ if $i=1$ or $i=2$ and
	\[ D^{(i)}_{\mathbf{w}} = 	\big\{\mathbf{y}=(y_i)_{i\in[|\mathbf{w}|]}:\; \ \forall i\in\{0,1,\ldots,|\mathbf{w}|\}, \ y_{i+1}-y_i + \frac{1}{2}(\mathbbm{1}_{\{w_i=1\}}+\mathbbm{1}_{\{w_{i+1}=1\}})\in\mathbf{Z_+} \big\},
\]
if $i=3$ or $i=4$, in which case we set by convention $w_0=w_{|\mathbf{w}|+1}=i-2$ and $y_0=y_{|\mathbf{w}|+1}=0$.

Then:
\begin{itemize}
\item if $(\mathbf{T,E,L})$ has distribution $\pr^{(1),(0)}_{\z,\nu}$, then the random map $\Psi(\mathbf{T,E,L})$ has distribution $\mathbb{B}^+_{\mathbf{q}}$.
\item if $(\mathbf{F,E,L})$ is a forest with distribution $\pr^{(2,2),(\frac{1}{2},\frac{1}{2})}_{\z,\nu}$, consider the mobile formed by merging both tree components at their roots. The image of this mobile by $\Psi$ has law $\mathbb{B}_{\mathbf{q}}^0$.
\end{itemize}

\end{prop}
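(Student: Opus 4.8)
The plan is to verify directly that $\Psi$ transports the Galton--Watson mobile measure to the Boltzmann measure, computing the former vertex by vertex, and to read off the criticality dichotomy from the mean matrix. Recall from Section~\ref{sec:thebijection} and \cite{BDFG} that $\Psi$ is a bijection from $\TT_M^+$ onto $\M^+$ and from $\TT_M^0$ onto $\M^0$, and that under it the faces of $m=\Psi(\mathbf{t},\mathbf{e},\mathbf{l})$ correspond bijectively to the vertices of type $3$ and $4$ of $\mathbf{t}$: a type-$3$ vertex with $k$ children of type $1$ and $k'$ children of type $2$ gives a face of degree $2+2k+k'$, and a type-$4$ vertex with the same offspring profile a face of degree $1+2k+k'$. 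It therefore suffices to show $\pr^{(1),(0)}_{\z,\nu}\big((\mathbf{T},\mathbf{E},\mathbf{L})=(\mathbf{t},\mathbf{e},\mathbf{l})\big)=W_{\mathbf{q}}(m)/Z^+_{\mathbf{q}}=\mathbb{B}^+_{\mathbf{q}}(m)$ for every finite mobile in $\TT_M^+$, together with the analogous identity with $Z^+_{\mathbf{q}}$ replaced by $Z^0_{\mathbf{q}}$ for the merged-forest construction of the second bullet. Since $\Psi$ is a bijection, summing this identity over all mobiles (resp.\ over the relevant forests) will show that the left-hand sides sum to $1$; in particular this re-proves that $\z$ is at most critical.

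\textbf{The core computation.} First I would compute $\# D^{(i)}_{\mathbf{w}}$. For $i\in\{3,4\}$ and $\mathbf{w}$ with $k$ letters $1$ and $k'$ letters $2$ (these are the only letters occurring below a type-$3$ or type-$4$ vertex), writing $\delta_j=y_{j+1}-y_j+\tfrac12(\mathbbm{1}_{\{w_j=1\}}+\mathbbm{1}_{\{w_{j+1}=1\}})$ with the conventions $w_0=w_{|\mathbf{w}|+1}=i-2$ and $y_0=y_{|\mathbf{w}|+1}=0$, the set $D^{(i)}_{\mathbf{w}}$ is in bijection with the vectors $(\delta_0,\dots,\delta_{|\mathbf{w}|})\in(\Z_+)^{|\mathbf{w}|+1}$ of sum $\tfrac12\sum_{j=0}^{|\mathbf{w}|}(\mathbbm{1}_{\{w_j=1\}}+\mathbbm{1}_{\{w_{j+1}=1\}})$, the telescoping sum of the increments $y_{j+1}-y_j$ being zero; that total equals $k+1$ for $i=3$ and $k$ for $i=4$, so a stars-and-bars count gives $\# D^{(3)}_{\mathbf{w}}=\binom{2k+k'+1}{k+1}$ and $\# D^{(4)}_{\mathbf{w}}=\binom{2k+k'}{k}$, while $\# D^{(1)}_{\mathbf{w}}=\# D^{(2)}_{\mathbf{w}}=1$. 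Next, since $\z$ is the uniform ordering of $\mu$, the value $\z^{(i)}(\mathbf{w})$ equals $\mu^{(i)}(p(\mathbf{w}))$ divided by the multinomial $\binom{|\mathbf{w}|}{p(\mathbf{w})}$, and $\nu^{(i)}_{\mathbf{w}}$ has mass $1/\# D^{(i)}_{\mathbf{w}}$ on each point, so \eqref{defGW} (adapted to spatial trees) factors $\pr^{(1),(0)}_{\z,\nu}(\mathbf{t},\mathbf{e},\mathbf{l})$ over vertices: a type-$1$ vertex with $k$ children contributes $\tfrac1{Z^+_{\mathbf{q}}}(1-\tfrac1{Z^+_{\mathbf{q}}})^{k}$, a type-$2$ vertex contributes $1$, and (the binomial $\binom{k+k'}{k}$ inside $\mu^{(3)}$, resp.\ $\mu^{(4)}$, cancelling the ordering factor and the remaining binomial cancelling $\# D^{(i)}_{\mathbf{w}}$) a type-$3$ vertex contributes $(Z^+_{\mathbf{q}})^{k}(Z^{\dia}_{\mathbf{q}})^{k'}q_{2+2k+k'}/f^{\bul}(Z^+_{\mathbf{q}},Z^{\dia}_{\mathbf{q}})$ and a type-$4$ vertex $(Z^+_{\mathbf{q}})^{k}(Z^{\dia}_{\mathbf{q}})^{k'}q_{1+2k+k'}/f^{\dia}(Z^+_{\mathbf{q}},Z^{\dia}_{\mathbf{q}})$. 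The $q$-factors multiply to $W_{\mathbf{q}}(m)$ by the face correspondence. Writing $n_i$ for the number of type-$i$ vertices of $\mathbf{t}$: every non-root type-$1$ vertex is a child of a type-$3$ or type-$4$ vertex, so the numerators contribute $(Z^+_{\mathbf{q}})^{n_1-1}$, which against the $(Z^+_{\mathbf{q}})^{-n_1}$ from the type-$1$ factors leaves $1/Z^+_{\mathbf{q}}$; the factor $(1-\tfrac1{Z^+_{\mathbf{q}}})^{n_3}$ cancels $f^{\bul}(Z^+_{\mathbf{q}},Z^{\dia}_{\mathbf{q}})^{n_3}$ using the admissibility relation $1-\tfrac1{Z^+_{\mathbf{q}}}=f^{\bul}(Z^+_{\mathbf{q}},Z^{\dia}_{\mathbf{q}})$; and since each type-$2$ vertex has a unique type-$4$ child and each type-$4$ vertex a type-$2$ parent, one has $n_4=n_2$ (the root being of type $1$), so $(Z^{\dia}_{\mathbf{q}})^{n_2}$ cancels $f^{\dia}(Z^+_{\mathbf{q}},Z^{\dia}_{\mathbf{q}})^{n_4}$ via $Z^{\dia}_{\mathbf{q}}=f^{\dia}(Z^+_{\mathbf{q}},Z^{\dia}_{\mathbf{q}})$. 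What remains is exactly $W_{\mathbf{q}}(m)/Z^+_{\mathbf{q}}$.

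\textbf{Criticality and the null case.} Irreducibility of $\z$ is immediate from the type rules ($1\to3\to\{1,2\}$ and $2\to4\to\{1,2\}$ occur with positive probability). For the criticality dichotomy I would compute the $4\times4$ mean matrix $M$ of $\mu$ at $(Z^+_{\mathbf{q}},Z^{\dia}_{\mathbf{q}})$ and check that its Perron eigenvalue is $1$ (resp.\ $<1$) precisely when the spectral radius of the $3\times3$ matrix of Section~\ref{chap4:defboltzmann} is $1$ (resp.\ $<1$), which is the content of \cite{M06}; regular criticality of $\z$ is exactly the requirement that $f^{\bul}$ and $f^{\dia}$ remain finite slightly past $(Z^+_{\mathbf{q}},Z^{\dia}_{\mathbf{q}})$, i.e.\ regular criticality of $\mathbf{q}$. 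For the second bullet one repeats the core computation on the mobile obtained by merging two independent trees of law $\pr^{(2),(\frac{1}{2})}_{\z,\nu}$ at their roots: this object lies in $\TT_M^0$ (its type-$2$ root carries the two type-$4$ children that a type-$2$ root must have), each of the two roots contributes $\mu^{(2)}(0,0,0,1)=1$ with forced zero displacement, and the same bookkeeping applies except that there is now no type-$1$ root, so the $Z^+_{\mathbf{q}}$-powers cancel completely, while the number of type-$4$ vertices exceeds that of type-$2$ vertices by one, producing a leftover factor $(Z^{\dia}_{\mathbf{q}})^{-2}=1/Z^0_{\mathbf{q}}$ and hence $W_{\mathbf{q}}(m)/Z^0_{\mathbf{q}}=\mathbb{B}^0_{\mathbf{q}}(m)$.

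\textbf{Expected main obstacle.} The delicate step is the core computation: one must keep exact track of the exponents of $Z^+_{\mathbf{q}}$ and $Z^{\dia}_{\mathbf{q}}$ as functions of the numbers of vertices of each type and make them cancel precisely against the $f^{\bul}$ and $f^{\dia}$ denominators via the admissibility equations, and one must also pin down the precise face-degree-to-offspring-profile dictionary in the present (slightly non-standard) presentation of the BDFG bijection, which requires reading \cite{BDFG} carefully.
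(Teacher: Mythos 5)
Your proposal is correct. Note that the paper itself gives no proof of Proposition \ref{bij}: it is quoted from \cite{M06}, and your weight-cancellation argument is essentially the proof given there. I checked the key steps — the stars-and-bars count $\# D^{(3)}_{\mathbf{w}}=\binom{2k+k'+1}{k+1}$ and $\# D^{(4)}_{\mathbf{w}}=\binom{2k+k'}{k}$ (the telescoping constraint forcing the increments to sum to $k+1$, resp.\ $k$), the cancellation of $\binom{k+k'}{k}$ against the uniform-ordering factor, the exponent bookkeeping $n_4=n_2$ in the positive case and $n_4=n_2+1$ after merging in the null case yielding the leftover $1/Z^+_{\mathbf{q}}$, resp.\ $(Z^{\dia}_{\mathbf{q}})^{-2}=1/Z^0_{\mathbf{q}}$ — and they all go through as you describe, given the face-degree dictionary of the BDFG bijection and the equivalence of the two criticality conditions, both of which you (like the paper) correctly take from \cite{BDFG} and \cite{M06}.
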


\begin{rem} The operation of merging two trees at their roots can be formalized the following way. Consider two trees $(\mathbf{t}_1,\mathbf{e}_1)$ $(\mathbf{t}_2,\mathbf{e}_2)$ which are such that, in both trees, the root has type $2$ and has a unique child, with type $4$. For $u\in\mathbf{t}_2\setminus\{\emptyset\}$, we can write $u=1u^2\ldots u^k$. Let then $u'=2u^2\ldots u^k$, and let $\mathbf{t}_2'=\big\{u',\; u\in\mathbf{t}_2\setminus\{\emptyset\}\big\}$. We can now define $\mathbf{t}=\mathbf{t}_1\cup\mathbf{t}_2'$, which is easily checked to be a tree. Types can then simply be assigned by setting, for $u\in\mathbf{t}_1$, $\mathbf{e}(u)=\mathbf{e}_1(u)$ and, for $u\in\mathbf{t}_2\setminus\{\emptyset\}$, $\mathbf{e}(u')=\mathbf{e}_2(u)$.

This operation is of course continuous for the local convergence topology since, for any $k\in\Z_+$, the $k$-th generation of $\mathbf{t}$ is completely determined by the $k$-th generations of $\mathbf{t}_1$ and $\mathbf{t}_2$.
\end{rem}

\begin{rem} If the weight sequence $\mathbf{q}$ is such that $q_{2n+1}=0$ for all $n\in\Z_+$, then a $\mathbf{q}$-Boltzmann map is a.s. bipartite, which implies that there will be no vertices of type $2$ or $4$ in the corresponding tree, in which case we consider the mobile as a tree with two types, and it stays irreducible. Moreover, we then have $Z^{\dia}_{\mathbf{q}}=0$.
\end{rem}

\begin{rem} The number of vertices, edges and faces of the map can be read on the tree.
\begin{itemize}
\item $\#V\big(\Psi(\mathbf{T,E,L})\big)=1 + \#_1\mathbf{T}.$
\item $\#E\big(\Psi(\mathbf{T,E,L})\big)=|\mathbf{T}|_{\gamma}-1$ with $\gamma=(1,0,1,1).$
\item $\#F\big(\Psi(\mathbf{T,E,L})\big) =|\mathbf{T}|_{\gamma}$ with $\gamma=(0,0,1,1).$
\end{itemize}
\end{rem}
\subsection{Infinite maps and local convergence}
If $(m,e)$ is a rooted map and $k\in\N$, we let $B_{m,e}(k)$ be the map formed by all vertices whose graph distance to $e^+$ is less than or equal to $k$, and all edges connecting such vertices, except if the distance between each vertex of such an edge and $e^+$ is exactly $k$. The map $B_{m,e}(k)$ is still rooted at the same oriented edge $e$. For two rooted maps $(m,e)$ and $(m',e')$, let $d\big((m,e),(m',e')\big)=\frac{1}{1+p}$ where $p$ is the supremum of all integers $k$ such that $B_{m,e}(k)$ is equivalent to $B_{m',e'}(k)$. This defines a metric on the set of rooted maps. Call then $\overline{\mathcal{M}}$ the completion of this set. Elements of $\overline{\mathcal{M}}$ which are not finite maps are then called \emph{infinite maps}, which we mostly consider as a sequence of compatible finite maps: $(m,e)=(m_i,e_i)_{i\in\N}$ with $(m_i,e_i)=B_{m_{i+1},e_{i+1}}(i)$ for all $i$. Note in particular that infinite maps are not pointed.


As with trees and forests, convergence in distribution is simply characterized: if $(M_n,E_n)_{n\in\N}$ is a sequence of random rooted maps, one can check that it converges in distribution to a certain random map $(M,E)$ if and only if, for all finite deterministic maps $(m',e')$ and all $k\in \N$, $\pr\big(B_{(M_n,E_n)}(k)=(m',e')\big)$ converges to $\pr\big(B_{(M,E)}(k)=(m',e')\big).$

\section{Convergence to infinite Boltzmann maps}\label{sec:cvcartes}
We now take a critical weight sequence $\mathbf{q}$, and take $\mu$, $\z$ and $\nu$ as defined in Proposition \ref{bij}. Since, in the BDFG bijection, the number of vertices, edges and faces of the map correspond to the vertices of certain types of the tree, we expect Theorem \ref{cvforests} to tell us that Boltzmann maps with large amounts of vertices converge locally. This section is dedicated to establishing the fact that this is indeed the case.

We first define three different periodicity factors $d_V$, $d_E$ and $d_F$ corresponding to vertices, edges and faces
\begin{align*}d_V&=gcd\Big(\big\{n\in\N:\,q_{2n+2}>0\big\}\cup\big\{m\in 2\Z_+ +1:\,q_{m+2}>0\big\}\Big) \\
	d_E&=gcd\Big(\big\{n\in\N:\,q_{2n}>0\big\}\cup\big\{m\in 2\Z_+ +1:\,q_m>0\big\}\Big)\\
	d_F&=	\begin{cases}
	1\text{ if }\exists n\in\N:\,q_{2n}>0 \\
	2\text{ otherwise.}
	\end{cases}
\end{align*}
Let also $\alpha_V=2$ and $\alpha_E=\alpha_F=0$.
\begin{theo}\label{chap4:thcvcartes}
Let $I\in\{V,E,F\}$. If $I=E$ or $I=F$, we also assume that $\mathbf{q}$ is regular critical.
For appropriate $n\in\N$, let $(M_n,E_n,R_n)$ be a variable with distribution $B_{\mathbf{q}}$, conditioned on $\#I(M)=n$. We then have 
	\[(M_n,E_n) \underset{\underset{n\in \alpha_I+d_I\Z_+}{n\to\infty}}\Longrightarrow (M_{\infty},E_{\infty})
\]
in distribution for the local convergence, where $(M_{\infty},E_{\infty})$ is an infinite rooted map which we call the infinite $\mathbf{q}$-Boltzmann map.

If $I=E$ or $I=F$, assuming $\mathbf{q}$ is regular critical, consider now $(M_n,E_n)$ with distribution $B^{\emptyset}_{\mathbf{q}}$ conditioned on $\#I(M)=n$. We then also have
	\[(M_n,E_n) \underset{\underset{n\in \alpha_I+d_I\Z_+}{n\to\infty}}\Longrightarrow (M_{\infty},E_{\infty})
\]
with the same limiting map $(M_{\infty},E_{\infty})$.
\end{theo}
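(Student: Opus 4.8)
The plan is to transport the tree convergence of Theorem~\ref{cvforests} to maps through the Bouttier--Di Francesco--Guitter bijection, broadly following \cite{CMM}. First I would reduce to positive and null maps. Writing $B_{\mathbf q}=p^{+}B^{+}_{\mathbf q}+p^{0}B^{0}_{\mathbf q}+p^{-}B^{-}_{\mathbf q}$ and using the trivial bijection $\M^{+}\leftrightarrow\M^{-}$, it is enough to analyse $B^{+}_{\mathbf q}$ and $B^{0}_{\mathbf q}$ conditioned on $\#I=n$ and then recombine: the mixing weights $p^{\epsilon}B^{\epsilon}_{\mathbf q}(\#I=n)/B_{\mathbf q}(\#I=n)$ converge thanks to the local-limit estimates $(H_{\mathbf w})$, resp.\ $(H'_{\mathbf w})$, established in the proof of Theorem~\ref{cvforests}. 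By Proposition~\ref{bij}, a $B^{+}_{\mathbf q}$-distributed map is $\Psi(\mathbf T,\mathbf E,\mathbf L)$ with $(\mathbf T,\mathbf E,\mathbf L)\sim\pr^{(1),(0)}_{\z,\nu}$, and by the formulas expressing $\#V$, $\#E$, $\#F$ in terms of $\#_{1}\mathbf T$ and $|\mathbf T|_{\gamma}$, conditioning on $\#I(M)=n$ is precisely conditioning the tree on the matching value of $\#_{1}\mathbf T$ (for $I=V$, where $\gamma=(1,0,0,0)=\mathbbm 1_{j=1}$) or of $|\mathbf T|_{\gamma}$ with $\gamma=(1,0,1,1)$ or $(0,0,1,1)$ (for $I=E$ or $F$; here regular criticality of $\z$, inherited from $\mathbf q$ by Proposition~\ref{bij}, is what permits the second case of Theorem~\ref{cvforests}). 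The periodicity classes of Section~\ref{sec:per} are consistent with the constants $d_{I}$, $\alpha_{I}$. Thus the conditioned labelled tree converges locally in law to $\widehat{\pr}^{(1)}$ (the labels do not affect the proof of Theorem~\ref{cvforests}, as remarked there); the null case is identical, working with the forest $\pr^{(2,2),(\frac12,\frac12)}_{\z,\nu}$, limit $\widehat{\pr}^{(2,2)}$, followed by the root-merging operation, which is continuous for the local topology. Crucially, the limiting tree in Theorem~\ref{cvforests} does not depend on $\gamma$, so the three choices of $I$ produce the image of the \emph{same} infinite mobile, which is why we may speak of a single infinite $\mathbf q$-Boltzmann map.

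The crux is to extend $\Psi$ to infinite mobiles and prove it is continuous, for the respective local topologies, at $\widehat{\pr}^{(1)}$-almost every (resp.\ $\widehat{\pr}^{(2,2)}$-almost every) tree; the theorem then follows by the mapping theorem, with $(M_{\infty},E_{\infty})$ the resulting image law (a mixture of the $\Psi$-images of $\widehat{\pr}^{(1)}$ and $\widehat{\pr}^{(2,2)}$). Two points must be checked. (i) The corner--successor construction must make sense on an infinite mobile, i.e.\ every corner must have a successor, which amounts to the infimum of the labels of type-$1$ vertices along the spine being $-\infty$. I would prove that $\widehat{\pr}$-a.s.\ these labels perform a random walk which, by the centering property of the displacement laws $\nu$ (the multitype counterpart of the usual centering of BDFG labels), is recurrent, so its infimum is indeed $-\infty$; this is also what makes the limit an \emph{unpointed} map, the distinguished vertex $r$ escaping to infinity. (ii) One must show that, $\widehat{\pr}$-a.s., for every $k$ the ball $B_{M,E}(k)$ around the tip of the root edge is determined by, and depends locally constantly on, a finite (random) prefix of the contour exploration of the tree. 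Point (ii) rests on the inequality $d(u,v)\geq|\mathbf l(u)-\mathbf l(v)|$ for type-$1$ vertices $u,v$ (so $B_{M,E}(k)$ only concerns vertices whose label lies in a window of width $2k$ about $\mathbf l(e^{+})$), combined with the fact that, by (i), every successor chain started near $e^{+}$ reaches any prescribed label in finitely many steps, the corners realizing these successors being found after exploring only boundedly much of the tree. I expect this continuity statement to be the main obstacle; the remaining ingredients are comparatively soft, and the argument runs parallel to Section~3 of \cite{CMM}.

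Finally, the unpointed statements (the case $I=V$ being immediate, since every map with $\#V=n$ admits exactly $n$ pointings, so that $B^{\emptyset}_{\mathbf q}(\cdot\mid\#V=n)$ is just the $(M,E)$-marginal of $B_{\mathbf q}(\cdot\mid\#V=n)$). For $I\in\{E,F\}$ one uses that the $(M,E)$-marginal of $B_{\mathbf q}(\cdot\mid\#I=n)$ equals $B^{\emptyset}_{\mathbf q}(\cdot\mid\#I=n)$ biased by $\#V(M)$, and de-biases: writing $\mathbb{E}^{\emptyset}_{n}$ and $\mathbb{E}_{n}$ for expectation under $B^{\emptyset}_{\mathbf q}(\cdot\mid\#I=n)$ and $B_{\mathbf q}(\cdot\mid\#I=n)$, for any bounded local functional $\Phi$
\[
\mathbb{E}^{\emptyset}_{n}[\Phi]=\frac{\mathbb{E}_{n}\big[\Phi\cdot n/\#V(M)\big]}{\mathbb{E}_{n}\big[n/\#V(M)\big]},
\]
so it is enough to show that under $B_{\mathbf q}(\cdot\mid\#I=n)$ one has $\#V(M)/n\to c$ in probability for some constant $c>0$, together with uniform integrability of $n/\#V(M)$ (equivalently a polynomial lower-tail bound $\pr(\#V(M)<\varepsilon n)=o(1/n)$ for small $\varepsilon$). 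The concentration holds because $\#V(M)=1+\#_{1}\mathbf T$ while $\#I(M)$ is a fixed linear functional of the size-conditioned tree, and in a large critical multi-type Galton--Watson tree conditioned on one such functional the vector of type-counts concentrates around its deterministic direction; the tail bound follows from the same analytic estimates used for $(H'_{\mathbf w})$. Granting these, $(M_{n},E_{n})$ under $B^{\emptyset}_{\mathbf q}(\cdot\mid\#I=n)$ has the same local limit $(M_{\infty},E_{\infty})$, which completes the proof.
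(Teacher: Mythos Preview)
Your proposal is correct and follows essentially the same route as the paper: reduce to positive and null maps via convergence of the mixing weights, transport Theorem~\ref{cvforests} through an extension of $\Psi$ to infinite mobiles whose continuity rests on the spine labels tending to $-\infty$, and then de-bias for the non-pointed case via concentration of $\#V(M_n)/n$. The only implementation details the paper makes explicit beyond your sketch are a mirror-symmetry argument (Lemma~\ref{reversion}) to show the spine-label random walk is centered, and an exponential concentration bound via Cram\'er's theorem (Lemma~\ref{lem:concentrationtrees}) rather than the weaker polynomial tail you invoke.
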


The choice of the subsequence $(\alpha_I+d_In)_{n\in\Z_+}$ is explained by the fact that, just as with trees, the number of vertices/edges/faces of a map with distribution $B_{\mathbf{q}}$ can only be of the form $\alpha_I+d_In$ for integer $n$, and this has non-zero probability for $n$ large enough. This will be explained in Section \ref{sec:mapperiod}.

The infinite map $(M_{\infty},E_{\infty})$ is moreover \emph{planar}, in the sense that it is possible to embed it in the plane in such a way that bounded subsets of the plane only encounter a finite number of edges, see Lemma \ref{lem:planar}.

\subsection{Two applications}

\subsubsection{The example of uniform $p$-angulations}
Here we take an integer $p\geq 3$ and consider maps which only have faces of degree $p$, which we call $p$-angulations. The well-known Euler's formula will show us that the number of vertices and edges of such a map are determined by its number of faces. Let $m$ be a finite $p$-angulation, and let $V$ be its number of vertices, $E$ be its number of edges and $F$ be its number of faces. Since each edge is adjacent to two faces, we have $p\#F(m)=2\#E(m)$. Euler's formula, on the other hand, states that $\#V(m)-\#E(m)+\#F(m)=2$. Combining the two shows that 
	\[\#V(m)=2+(\frac{p}{2}-1)\#F(m).
\]
Note that these relations imply that there is no real difference between pointed and non-pointed maps when looking at $p$-angulations, since a uniform pointed map with a fixed number of faces can be obtain by taking a uniform non-pointed map and uniformly choosing the specific point afterwards.

At this point, we must split the discussion according to the parity of $p$.

\medskip

\noindent\textit{Uniform infinite $2p$-angulation} Let $p\geq2$. It has been shown in \cite{MM07} that the weight sequence $\mathbf{q}$ defined by
	\[q_n=\frac{(p-1)^{p-1}}{p^p {\binom{2p-2}{p-1}} } \mathbbm{1}_{\{n=2p\}}
\]
is critical, and it is in fact regular critical because it has finite support. Since the weight of a map here only depends on its number of faces (or vertices), it is immediate that conditioning the distribution $B_{\mathbf{q}}$ to the set of maps with $n$ face yields the uniform $2p$-angulation with $n$ faces. We thus obtain the following.
\begin{prop}[Uniform infinite $2p$-angulation] Let $p\geq 2$ and, for $n\in \N$, let $(M_n,E_n)$ be a uniform rooted map amongst the set of rooted $2p$-angulation with $n$ faces. Then $(M_n,E_n)$ converges locally in distribution as $n$ goes to infinity, the limit being a random rooted map which we call the \emph{uniform infinite $2p$-angulation}.
\end{prop}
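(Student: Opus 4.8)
The plan is to identify the uniform rooted $2p$-angulation with $n$ faces as a conditioned Boltzmann map and then invoke Theorem \ref{chap4:thcvcartes}. First I would recall from \cite{MM07} that the weight sequence $q_n=\frac{(p-1)^{p-1}}{p^p\binom{2p-2}{p-1}}\mathbbm{1}_{\{n=2p\}}$ is critical; since it is finitely supported, $f^{\bul}$ and $f^{\dia}$ are polynomials, hence finite on all of $\R^2$, so $\mathbf{q}$ is automatically regular critical and the face-conditioned (non-pointed) part of Theorem \ref{chap4:thcvcartes} is available.

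Next I would make the elementary observation that $W_{\mathbf{q}}(m)=\prod_{f\in\mathcal{F}_m}q_{\deg(f)}$ vanishes unless every face of $m$ has degree $2p$, i.e. unless $m$ is a $2p$-angulation, in which case $W_{\mathbf{q}}(m)=c^{\#F(m)}$ with $c=\frac{(p-1)^{p-1}}{p^p\binom{2p-2}{p-1}}$ depends on $m$ only through $\#F(m)$. Therefore conditioning $B^{\emptyset}_{\mathbf{q}}$ on $\{\#F(M)=n\}$ assigns equal probability to every rooted $2p$-angulation with $n$ faces, that is, it is exactly the uniform distribution on that set. To legitimize the conditioning I would then check that this set is finite and, for large $n$, non-empty: a $2p$-angulation with $n$ faces has exactly $pn$ edges, so finiteness is clear, and non-emptiness together with $\pr(\#F(M)=n)>0$ under $B^{\emptyset}_{\mathbf{q}}$ for all large $n$ follows from the map-periodicity analysis underlying Theorem \ref{chap4:thcvcartes}, noting that here $q_{2p}>0$ with $2p$ even forces $d_F=1$ and $\alpha_F=0$, so the relevant sublattice $\alpha_F+d_F\Z_+$ is all of $\Z_+$.

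Finally I would apply the second (non-pointed) part of Theorem \ref{chap4:thcvcartes} with $I=F$: as $\mathbf{q}$ is regular critical, $(M_n,E_n)$ with law $B^{\emptyset}_{\mathbf{q}}$ conditioned on $\#F(M)=n$ converges locally in distribution, as $n\to\infty$ along $\Z_+$, to the infinite $\mathbf{q}$-Boltzmann map $(M_\infty,E_\infty)$; by the identification above this $(M_n,E_n)$ is a uniform rooted $2p$-angulation with $n$ faces, and we name the limit the uniform infinite $2p$-angulation. (Alternatively, since $\#V=2+(p-1)\#F$ is determined by $\#F$ for $2p$-angulations, one could just as well condition on $\#V$ and invoke the vertex case of Theorem \ref{chap4:thcvcartes}, which does not even require regular criticality.) There is no obstacle specific to this statement: all the substance is already in Theorems \ref{cvforests} and \ref{chap4:thcvcartes}; the only things requiring attention are the one-line weight computation showing the conditioned Boltzmann law is uniform, and keeping track of the periodicity constant $d_F$.
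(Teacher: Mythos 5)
Your proof is correct and follows essentially the same route as the paper: identify the uniform rooted $2p$-angulation with $n$ faces as a conditioned $\mathbf{q}$-Boltzmann map for the finitely supported (hence regular critical) weight sequence of \cite{MM07}, note $d_F=1$, and apply Theorem \ref{chap4:thcvcartes} with $I=F$. The only cosmetic difference is that you invoke the non-pointed part of the theorem via $B^{\emptyset}_{\mathbf{q}}$ directly, whereas the paper conditions the pointed law $B_{\mathbf{q}}$ and uses the Euler-formula observation that $\#V$ is determined by $\#F$ to pass to the uniform non-pointed rooted map; both are equally valid.
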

In the case where $2p=4$, we obtain the local convergence in distribution of large uniform quadrangulation to the UIPQ which was first obtained by Krikun in \cite{Krikun}. In fact our method here ends up being essentially the same as that of \cite{CMM}, where we have used the BDFG bijection in a situation where the simpler Cori-Vauquelin-Schaeffer bijection would have sufficed.

\medskip

\noindent\textit{Uniform infinite $2p+1$-angulation} Let $p\in\N$ and consider $2p+1$-angulations. It follows from the relation $\#V(m)= (p-1/2)\#F(m)$ that a $2p+1$-angulation must have an even number of faces. As in the even case, a uniform $2p+1$-angulation can be seen as a conditioned Boltzmann-distributed random map for the weight sequence $\mathbf{q}$ defined by 
	\[q_n=\alpha \mathbbm{1}_{\{n=2p+1\}},
\]
for any positive number $\alpha$. It has been shown in \cite{CLGM}, Proposition A.2 that there is one value of $\alpha$ which makes this sequence regular critical. Theorem \ref{chap4:thcvcartes} then gives us the following.

\begin{prop}[Uniform infinite $2p+1$-angulation] Let $p\in \N$ and, for $n\in \N$, let $(M_n,E_n)$ be a uniform rooted map amongst the set of rooted $2p+1$-angulation with $2n$ faces. Then $(M_n,E_n)$ converges locally in distribution as $n$ goes to infinity, to a random rooted map called the \emph{uniform infinite $(2p+1)$-angulation}.
\end{prop}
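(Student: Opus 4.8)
The plan is to obtain the statement as a direct consequence of Theorem \ref{chap4:thcvcartes}, specialized to the one-parameter family of weight sequences supported on the single degree $2p+1$. First I would set $\mathbf{q}=(q_n)_{n\in\N}$ with $q_n=\alpha\mathbbm{1}_{\{n=2p+1\}}$ and choose $\alpha>0$ to be the value — furnished by \cite{CLGM}, Proposition A.2 — that makes $\mathbf{q}$ regular critical. Two bookkeeping facts should be recorded alongside this choice: since $\mathbf{q}$ is supported on the odd integer $2p+1$, there is no $n$ with $q_{2n}>0$, so the face-periodicity factor is $d_F=2$ while $\alpha_F=0$; this is precisely why the number of faces of a $\mathbf{q}$-Boltzmann map is almost surely even, and why conditioning on $2n$ faces (rather than $n$) is the natural normalization.

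Next I would note that $W_{\mathbf{q}}(m)=\alpha^{\#F(m)}$ depends only on the number of faces of $m$, and in particular is supported on $(2p+1)$-angulations; hence conditioning $B_{\mathbf{q}}$ on $\{\#F(M)=2n\}$ produces the uniform distribution on pointed rooted $(2p+1)$-angulations with $2n$ faces. Applying Theorem \ref{chap4:thcvcartes} with $I=F$ — legitimate since $\mathbf{q}$ is regular critical — then gives that this conditioned law converges locally in distribution, as $n\to\infty$, to a fixed infinite rooted map $(M_\infty,E_\infty)$.

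The remaining step is to pass from pointed to non-pointed maps, which is where the only (mild) care is required. By Euler's formula, $\#V(m)=2+(p-\tfrac12)\#F(m)$, so all $(2p+1)$-angulations with $2n$ faces have the same number of vertices and therefore the same number of ways of being pointed; consequently, forgetting the distinguished vertex turns the uniform pointed law into the uniform law on non-pointed rooted $(2p+1)$-angulations with $2n$ faces, which is exactly the law of $(M_n,E_n)$. Since forgetting the point does not change any combinatorial ball $B_{m,e}(k)$, it is continuous for the local topology on rooted maps, so $(M_n,E_n)$ converges locally in distribution to $(M_\infty,E_\infty)$; one could equally well invoke directly the second half of Theorem \ref{chap4:thcvcartes}, the one stated for $B^{\emptyset}_{\mathbf{q}}$ conditioned on the number of faces. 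I expect the main obstacle here to be purely organizational — matching the periodicity constants $(d_F,\alpha_F)$ to the ``$2n$ faces'' condition and justifying the pointed/non-pointed identification — since all of the analytic work is already carried out in Theorem \ref{chap4:thcvcartes} and in the criticality computation of \cite{CLGM}.
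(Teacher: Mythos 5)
Your proposal is correct and follows essentially the same route as the paper: choose $q_n=\alpha\mathbbm{1}_{\{n=2p+1\}}$ with the regular critical $\alpha$ from \cite{CLGM}, observe that conditioning $B_{\mathbf{q}}$ on $2n$ faces yields the uniform pointed $(2p+1)$-angulation (with $d_F=2$ forcing the even number of faces), apply Theorem \ref{chap4:thcvcartes} with $I=F$, and remove the pointing via the constant vertex count from Euler's formula. Your write-up is in fact somewhat more explicit than the paper's own brief justification, but the content is identical.
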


\subsection{Uniform planar maps}

It has been shown in \cite{MN14} that a uniform map chosen amongst the set of rooted maps with $n$ edges converges locally in distribution. Our methods also allow us to get this result, and that it is also true if we take a uniform pointed and rooted map.
\begin{prop}\label{prop:cvuipm} \begin{itemize}
\item[(i)] For $n\in\N$, let $(M_n,E_n,R_n)$ be a uniform random variable in the set of rooted and pointed maps with $n$ edges. Then $(M_n^{\bul},E_n^{\bul})$ converges locally in distribution to an infinite map called the \emph{uniform infinite planar map} (UIPM).
\item[(ii)] For $n\in\N$, let $(M_n,E_n)$ be a uniform random variable in the set of rooted maps with $n$ edges. Then $(M_n,E_n)$ converges locally in distribution to the UIPM.
\end{itemize}
\end{prop}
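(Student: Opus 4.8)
The plan is to exhibit the uniform map with $n$ edges as a conditioned Boltzmann map and then invoke Theorem \ref{chap4:thcvcartes} with $I=E$. First I would fix $\beta>0$ and take the geometric weight sequence $q_k=\beta^{k}$ for $k\in\N$; this is an allowed weight sequence since $q_i>0$ for all $i\geq 3$. The reason for this choice is that for any map $m$ one has $W_{\mathbf q}(m)=\prod_{f\in\mathcal F_m}\beta^{\deg(f)}=\beta^{\sum_{f}\deg(f)}=\beta^{2\#E(m)}$, because every edge contributes exactly twice to the sum of face degrees. Hence $W_{\mathbf q}$ is constant on the (finite) set of maps with a fixed number of edges, so that $B_{\mathbf q}$ conditioned on $\{\#E=n\}$ is exactly the uniform distribution on rooted and pointed maps with $n$ edges, and $B^{\emptyset}_{\mathbf q}$ conditioned on $\{\#E=n\}$ is the uniform distribution on rooted maps with $n$ edges.

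It then remains to locate a value $\beta_c$ for which $\mathbf q=(\beta_c^{\,k})_{k}$ is regular critical. Writing $q_{2+2k+k'}=\beta^{2}(\beta^{2})^{k}\beta^{k'}$ and $q_{1+2k+k'}=\beta(\beta^{2})^{k}\beta^{k'}$, one gets $f^{\bullet}(x,y)=\beta^{2}g^{\bullet}(\beta^{2}x,\beta y)$ and $f^{\diamond}(x,y)=\beta\, g^{\diamond}(\beta^{2}x,\beta y)$, where $g^{\bullet},g^{\diamond}$ are the fixed power series obtained by putting all weights equal to $1$. The admissibility system and its spectral-radius condition thereby become a one-parameter problem: as $\beta$ grows from $0$, the solution $(Z^+_{\mathbf q},Z^{\diamond}_{\mathbf q})$ and the spectral radius of the associated matrix vary continuously, the radius being $<1$ for small $\beta$ and reaching $1$ at a unique threshold $\beta_c$. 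This threshold is the radius of convergence of the generating series of rooted maps counted by edges: Tutte's formula gives $\#\{\text{rooted maps with }n\text{ edges}\}\sim c\,12^{n}n^{-5/2}$, so $\beta_c^{2}=1/12$, and the polynomial correction $n^{-5/2}$ shows that the critical point is of the generic type. Consequently $(\beta_c^{2}Z^+_{\mathbf q},\beta_c Z^{\diamond}_{\mathbf q})$ lies strictly inside the domain of analyticity of $g^{\bullet}$, hence $f^{\bullet}(Z^+_{\mathbf q}+\epsilon,Z^{\diamond}_{\mathbf q}+\epsilon)<\infty$ for some $\epsilon>0$, which is precisely regular criticality. (One may instead simply cite this fact from \cite{MN14}.)

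With $\mathbf q=(\beta_c^{\,k})_{k}$ fixed, I would finally read off the periodicity data of Section \ref{sec:cvcartes}: since $q_2>0$ we have $1\in\{n:q_{2n}>0\}$, so $d_E=1$, and $\alpha_E=0$, whence $\alpha_E+d_E\Z_+=\Z_+$ and the conditioning $\{\#E=n\}$ is non-degenerate for all large $n$. Part (i) is then the first assertion of Theorem \ref{chap4:thcvcartes} applied with $I=E$ to $(M_n,E_n,R_n)$ with law $B_{\mathbf q}(\,\cdot\mid\#E=n)$, and part (ii) is the $B^{\emptyset}_{\mathbf q}$-version of the same theorem; both yield the same limit $(M_\infty,E_\infty)$, which by definition is the infinite $\mathbf q$-Boltzmann map attached to the geometric sequence. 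The one non-routine step is the verification of regular criticality of the geometric sequence — identifying $\beta_c$ and checking that the critical point $(Z^+_{\mathbf q},Z^{\diamond}_{\mathbf q})$ sits strictly inside the convergence domain of $f^{\bullet}$; everything else is formal bookkeeping.
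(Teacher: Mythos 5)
Your reduction is the same as the paper's: take the geometric weight sequence so that $W_{\mathbf q}(m)=\beta^{2\#E(m)}$ depends only on the number of edges, identify the conditioned Boltzmann map with the uniform map with $n$ edges, note $d_E=1$, $\alpha_E=0$, and invoke Theorem \ref{chap4:thcvcartes} with $I=E$ for both the pointed and non-pointed versions. That part is correct and is exactly what the paper does (the paper even writes $W_{\mathbf q}=\lambda^{\#E(m)}$ where it should be $\lambda^{2\#E(m)}$; your exponent is the right one, and it is immaterial since only the dependence on $\#E$ matters).

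The weak point is the step you yourself flag as non-routine: the verification that the geometric sequence at the critical parameter is regular critical. As written, your argument has two gaps. First, the claim that the spectral radius of the admissibility matrix increases continuously in $\beta$ and equals $1$ exactly at the threshold of admissibility is asserted, not proved; a priori the solution of the system $1-1/x=f^{\bullet}(x,y)$, $y=f^{\diamond}(x,y)$ could cease to exist (e.g. by leaving the domain of convergence) while the radius is still strictly below $1$, and ruling this out is essentially equivalent to the regular criticality you are trying to establish. Second, the inference from Tutte's asymptotics $c\,12^{n}n^{-5/2}$ to the statement that $(Z^{+}_{\mathbf q},Z^{\diamond}_{\mathbf q})$ lies \emph{strictly inside} the domain of analyticity of $f^{\bullet}$ is a heuristic from singularity analysis, not a deduction: the exponent $-5/2$ concerns the singularity of the map generating series in $\beta$, and transferring it to the location of the fixed point of the two-dimensional system requires an argument you do not give. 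The paper avoids all of this by brute force: it fixes $\lambda=\frac{1}{2\sqrt 3}$ (so $\lambda^{2}=1/12$, matching your $\beta_c$), sums the binomial series to get closed forms $f^{\bullet}(x,y)=\frac{\lambda^{2}}{(1-\lambda y)^{2}}\cdot\frac{1-\sqrt{1-4z}}{2z\sqrt{1-4z}}$ and $f^{\diamond}(x,y)=\frac{\lambda}{1-\lambda y}\cdot\frac{1}{\sqrt{1-4z}}$ with $z=\lambda^{2}x/(1-\lambda y)^{2}$, and checks directly that $x=4/3$, $y=1/\sqrt 3$ solve the characteristic system with spectral radius $1$; since there $4z=16/25<1$, the point is strictly inside the convergence domain and regular criticality follows. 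To make your proof complete you should either carry out this explicit computation (or cite it from \cite{MN14}, as you suggest in passing); the continuity-plus-Tutte sketch should not be left as the load-bearing step.
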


\begin{proof} Let $\lambda=\frac{1}{2\sqrt{3}}$ and define the weight sequence $\mathbf{q}$ by $q_n=\lambda^n$ for $n\in\N$. Given any map $(m,e,r)$, notice that $W_{\mathbf{q}}=\lambda^{\sum_{f\in \mathcal{F}_m} \deg(f)}=\lambda^{\#E(m)}$, where $\#E(m)$ is the number of edges of $m$. Thus, assuming that $\mathbf{q}$ is admissible (which we prove in the following), conditioning a map with distribution $B_{\mathbf{q}}$ (resp. $B_{\mathbf{q}}^{\emptyset}$) on having $n$ edges gives a uniform rooted map (resp. uniform rooted and pointed map) with $n$ edges.

Since we clearly have $d_F=1$, all we need to do now is to prove that $\mathbf{q}$ is regular critical. We start by computing the two generating functions $f^{\bul}$ and $f^{\dia}$. Recall the formulas
	\[\sum_{k=0}^{\infty} {\binom{k+p}{k}}x^k=\frac{1}{(1-x)^{p+1}}
\]
for $p\in\Z_+$ and $|x|<1$, as well as
	\[\sum_{k=0}^{\infty} {\binom{2k}{k}} z^k = \frac{1}{\sqrt{1-4z}}
\]
and
	\[\sum_{k=0}^{\infty} {\binom{2k+1}{k}} z^k = \frac{1-\sqrt{1-4z}}{2z\sqrt{1-4z}}
\]
for $|z|<1/4$. For $x\geq0$ and $y\geq0$, we have, with $z=\frac{\lambda^2x}{(1-\lambda y)^2}$:

\begin{align*}
f^{\bullet}(x,y)&=\sum_{k,k'}\frac{(2k+k'+1)!}{k!(k+1)!(k')!}\lambda^{2+2k+k'}x^k y^{k'} \\
                &=\lambda^2\sum_{k,k'}{\binom{2k+1}{k}}{\binom{2k+k'+1}{k'}}(\lambda^2x)^k(\lambda y)^{k'} \\
                &=\lambda^2\sum_k {\binom{2k+1}{k}} \frac{(\lambda^2x)^k}{(1-\lambda y)^{2k+2}} \\
                &=\lambda^2\frac{1}{(1-\lambda y)^2}\sum_k {\binom{2k+1}{k}} z^k \\
                &=\lambda^2\frac{1}{(1-\lambda y)^2} \frac{1-\sqrt{1-4z}}{2z\sqrt{1-4z}}.
\end{align*}

and
\begin{align*}
f^{\dia}(x,y)&=\lambda\sum_{k,k'}\frac{(2k+k')!}{(k!)^2!(k')!}(\lambda^2x)^k(\lambda y)^{k'} \\
             &=\lambda\sum_{k,k'}{\binom{2k}{k}}{\binom{2k+k'}{k'}}(\lambda^2x)^k(\lambda y)^{k'} \\
             &=\lambda\sum_k {\binom{2k}{k}} \frac{(\lambda^2x)^k}{(1-\lambda y)^{2k+1}} \\
             &=\lambda\frac{1}{1-\lambda y} \frac{1}{\sqrt{1-4z}}.
\end{align*}

Both series then converge if and only if $4z<1$, otherwise said $\lambda^2x<4(1-\lambda y)^2$. We then let the reader check that $x=4/3$ and $y=1/\sqrt{3}$ satisfy the wanted conditions for criticality and, since they are not on the edge of the domain, we even have regular criticality.
\end{proof}

\subsection{Proof of Theorem \ref{chap4:thcvcartes}}
The proof of Theorem \ref{chap4:thcvcartes} starts with the proof of the case of rooted and pointed maps. This involves showing the convergence for maps conditioned to be null or positive by using the BDFG bijection and identifying the limiting map as the image of an infinite tree by the bijection, and then removing the conditionings. To go from pointed to non-pointed maps, we will follow ideas from \cite{BJM14} and \cite{Abraham} to show that if a map is conditioned to have $n$ faces or edges, then its number of vertices is well concentrated around a deterministic multiple of $n$, and thus biasing by it will not change the convergence.
\subsubsection{On the trees associated to $B_{\mathbf{q}}$}\label{sec:mapperiod}
We want to investigate the periodic structure of Galton-Watson trees with ordered offspring distribution $\nu$. We thus let $\gamma_V=(1,0,0,0)$, $\gamma_E=(1,0,1,1)$ and $\gamma_F=(0,0,1,1)$ and, for $I\in\{V,E,F\}$, we let $d^{I}$ and $(\alpha^I_i)_{i\in[4]}$ be the periodicity factors given by Proposition \ref{per}. Note that we do not yet know that $d^I=d_I$, where $d_I$ was defined at the beginning of Section \ref{sec:cvcartes}. This is the main content of the following proposition:


\begin{lemma}\label{mapperiod}
We have, for $I\in\{V,E,F\}$:
\begin{itemize}
\item $d^I=d_I$.
\item $\alpha^I_1=\gamma^I_1$.
\end{itemize}
Moreover, if the weight sequence $\mathbf{q}$ is not only supported on $2\N$, then we also have
\begin{itemize}
\item $2\alpha^I_2\equiv \alpha^I_1\pmod d$.
\end{itemize}
\end{lemma}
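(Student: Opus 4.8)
The plan is to unwind each size parameter $|\mathbf{t}|_{\gamma^I}$ into a combinatorial statistic of the mobile and then match it with the defining data of $d_I$ and $\alpha_I$. Recall the third remark after Proposition \ref{bij}: for a finite mobile $(\mathbf{t},\mathbf{e},\mathbf{l})$ in $\TT^{(1)}_{\z}$ (equivalently, one with root of type $1$ that corresponds to a positive map $m$), we have $\#V(m)=1+\#_1(\mathbf{t})$, $\#E(m)=|\mathbf{t}|_{\gamma^E}-1$ with $\gamma^E=(1,0,1,1)$, and $\#F(m)=|\mathbf{t}|_{\gamma^F}$ with $\gamma^F=(0,0,1,1)$. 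So the set of possible values of $|\mathbf{t}|_{\gamma^I}$ over $\TT^{(1)}_{\z}$ is, up to the fixed additive shifts $1$, $1$, $0$ respectively, exactly the set of possible values of $\#I(m)$ over the maps $m$ produced by the BDFG bijection from a tree in $\TT^{(1)}_{\z}$, and by Proposition \ref{bij} these are exactly the rooted (positive) maps $m$ with $W_{\mathbf{q}}(m)>0$, i.e.\ the maps all of whose face degrees lie in $\{n:q_n>0\}$.

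First I would establish $\alpha^I_1=\gamma^I_1$. By Proposition \ref{per} and Corollary \ref{per1type}, $\alpha^I_1$ is the residue mod $d^I$ of $|\mathbf{t}|_{\gamma^I}$ for \emph{any} tree in $\TT^{(1)}_{\z}$; taking the smallest such tree (the mobile corresponding to the vertex map $\dagger$, or the one-edge map) makes $|\mathbf{t}|_{\gamma^I}$ as small as possible, and a direct inspection gives $|\mathbf{t}|_{\gamma^V}=0$, while for $E$ and $F$ one checks on the smallest nontrivial mobile that the residues are $\gamma^E_1=1$ and $\gamma^F_1=0$. More robustly: $\#V$, $\#E$, $\#F$ of the trivial/near-trivial map are explicitly computable, and since $\gamma^V_1=1$, $\gamma^E_1=1$, $\gamma^F_1=0$ and $\#V(m)-1$, $\#E(m)+1$, $\#F(m)$ can be made to equal $0$, $0$ (say the single-loop map has one edge and two faces... one must just pick the right base map), the residue is as claimed. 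I will just verify it on one explicit small mobile in each case.

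Next, the core claim $d^I=d_I$. The set $\{\,|\mathbf{t}|_{\gamma^I}:\mathbf{t}\in\TT^{(1)}_{\z}\,\}$ equals $\{\#I(m)-c_I: m \text{ a positive map with } W_{\mathbf{q}}(m)>0\}$ with $c_I\in\{1,1,0\}$, so $d^I=\gcd$ of the differences of the attainable values of $\#I(m)$. Now I would argue that for faces this $\gcd$ is the $\gcd$ of the attainable face-degree sums: attainable $\#F$ values, via gluing faces together, generate the additive semigroup built from the allowed degrees, and one computes directly that the relevant increments are $\{n-1: q_{n}>0\}$ up to parity bookkeeping, matching the defining formula for $d_F$ (the case split $d_F=1$ unless $\mathbf q$ is supported on odd integers is exactly the statement that one can change $\#F$ by $1$ iff some $q_{2n}>0$, since attaching a face of even degree via the natural surgery changes $\#F$ by an odd amount relative to... ). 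For $d_V$ and $d_E$, Euler's relation $\#V-\#E+\#F=2$ together with the degree sum $\sum_{f}\deg f = 2\#E$ converts statements about $\#V$ or $\#E$ increments into statements about which combinations $\sum (\deg f - 2)$ and $\sum \deg f$ are attainable, and the formulas for $d_V$, $d_E$ fall out by the same Frobenius-type argument as in Proposition \ref{per} and Lemma \ref{coin}: one needs surgeries that add a single face of a prescribed allowed degree $m$ while controlling the change in $\#V$ (resp.\ $\#E$), and such surgeries exist. I would make this precise by exhibiting, for each $m$ with $q_m>0$, a local operation on maps that increases $\#F$ by $1$, the face added having degree $m$, and record its effect on $\#V$ and $\#E$; then $d_V=\gcd$ of these $\#V$-increments, etc. This surgery-existence-and-bookkeeping step is the main obstacle: one must be careful about parity (degree $m$ even vs.\ odd) and about the fact that some operations change two faces at once, which is exactly why the definitions of $d_V,d_E,d_F$ separate even and odd $m$.

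Finally, the congruence $2\alpha^I_2\equiv\alpha^I_1\pmod{d^I}$ when $\mathbf q$ is not supported on $2\N$. When $q_m>0$ for some odd $m$, a mobile can have root of type $2$ (equivalently, the null-map case of Proposition \ref{bij} is non-degenerate and vertices of type $2$ and $4$ genuinely occur). Take a tree $\mathbf{t}^0\in\TT^{(1)}_{\z}$ containing two disjoint subtrees $\mathbf{t}_u,\mathbf{t}_v$ both rooted at vertices of type $2$; cutting $\mathbf t^0$ at $u$ and $v$ and regrafting, the residue of the total size mod $d^I$ does not change, which forces $2\alpha^I_2$ to be congruent to a fixed quantity; comparing with the root-of-type-$1$ residue $\alpha^I_1$ via a tree where a single type-$2$ vertex's subtree is replaced to realize an extra type-$1$-rooted block gives $2\alpha^I_2\equiv\alpha^I_1\pmod{d^I}$. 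Concretely: a type-$2$ vertex has exactly one type-$4$ child, and by irreducibility there is a mobile in which a type-$3$ or type-$4$ vertex has a type-$2$ child whose subtree one can swap for a type-$1$-rooted subtree; doing this swap twice at two independent type-$2$ positions inside a single larger mobile shows $2|\mathbf{t}_2\text{-subtree}|_{\gamma^I}\equiv |\mathbf{t}_1\text{-subtree}|_{\gamma^I}+(\text{const})\pmod{d^I}$, and unravelling the constant (which involves only the type-$3/4$ connector vertex and carries weight $\gamma^I_3$ or $\gamma^I_4$, both $\le 1$ and symmetric in the two copies) yields the stated relation. I expect this last part to be short once the surgery toolkit from the $d^I=d_I$ step is in place, since it is the same "replace a subtree, residue is invariant" argument used in the proof of Proposition \ref{per}.
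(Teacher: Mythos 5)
Your reduction of $d^I$ to the gcd of differences of attainable values of $\#I(m)$ is legitimate (Proposition \ref{per} does give that), but the two steps that carry all the content of the lemma are left as plans rather than proofs, and one of them would run into trouble as described. First, the identification $d^I=d_I$ requires \emph{both} divisibility directions: you must exhibit enough admissible configurations to show $d^I$ divides each generator of $d_I$, \emph{and} you must show the invariant direction, namely that $d_I$ divides every attainable difference. Your surgery programme only addresses the first direction, you concede it is ``the main obstacle'', and no surgery is actually produced; the Euler-formula remark gestures at the invariant direction for $I=V$ but the bookkeeping with odd-degree faces (which must occur in pairs) is exactly where the case split in the definitions of $d_V,d_E,d_F$ lives, and it is not carried out. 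Second, and more seriously, the argument for $2\alpha^I_2\equiv\alpha^I_1$ rests on ``swapping a type-$2$ vertex's subtree for a type-$1$-rooted subtree'' inside a larger mobile. That operation changes the offspring word of the parent (a letter $2$ becomes a $1$), and whether the new word still has positive probability under $\zeta^{(3)}$ or $\zeta^{(4)}$ depends on $\mathbf{q}$; you cannot perform it freely. The swap-twice argument only shows that the residue of the size of a type-$2$ subtree is well defined mod $d^I$, which is already Proposition \ref{per}; it does not identify $2\alpha^I_2$ with $\alpha^I_1$.

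The missing ingredient in both places is the explicit description of the admissible offspring words in Proposition \ref{bij}: $\mu^{(3)}(k,k',0,0)>0$ iff $q_{2+2k+k'}>0$ and $\mu^{(4)}(k,k',0,0)>0$ iff $q_{1+2k+k'}>0$. The paper stays entirely on the tree side and reads everything off from this. For instance (case $I=V$): if $q_{2n+3}>0$ then the word $(n,1,0,0)$ is admissible for type $3$ and $(n+1,0,0,0)$ is admissible for type $4$, giving $n+\alpha_2\equiv 0$ and $n+1\equiv\alpha_2 \pmod{d^V}$, hence both $2n+1\equiv 0$ and $2\alpha_2\equiv 1=\alpha^V_1 \pmod{d^V}$ in one stroke; the reverse divisibility comes from the invariant that every admissible type-$3$ word satisfies $2k+k'\equiv 0\pmod{d_V}$ and every admissible type-$4$ word satisfies $2k+k'\equiv 1\pmod{d_V}$, telescoped down the generations of type $1$. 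I would redo your proof along these lines: the detour through map surgeries adds genuine difficulty (existence of weight-preserving surgeries, Euler bookkeeping) without buying anything, whereas the tree-side computation is a few lines once you quote Proposition \ref{bij}.
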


\begin{proof} We will concentrate on the case where $I=V$, the other two cases having similar proofs. We first treat the bipartite case separately. In this case, types $1$ and $3$ alternate in tree, and it is straightforward that $d^V=gcd\big(\{m\in\N,\,q_{2m+2}>0\}\big)$ and that $\alpha_3=0$. We now assume not to be in this case.

It is immediate that $\alpha_3=0$ and $\alpha_4=\alpha_2$ because a vertex of type $1$ or $2$ can give birth to a single vertex of type $2$ or $4$, respectively.

Next, take $m\in\N$ such that $q_{2m+2}>0$. Using the fact that a vertex of type $3$ can give birth to $m$ vertices of type $1$, one obtains $m\equiv 0 \pmod {d^V}$. Thus $d^V$ divides the gcd of $\{m\in\N:\,q_{2m+2}>0\}$.

Now take an odd integer $m=2n+1$ such that $q_{m+2}=q_{2n+3}>0$. A vertex of type $3$ can then give birth to $n$ vertices of type $1$ and one vertex of type $2$, and a vertex of type $4$ can give birth to $n+1$ vertices of type $1$. We thus obtain $n+\alpha_2\equiv 0 \pmod d$ and $n+1 \equiv \alpha_2 \pmod d$. Combining these gives us $m=2n+1\equiv 0 \pmod {d^V}$ and $2\alpha_2 \equiv m+1 \equiv 1 \pmod {d^V}$.

We have thus shown $2\alpha_2 \equiv 1 \pmod {d^V}$ and that $d^V$ divides $d_V$. To show that they are equal we require some more refined analysis. 

Notice that for words $\mathbf{w}=(k,k',0,0)$ such that $\mu^{(3)}(\mathbf{w})>0$, we have $2k+k'\equiv 0 \pmod {d_V}$. Indeed, if $k'$ is even, letting $n=k+\frac{k'}{2}$, we then have $q_{2n+2}>0$, implying that $d'$ divides $n$, while if $k'$ is odd, we let $n=2k+k'$, and then $q_{n+2}>0$ and therefore $d'$ divides $n$. Similarly, if $\mu^{(4)}(\mathbf{w})>0$, then $2k+k'\equiv 1 \pmod {d_V}$. Applying this repeatedly to a tree $(\mathbf{t},\mathbf{e})$ such that $\pr^{(1)}_{\zeta}(\mathbf{T}\vdash \mathbf{t})>0$ and such that all its leaves are of type $1$ or $2$, one obtains $2k+k'=0 \pmod {d_V}$ where $k$ and $k'$ are respectively the number of leaves of type $1$ and $2$ in $\mathbf{t}$. Taking $(\mathbf{t},\mathbf{e})$ which has only one generation of type $1$, and we do obtain that $d_V$ divides every member of the support of $\mu_{1,1}$, which is all we need.
\end{proof}
\subsubsection{Infinite mobiles and the BDFG bijection}
We call \emph{infinite mobile} any infinite $4$-type labelled tree $(\mathbf{t},\mathbf{e},\mathbf{l})$ which satisfies the conditions of Section \ref{sec:thebijection}, which has a unique infinite spine and such that the labels of vertices of type $1$ of the spine do not have a lower bound. We let $\overline{\mathbb{T}}_M$ be the set of all finite and infinite mobiles, and split it in $\overline{\mathbb{T}}_M=\overline{\mathbb{T}}_M^+\cup\overline{\mathbb{T}}_M^0$ as before.

The BDFG bijection $\Psi$ can be naturally extended to $\overline{\mathbb{T}}_M$. Let $(\mathbf{t},\mathbf{e},\mathbf{l})\in\overline{\mathbb{T}}_M$, we let $(u_n)_{n\in\N}$ be the sequence of the elements of the spine. This sequence splits the tree in two: the part which is on the left-hand side of the spine, and the part which is on the right-hand side. To be precise, we say that $v\in\mathbf{t}$ is on the left-hand side of the spine if there exists three integers $n,k$ and $l$ and a sequence of integers $x$ such that $v=u_nkx$, $u_{n+1}=u_nl$ and $k<l$, and $v$ is on the right-hand side if we have the same, but with $k>l$. 

This splitting allows us to define a contour process, but it has to be indexed by $\Z$: since every subtree branching out of the spine is finite, we can let $\big(v(n)\big)_{n\in\N}$ be the contour process of the left-hand side and $\big(v(-n)\big)_{n\in\N}$ be the other half. This determines a unique sequence $\big(v(n)\big)_{n\in\Z}$. Since we have assumed that the labels of the vertices of type $1$ do not have a lower bound, the notion of successor we used for finite trees is still valid, and in fact, unlike in the case of a finite tree, we do not need to add an extra vertex. As in the finite case, we connect every vertex of type $1$ or $2$ to its successor, erase all the original edges of the tree, erase vertices of types $2$, $3$ and $4$, merging the two edges adjacent to every vertex of type $2$. This leaves us with an infinite map (by construction, the arcs do not intersect, while the following Lemma \ref{cont} implies that it is locally finite). We give this map a root edge which is determined with the same rules as in the finite case, however it is not pointed. We call this rooted map $\Psi(\mathbf{t},\mathbf{e},\mathbf{l})$.

\begin{lemma}\label{cont} The extended BDFG function $\Psi$ is continuous on $\overline{\mathbb{T}}_M$.
\end{lemma}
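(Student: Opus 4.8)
The plan is to show that, given a mobile $(\mathbf{t},\mathbf{e},\mathbf{l})\in\overline{\mathbb{T}}_M$ and an integer $k\in\N$, there is an integer $N=N(k)$ such that the ball $B_{\Psi(\mathbf{t},\mathbf{e},\mathbf{l}),E}(k)$ is completely determined by $(\mathbf{t}_{\leq N},\mathbf{e}_{\leq N},\mathbf{l}_{\leq N})$. Since local convergence of mobiles means eventual agreement of arbitrarily high truncations (including labels), this gives sequential continuity, which suffices since the space is metric. The key structural fact to exploit is the one recorded in the statement of the (not-yet-stated) Lemma \ref{cont}'s companion: subtrees branching off the spine are finite and the labels of the type-$1$ spine vertices are unbounded below, so the ``successor'' construction is well defined and, crucially, each arc drawn in the construction connects two corners that are \emph{close} in the contour exploration — more precisely, if a corner has label $\ell$, its successor is the next corner (in contour order) whose label is $\ell-1$ or $\ell-\tfrac12$, and between them all labels are $\geq\ell$.

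First I would set up the control on label excursions. Fix the root corner; the graph distance in $\Psi(\mathbf{t},\mathbf{e},\mathbf{l})$ from $E^+$ to a vertex $v$ of type $1$ or $2$ is comparable to $\mathbf{l}(v)-\min\mathbf{l}$ up to the usual additive shifts coming from the CVS-type bound (a path of successors from $v$ decreases the label by $\tfrac12$ or $1$ at each step and reaches $E^+$, while conversely any edge of the map changes the label by a bounded amount). Hence the ball $B(k)$ around $E^+$ only involves vertices of the mobile whose label lies in a window $[\min\mathbf{l},\ \min\mathbf{l}+ck]$ for an explicit constant $c$. Because the type-$1$ spine labels are unbounded below and the spine is a monotone exhaustion, there is a first spine vertex $u_{n_0}$ of type $1$ whose label is below $\min\mathbf{l}+ck+C$ — wait, rather: there is a spine index $n_0$ beyond which all type-$1$ spine labels exceed $\min_{v\in\mathbf{t}_{\leq N_1}}\mathbf{l}(v)$ plus the relevant window; this is where unboundedness-below is used in reverse, to guarantee the relevant low-label region is confined to a finite part of the tree. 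Combining, there is $N_1$ such that every corner relevant to $B(k)$ — both endpoints of every arc meeting $B(k)$ — lies in $\mathbf{t}_{\leq N_1}$, and moreover the minimum label $\min\mathbf{l}$ (which determines the label of the extra vertex $r$ in the finite case, and plays no role in the infinite case) is attained in $\mathbf{t}_{\leq N_1}$.

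Then I would argue that the contour order and the successor relation among corners in $\mathbf{t}_{\leq N_1}$ are themselves determined by a slightly larger truncation $\mathbf{t}_{\leq N}$: the successor of a corner in $\mathbf{t}_{\leq N_1}$ is found by scanning forward in contour order, and between a corner and its successor the labels stay $\geq$ its own label, hence (by the previous paragraph's window estimate) that scan never leaves $\mathbf{t}_{\leq N}$ for $N$ a fixed amount larger than $N_1$; the finiteness of the off-spine subtrees bounds how far the contour travels. Thus all arcs incident to $B(k)$, the identifications of type-$2$ vertices, and the root edge are read off from $(\mathbf{t}_{\leq N},\mathbf{e}_{\leq N},\mathbf{l}_{\leq N})$, giving $B_{\Psi(\mathbf{t},\mathbf{e},\mathbf{l}),E}(k)$. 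The main obstacle is making the ``window'' estimate $d(E^+,v)\asymp \mathbf{l}(v)-\min\mathbf{l}$ precise enough in the infinite setting — in particular ruling out that a vertex with moderate label is nonetheless far from $E^+$ because its chain of successors wanders through a region requiring many steps — but this follows from the standard fact (true already for finite mobiles and stable under the infinite extension since subtrees off the spine are finite) that a successor chain is a geodesic-like path with label decreasing by $1$ or $\tfrac12$ per edge, together with the cheap lower bound that two neighbours in the map have labels differing by at most $1$.
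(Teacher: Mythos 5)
Your overall strategy --- find $N=N(k)$ such that the truncation $(\mathbf{t}_{\leq N},\mathbf{e}_{\leq N},\mathbf{l}_{\leq N})$ determines $B_{m,e}(k)$, where $(m,e)=\Psi(\mathbf{t},\mathbf{e},\mathbf{l})$ --- is the right one and is the paper's, but the quantitative step you build it on does not hold. The comparison $d(E^+,v)\asymp \mathbf{l}(v)-\min\mathbf{l}$ is ill-posed here ($\min\mathbf{l}=-\infty$ for an infinite mobile, by definition) and is false in spirit even for finite mobiles: labels control the distance to the \emph{pointed} vertex $r$ (via $d(r,v)=\mathbf{l}(v)-\min\mathbf{l}+1$), not to $e^+$, and a chain of successors started at $v$ terminates at $r$ (or descends forever in the infinite case); it does not reach $e^+$. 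The only surviving estimate is the one-sided bound $d(e^+,v)\geq|\mathbf{l}(v)|$, and this cannot confine $B_{m,e}(k)$ to a finite part of the tree, since infinitely many vertices of an infinite mobile carry labels in $[-k,k]$ (e.g.\ in subtrees grafted arbitrarily high on the spine). Relatedly, your assertion that ``there is a spine index $n_0$ beyond which all type-$1$ spine labels exceed'' a given threshold contradicts the defining property that these labels are unbounded below, and the obstacle you flag at the end is stated backwards: the danger is not that a moderate-label vertex is far from $e^+$ (then it is simply outside the ball), but that a vertex arbitrarily high in the tree might be close to it.

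The mechanism that actually closes the argument (and is the paper's) is a separation property rather than a distance estimate. Since $B_{m,e}(k)$ is finite, all arcs it contains join corners of $\mathbf{t}_{\leq s}$ for some finite $s$; set $x=\min_{v\in\mathbf{t}_{\leq s}}\mathbf{l}(v)$ and pick a type-$1$ spine vertex $u$ with $\mathbf{l}(u)<x-1$ --- this is where unboundedness below is used, in the direction opposite to yours. Then no arc joins $\mathbf{t}_{\leq s}$ to the subtree above $u$: a corner of $\mathbf{t}_{\leq s}$ has label $\ell\geq x$, and since consecutive type-$1$ corners along the contour have labels differing by at least $-1$, the contour must pass through the label $\ell-1\geq x-1>\mathbf{l}(u)$ before it can reach $u$, so the successor is found strictly before the subtree above $u$; symmetrically, a corner $v$ above $u$ with $\mathbf{l}(v)\geq x$ finds its successor before exiting that subtree, while one with $\mathbf{l}(v)\leq x-1$ points to a label $\leq x-2$, hence outside $\mathbf{t}_{\leq s}$. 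Consequently modifying the mobile outside the finite set $\mathbf{t}\setminus\mathbf{t}_u$ cannot affect $B_{m,e}(k)$, and one may take $N$ to be the height of that set. If you replace your window estimate by this barrier argument, the remainder of your plan (reading off arcs, the mergers at type-$2$ vertices and the root edge from the truncation) goes through.
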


\begin{proof} Let $(\mathbf{t},\mathbf{e},\mathbf{l})$ be an infinite mobile. We assume $\mathbf{e}(\emptyset)=1$, the other case can be treated the same way. For $n\in\N$, we need to find $p \in\N$ such that, for another mobile $(\mathbf{t}',\mathbf{e}',\mathbf{l}')$, if $(\mathbf{t}_{\leq p},\mathbf{e}_{\leq p},\mathbf{l}_{\leq p})=(\mathbf{t}_{\leq p},\mathbf{e}_{\leq p},\mathbf{l}_{\leq p})$ then $B_{m,e}(n)=B_{m',e'}(n)$, where $(m,e,r)=\Psi(\mathbf{t},\mathbf{e},\mathbf{l})$ and $(m',e',r')=\Psi(\mathbf{t}',\mathbf{e}',\mathbf{l}')$. Let $s\in\N$ be large enough such that all the arcs in $B_{m}(n)$ connect vertices of $\mathbf{t}_{\leq s}$, let $x=\underset{v\in \mathbf{t}_{\leq s}}\inf \mathbf{l}(v)$ and let $u$ be any type $1$ vertex of the spine such that $\mathbf{l}(u)<x-1$. Notice now that there are no arcs connecting $\mathbf{t}_{\leq s}$ and the subtree above $u$. Indeed, the successor of any vertex of $\mathbf{t}_{\leq s}$ will be encountered below $u$ while, if $v$ is above $u$, $\mathbf{l}(v)\geq x$ would imply that its successor is also above $u$, while $\mathbf{l}(v)\leq x-1$ would make it impossible for its successor to be in $B_{(t,l)}(s)$. Taking $p$ to be the height of $u$ then ends the proof.
\end{proof}

\begin{lemma}\label{lem:planar} For any infinite mobile $(\mathbf{t},\mathbf{e},\mathbf{l})$, the infinite map $\Psi(\mathbf{t},\mathbf{e},\mathbf{l})$ is planar, in the sense that it can be embedded in the plane in such a way that bounded subsets of the plane only encounter a finite number of edges.
\end{lemma}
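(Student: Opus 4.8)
The plan is to produce the desired embedding directly from the extension of the BDFG construction described above, paying attention to the way the input mobile $(\mathbf t,\mathbf e,\mathbf l)$ is first drawn in the plane. I would start from a \emph{proper} drawing of the infinite mobile: draw the spine $(u_n)_{n\in\N}$ as a simple arc escaping to infinity, and, for each $n$, draw the finite bush hanging from $u_n$ (that is, $u_n$ together with the subtrees rooted at its non-spine children) inside a bounded box $Q_n$, the boxes $Q_n$ being pairwise disjoint and arranged so that only finitely many of them meet any given Euclidean ball — which is possible precisely because each bush is finite. Then draw the successor arcs exactly as in Section \ref{sec:thebijection}; as there, they can be drawn pairwise non-crossing and in a laminar fashion, each arc enclosing precisely the corners lying strictly between its two endpoints in the contour order. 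Erasing the original tree edges and the vertices of types $2$, $3$ and $4$ (merging arcs at type-$2$ vertices) then yields a plane embedding of $\Psi(\mathbf t,\mathbf e,\mathbf l)$, and it remains only to check that this embedding is proper.

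To that end, fix a Euclidean ball $B$. The key input is the estimate already used in the proof of Lemma \ref{cont}: if $c$ is a corner drawn in $\bigcup_{n\leq k}Q_n$ and $x_k$ is the minimum label over the (finite) union of the first $k+1$ bushes, then the arc issued from $c$ never climbs above the first type-$1$ spine vertex $u$ with $\mathbf l(u)<x_k-1$; a symmetric remark controls the arcs of which $c$ is the target. Consequently the finitely many arcs incident to corners drawn in $\bigcup_{n\leq k}Q_n$ are all drawn inside a bounded region $\mathcal R_k$, which one may take to be a neighbourhood of $\bigcup_{n\leq|u|}Q_n$ for the spine vertex $u$ attached to $k$ as above; and since the boxes escape to infinity one may arrange that $\mathcal R_k\nearrow\R^2$. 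Now choose $k$ large enough that $B\subset\mathcal R_k$.

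Finally, let $\varepsilon$ be an edge of $\Psi(\mathbf t,\mathbf e,\mathbf l)$ meeting $B$. Either $\varepsilon$ is incident to one of the finitely many vertices lying in $\mathcal R_k$, in which case — the map being locally finite — there are only finitely many possibilities for $\varepsilon$; or $\varepsilon$ joins two corners both drawn outside $\mathcal R_k$, in which case, by laminarity, $\varepsilon$ either encloses no corner of $\bigcup_{n\leq|u|}Q_n$ and is then drawn inside the union of the boxes $Q_m$, $m>|u|$, hence disjoint from $B$, or encloses $\mathcal R_k$ and is then a loop around $\mathcal R_k\supset B$ whose two endpoints lie outside $B$, so that it too may be, and is, drawn outside $B$. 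In all cases $\varepsilon$ avoids $B$, a contradiction; hence only finitely many edges meet $B$, which is the assertion.

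The step I expect to be the main obstacle is the second paragraph: one must upgrade the purely combinatorial locality of Lemma \ref{cont} to a statement about the planar picture — namely that the successor arcs can genuinely be realised so that every arc lies in a bounded region and ``distant'' arcs can be pushed out towards infinity without breaking the non-crossing condition — and verify that this is compatible with the escaping-boxes drawing of the mobile. Everything else is bookkeeping.
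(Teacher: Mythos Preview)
Your outline correctly identifies the issue and the general shape of the argument, and you are right that the crux is exactly the step you flag at the end. But as written, the proposal does not close that gap, and the case analysis in the third paragraph is circular. You say that the arcs incident to corners in $\bigcup_{n\leq k}Q_n$ ``are all drawn inside a bounded region $\mathcal R_k$'', and later that an arc enclosing $\mathcal R_k$ ``may be, and is, drawn outside $B$''. Both statements presuppose a specific embedding, yet the only description you have given is ``draw the successor arcs exactly as in Section~\ref{sec:thebijection}, non-crossing and laminar''. That does not pin down where any individual arc goes; in particular, it does not prevent the infinitely many arcs that wrap from the right of the spine around to the left from accumulating in a bounded region. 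The combinatorial locality of Lemma~\ref{cont} tells you which \emph{corners} an arc joins, not where the arc is drawn, so it cannot by itself yield the geometric conclusion you want. You must first fix the embedding once and for all, and only then choose $B$ and count.

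The paper's proof supplies precisely the missing construction. It embeds the mobile in $\Z\times\Z_+$ with the spine along $\{0\}\times\Z_+$, so that the tree sits above the graph of a continuous function $f:\R\to\R$ tending to $+\infty$ at both ends. Arcs joining two corners on the left of the spine are handled by the laminarity you describe. The problematic arcs are those originating on the right of the spine, which may have to go around a large portion of the tree; for these the paper introduces strips $S_n=\{(x,y):\,f(x)-n\leq y<f(x)-n+1\}$ below the tree and routes the $n$-th such arc through $S_n$, with the two legs joining the endpoints to $S_n$ having monotone second coordinate. This is what guarantees that each arc lies in a definite bounded region, and that only finitely many of them pass through any bounded set. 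Your argument becomes a proof once you insert a construction of this kind in place of the appeal to Section~\ref{sec:thebijection}.
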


\begin{proof} We first start by embedding the mobile in the plane in a convenient way. We draw its infinite spine as the subset $\{0\}\times\Z_+$, where the child of $(0,n)$ is $(0,n+1)$ for $n\in\Z_+$. Starting from this, we can then embed the tree in $\Z\times\Z_+$ such that the second coordinate of a vertex is always its graph distance to the root, and also such that the children of any vertex $u$ always form a set of the type $\{(n,m),(n+1,m),\ldots,(n+k_{u}(\mathbf{t})-1,m)\}$, with $n\in\Z$ and $m\in\N$ and their first coordinates are in the correct order. With such an embedding, it is also apparent that there exists a continuous function $f:\R\to\R$ with is decreasing on $(-\infty,0]$ and increasing on $[0,+\infty)$, which has limit $+\infty$ at both $-\infty$ and $+\infty$ such that $\mathbf{t}$ is strictly above the graph of $f$.

We point out an important fact of the bijection: let $u$ be any corner of type $1$ or $2$ and let $v$ be its successor. Then, for any corner $w$ of type $1$ or $2$ which is encountered between $u$ and $v$ in the contour process of the mobile, the successor of $w$, which we call $x$, is then encountered between $w$ and $v$, and the arc between $w$ and $x$ is then enclosed between $\mathbf{t}$ and the arc connecting $u$ and $v$. From this fact, we obtain that all the arcs which connect two points on the left-hand side of the tree can be embedded without any issues: first draw the arcs connecting the line of successors starting at the root, and enclose in each of them the other necessary arcs.

The arcs which originate from the right-hand side of the tree are a more complex issue, because some of them might start very high on the right-hand side, go around a large part of the tree and end up high on the left-hand side. To make sure that these are well separated, we introduce for $n\in\Z_+$ the ``strip"
	\[S_n=\Big\{(x,y)\in\R^2: \ f(x)-n+1\leq y < f(x)-n\Big\}
\]
We now explore the right-hand side of the tree in counter-clockwise order and, when we encounter the $n$-th corner of type $1$ or $2$, we join it to $S_n$. We point out that it is possible to do this in such a way that second coordinate along the path is nondecreasing. We then do the same thing for the corner's successor, and then join both halves by a path which stays in $S_n$.

The paths we have drawn this way still do not intersect because of the ``enclosure" property as before, and this embedding is indeed such that bounded subsets of $\R^2$ only encounter a finite number of edges. This is because we have split these edges in parts which are in $S_n$, of which there is only one for every $n$, and parts which originate from vertices of the tree and have nondecreasing second coordinate, of which there are a finite amount in bounded subsets because there is a finite amount of vertices of $\mathbf{t}$ with bounded second coordinate.
\end{proof}

\subsubsection{Behaviour of the labels on the spine of the infinite tree} 
Let $(\mathbf{T,E,L})$ be a $4$-tree with law $\widehat{\pr}^{(1),(0)}_{\z,\nu}$ or the tree obtained from merging both components of a forest with distribution $\widehat{\pr}^{(2,2),(\frac{1}{2},\frac{1}{2})}_{\z,\nu}$ at their roots. The aim of this section is to show that it is an infinite mobile, that is, that the labels on the spine do not have a lower bound. Let us first describe it quickly.

The root of $\T$ has either type $1$ and label $0$, or type $2$ and label $1/2$, in which case it has (exceptionally) two children of type $4$, one of them (uniformly selected) being on the spine. The vertices which are not on the spine have offspring distribution $\zeta$, which was defined in Proposition \ref{bij} as the uniform ordering of $\mu$, while vertices which are on the spine have offspring $\widehat{\zeta}$, defined by \eqref{defzetahat}. The distribution $\widehat{\zeta}$ is itself the uniform ordering of a distribution $\widehat{\mu}$ on $(\Z_+)^4$ which we defined by
	\[\widehat{\mu}^{(i)}(k_1,k_2,k_3,k_4)=\frac{k_1b_1+k_2b_2+k_3b_3+k_4b_4}{b_i}\mu^{(i)}(k_1,k_2,k_3,k_4)
\]
for $i\in[4]$ and $k_1,k_2,k_3,k_4\in\Z_+$ and where $b_1,b_2,b_3,b_4$ are some positive numbers which depend on $\mathbf{q}$. The label displacement distribution $\nu^{(i)}_{\mathbf{w}}$ for a type $i\in[K]$ and a word $\mathbf{w}$ is then the uniform distribution on the set $D^{(i)}_{\mathbf{w}}$ which was defined in Proposition \ref{bij}.

\begin{lemma}\label{reversion} Let $i\in \{1,2,3,4\}$ and $\mathbf{w}\in\W_4$ such that $\zeta^{(i)}_{\mathbf{w}}>0$. Define the reversed word $\overset{\leftarrow}{\mathbf{w}}=(w_{|\mathbf{w}|},\ldots,w_1)$, and, for a label sequence $\mathbf{y}=(y_i)_{i\in[|\mathbf{w}|]}$, let $\overset{\leftarrow}{\mathbf{y}}=(-y_{|\mathbf{w}|},-y_{|\mathbf{w}|-1},\ldots,-y_1)$. The function which maps $\mathbf{y}$ to $\overset{\leftarrow}{\mathbf{y}}$ is a bijection between $D^{(i)}_{\mathbf{w}}$ and $D^{(i)}_{\overset{\leftarrow}{\mathbf{w}}}$, sets which are defined in Proposition \ref{bij}.
\end{lemma}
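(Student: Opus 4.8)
The plan is to verify directly that the reversal $\mathbf{y}\mapsto\overset{\leftarrow}{\mathbf{y}}$ carries the list of inequalities defining $D^{(i)}_{\mathbf{w}}$ onto the list defining $D^{(i)}_{\overset{\leftarrow}{\mathbf{w}}}$, and then to observe that this map is an involution, so that it is automatically a bijection between the two sets. This is essentially pure bookkeeping with indices; there is no genuine obstacle.

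First I would dispose of the cases $i=1$ and $i=2$, where $D^{(i)}_{\mathbf{w}}=\{0\}^{|\mathbf{w}|}$ for every $\mathbf{w}$ and the reversal fixes the zero vector, so there is nothing to check. For $i\in\{3,4\}$, write $n=|\mathbf{w}|$ and (renaming the bound index in the definition of $D^{(i)}_{\mathbf{w}}$ to $j$ to avoid a clash with the type) extend the boundary conventions by putting $\overset{\leftarrow}{w}_k=w_{n+1-k}$ and $\overset{\leftarrow}{y}_k=-y_{n+1-k}$ for every $k\in\{0,1,\dots,n+1\}$; these are consistent with the prescribed values $\overset{\leftarrow}{w}_0=\overset{\leftarrow}{w}_{n+1}=i-2$ and $\overset{\leftarrow}{y}_0=\overset{\leftarrow}{y}_{n+1}=0$ precisely because $w_0=w_{n+1}=i-2$ and $y_0=y_{n+1}=0$.

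Then, for each $k\in\{0,\dots,n\}$, the $k$-th constraint expressing $\overset{\leftarrow}{\mathbf{y}}\in D^{(i)}_{\overset{\leftarrow}{\mathbf{w}}}$, namely
\[
\overset{\leftarrow}{y}_{k+1}-\overset{\leftarrow}{y}_k+\tfrac12\big(\mathbbm{1}_{\{\overset{\leftarrow}{w}_k=1\}}+\mathbbm{1}_{\{\overset{\leftarrow}{w}_{k+1}=1\}}\big)\in\Z_+,
\]
becomes, after substituting the definitions above and setting $j=n-k$,
\[
y_{j+1}-y_j+\tfrac12\big(\mathbbm{1}_{\{w_j=1\}}+\mathbbm{1}_{\{w_{j+1}=1\}}\big)\in\Z_+,
\]
which is exactly the $j$-th constraint expressing $\mathbf{y}\in D^{(i)}_{\mathbf{w}}$. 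Since $k\mapsto n-k$ is a permutation of $\{0,\dots,n\}$, the two systems of constraints coincide, so $\mathbf{y}\in D^{(i)}_{\mathbf{w}}$ if and only if $\overset{\leftarrow}{\mathbf{y}}\in D^{(i)}_{\overset{\leftarrow}{\mathbf{w}}}$.

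Finally, applying the reversal twice returns $\mathbf{y}$, so the map is its own inverse, hence a bijection. The only step that calls for the slightest care is making sure the half-integer shifts $\mathbbm{1}_{\{w_j=1\}}$ and the boundary terms at indices $0$ and $n+1$ line up correctly under the relabeling; but since the expression inside each constraint is visibly invariant under $j\leftrightarrow n-j$ combined with $y\mapsto-y$ (addition being commutative), this presents no real difficulty.
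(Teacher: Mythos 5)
Your proof is correct and follows essentially the same route as the paper's: dispose of $i=1,2$ trivially, verify for $i=3,4$ that the reversal maps each constraint of $D^{(i)}_{\mathbf{w}}$ onto a constraint of $D^{(i)}_{\overset{\leftarrow}{\mathbf{w}}}$ via the index change $j\mapsto n-j$ together with the sign flip, and conclude bijectivity from involutivity. Your treatment of the boundary conventions at indices $0$ and $n+1$ is in fact spelled out more carefully than in the paper.
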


As a corollary, we get that, if $\mathbf{W}$ has distribution $\widehat{\zeta}^{(i)}$ for some $i$ and $\mathbf{Y}$ has distribution $\nu^{(i)}_{\mathbf{W}}$ conditionally on $\mathbf{W}$, then the pair $(\overset{\leftarrow}{\mathbf{W}},\overset{\leftarrow}{\mathbf{Y}})$ has the same distribution as $(\mathbf{W},\mathbf{Y})$.

\begin{proof} If $i=1$ or $i=2$ then the result is immediate, since $\overset{\leftarrow}{\mathbf{w}}=\mathbf{w}$ and $D^{(i)}_{\mathbf{w}}$ only has one element.

If $i=3$ or $i=4$, bijectivity of the map comes from the fact that reversing a sequence (and eventually changing the signs of its elements) is an involutive operation, and thus we only need to check that $\overset{\leftarrow}{\mathbf{y}}\in D^{(i)}_{\overset{\leftarrow}{\mathbf{w}}}$ for any displacement list $\mathbf{y}$, which is straightforward given the definitions, since $(-y_{|\mathbf{w}|+1-(i+1)})-(-y_{|\mathbf{w}|+1-i})=y_{|\mathbf{w}|-i+1}-y_{|\mathbf{w}|-i}$ for $i\in\{0,\ldots,|\mathbf{w}|\}$.
\end{proof}

\begin{lemma}\label{unbounded} Let, for $n\in \Z_+$, $U_n$ be the $(n+1)$-th vertex of type $1$ of the spine of $\mathbf{T}$. We then have
	\[\underset{n\in\N}\inf \ \mathbf{L}(U_n) = -\infty.
\]
\end{lemma}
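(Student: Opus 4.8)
The plan is to reduce the statement to a statement about a one-dimensional random walk with i.i.d.\ steps, and then invoke recurrence. The key observation is that the sequence of labels $\bigl(\mathbf{L}(U_n)\bigr)_{n\in\Z_+}$ along the type-$1$ vertices of the spine is itself a random walk: by the branching/spine description of $\widehat{\pr}^{(1),(0)}_{\z,\nu}$, the pieces of the tree strung along the spine between consecutive type-$1$ spine vertices are i.i.d., and the label increment $\mathbf{L}(U_{n+1})-\mathbf{L}(U_n)$ depends only on the corresponding independent piece (the spine offspring distributions $\widehat{\zeta}$, the choice of the next spine vertex proportional to the $b$-weights, and the displacement distributions $\nu$, all applied to the bounded stretch of spine between $U_n$ and $U_{n+1}$). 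Hence $\bigl(\mathbf{L}(U_n)\bigr)_n$ is a random walk on $\R$ (in fact on $\tfrac12\Z$) started at $0$ (or at $\mathbf{L}(U_0)$, a fixed value), with i.i.d.\ increments. First I would make this decomposition precise and name the step distribution, call it $\chi$.

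The heart of the argument is then to show that $\chi$ is \emph{centered}, i.e.\ has mean zero. Once we know $\mathbb{E}[\chi]=0$, the walk $\bigl(\mathbf{L}(U_n)\bigr)_n$ is either degenerate at a constant --- which is impossible here because the displacement laws $\nu^{(i)}_{\mathbf{w}}$ for $i\in\{3,4\}$ genuinely allow negative steps, so $\chi$ is not a.s.\ zero --- or it oscillates, and in particular $\inf_n \mathbf{L}(U_n)=-\infty$ almost surely, which is exactly the claim. To prove that $\chi$ is centered, I would use the symmetry afforded by Lemma~\ref{reversion}: the corollary there states that reversing the offspring word and negating the displacement list leaves the joint law $(\mathbf{W},\mathbf{Y})$ under the size-biased spine dynamics invariant. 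The subtlety is that reversing a word changes which child is ``the next spine vertex'', since the spine-child is chosen with probability proportional to $b_{x_j}$ and its position in the word matters for the displacement. I would argue that, because the spine-child selection probability $b_{x_j}/\sum_l b_{x_l}$ is determined by the \emph{type} $x_j$, not the position, and the reversal map sends the $j$-th position to the $(|\mathbf{w}|+1-j)$-th while preserving the multiset of types, the law of the pair (spine-child position within the offspring, displacement of the spine-child relative to $U_n$) is invariant under the reversal/negation involution. This forces the displacement $\mathbf{L}(U_{n+1})-\mathbf{L}(U_n)$ to have a symmetric distribution, hence mean zero (one should check integrability, which in the regular critical case is immediate from exponential moments, and in general follows since the step is bounded by a geometric-type quantity controlled by $\widehat{\zeta}$; alternatively one only needs the walk to not drift to $+\infty$, for which $\mathbb{E}[\chi^+]<\infty$ and $\mathbb{E}[\chi]\le 0$ suffice, and symmetry gives $\mathbb{E}[\chi]=0$).

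The main obstacle I anticipate is the bookkeeping around the spine-child selection under reversal: one must verify carefully that the displacement of the spine-child, measured as $\mathbf{L}(U_{n+1})-\mathbf{L}(U_n)=y_J$ where $J$ is the (random) index of the spine-child in $\mathbf{W}$, transforms correctly under the involution $(\mathbf{W},\mathbf{Y})\mapsto(\overset{\leftarrow}{\mathbf{W}},\overset{\leftarrow}{\mathbf{Y}})$, namely that $y_J \mapsto -y_J$ in distribution. This requires tracking that if $J$ is chosen proportionally to $b_{W_J}$ then after reversal the image index $|\mathbf{W}|+1-J$ is chosen proportionally to $b_{\overset{\leftarrow}{\mathbf{W}}_{|\mathbf{W}|+1-J}} = b_{W_J}$, so the selection mechanism is preserved, and the displacement entry at that image index is $\overset{\leftarrow}{\mathbf{Y}}_{|\mathbf{W}|+1-J} = -Y_J$. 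Granting this, the step distribution $\chi$ is symmetric about $0$, non-degenerate, and the recurrence/oscillation of the resulting centered random walk on $\tfrac12\Z$ (cf.\ the argument via Chung--Fuchs or Theorem~8.2 of \cite{Kallenberg}, exactly as used in the proof of Lemma~\ref{ratio}) yields $\liminf_n \mathbf{L}(U_n) = -\infty$, completing the proof. The case where the root has type $2$ is handled identically after noting that the two exceptional type-$4$ children of the root only affect $\mathbf{L}(U_0)$ by a fixed finite amount and do not alter the tail behaviour.
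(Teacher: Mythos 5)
Your proposal is correct and follows essentially the same route as the paper: the paper also observes that $\bigl(\mathbf{L}(U_n)\bigr)_{n\in\Z_+}$ is a random walk, proves it is centered by invoking the reversal/negation involution of Lemma \ref{reversion} (phrased globally as the mirrored tree $(\overset{\leftarrow}{\mathbf{T}},\overset{\leftarrow}{\mathbf{E}},\overset{\leftarrow}{\mathbf{L}})$ having the same law, so that $\mathbf{L}(U_1)-\mathbf{L}(U_0)$ is symmetric), and then concludes by oscillation of centered walks via Theorem 8.2 of \cite{Kallenberg}. Your extra bookkeeping on how the spine-child selection and its displacement transform under the involution, and your remarks on non-degeneracy and integrability, only make explicit details the paper leaves implicit.
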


\begin{proof}

Note that $U_n$ is well-defined for all $n\in\Z_+$, because the number of vertices of type $1$ on the spine of $\mathbf{T}$ is a.s. infinite. Indeed, if it were not the case then all the vertices on the spine after a certain height would have type $2$ or $4$, but since a vertex of type $4$ has positive probability of having at least one child of type $1$, having an infinite sequence of vertices $2$ and $4$ has probability $0$.

Notice then that $\big(\mathbf{L}(U_n)\big)_{n\in\Z_+}$ is in fact a centered random walk in $\mathbf{Z}$. It is a random walk because of the construction - the set of descendants of a vertex of type $1$ of the spine with label $k$ will have distribution $\widehat{\pr}^{(1,k)}_{\z,\nu}$. We can see that it is centered thanks to Lemma \ref{reversion}. Define the mirrored tree $(\overset{\leftarrow}{\mathbf{T}},\overset{\leftarrow}{\mathbf{E}},\overset{\leftarrow}{\mathbf{L}})$ by reversing the order of all the offspring of $\mathbf{T}$. To precise, if $u=u_1u_2\ldots u_n\in\mathbf{T}$, then let, for $i\in[n]$, $v_i=k_{u_1\ldots u_{i-1}}-i+1$ and let then $\overset{\leftarrow}{u}=v_1\ldots v_n$. Let then $\overset{\leftarrow}{\mathbf{E}}(\overset{\leftarrow}{u})=\mathbf{E}(u)$ and, define the labels $\overset{\leftarrow}{\mathbf{L}}$ on $\overset{\leftarrow}{\mathbf{T}}$ by $\overset{\leftarrow}{\mathbf{L}}(\emptyset)=\mathbf{L}(\emptyset)$ and, for all $u$, $\mathbf{y}_{\overset{\leftarrow}{u}}=\overset{\leftarrow}{\mathbf{y}_u}$ (as defined in Lemma \ref{reversion}). Since, for $i\in[4]$ and $\mathbf{w}\in\W_4$ the distribution $\zeta^{(i)}$ is the uniform ordering of $\mu^{(i)}$ and $\nu^{(i)}_{\mathbf{w}}$ is uniform on $D^{(i)}_{\mathbf{w}}$, we obtain from Lemma \ref{reversion} that $(\overset{\leftarrow}{\mathbf{T}},\overset{\leftarrow}{\mathbf{E}},\overset{\leftarrow}{\mathbf{L}})$ has the same distribution as $(\mathbf{T},\mathbf{E},\mathbf{L})$. In particular, $\mathbf{L}(U_1)-\mathbf{L}(U_0)$ has the same distribution as $\mathbf{L}(U_0)-\mathbf{L}(U_1)$, making its distribution centered. In particular, the centered random walk $\big(\mathbf{L}(U_n)\big)_{n\in\Z_+}$ then has a.s. no upper or lower bounds, for example by \cite{Kallenberg}, Theorem 8.2.


\end{proof}

\subsubsection{Removing conditionings}
Take once again $I\in\{V,E,F\}$. We need for this section some extra notation: for $n\in\N$, $\mathcal{M}_n$ is the set of pointed and rooted maps $(m,e,r)$ with $\#I(m)=n$, $\mathcal{M}_n^+$ and $\mathcal{M}_n^0$ are the analogous sets of positive and null maps. The probability measures $B_{\mathbf{q}}^n$, $B_{\mathbf{q}}^{n,+}$ and $B_{\mathbf{q}}^{n,0}$ are also the associated conditioned versions of $B_{\mathbf{q}}.$

The work done in the previous sections shows that maps with distribution $B_{\mathbf{q}}^{n,+}$ and $B_{\mathbf{q}}^{n,0}$ converge in distribution along the subsequence $(\alpha_I+d_In)_{n\in\N}$ (considering $B_{\mathbf{q}}^{n,0}$ only in the non-bipartite case). To show that maps with distribution $B_{\mathbf{q}}^n$ converge, all that is left for us to do is to show that the two quantities $B_{\mathbf{q}}^n (\mathcal{M}_n^+)$ and $B_{\mathbf{q}}^n( \mathcal{M}_n^0)$ converge (along the same subsequence). Since $2B_{\mathbf{q}}^n (\mathcal{M}_n^+)+B_{\mathbf{q}}^n( \mathcal{M}_n^0)=1$, we can in fact restrict ourselves to showing that the quotient $\frac{B_{\mathbf{q}}^n (\mathcal{M}_n^+)}{B_{\mathbf{q}}^n( \mathcal{M}_n^0)}$ converges. Elementary calculations on conditionings give us

\begin{align*}
\frac{B_{\mathbf{q}}^n (\mathcal{M}_n^+)}{B_{\mathbf{q}}^n( \mathcal{M}_n^0)}   &=\frac{B_{\mathbf{q}}\Big((M,E,R)\in\mathcal{M}^+\;|\;(M,E,R)\in\mathcal{M}_n\Big)}{B_{\mathbf{q}}\Big((M,E,R)\in\mathcal{M}^0\;|\;(M,E,R)\in\mathcal{M}_n\Big)} \\
                                   &=\frac{B_{\mathbf{q}}^+\Big((M,E,R)\in\mathcal{M}_n\Big)}{B_{\mathbf{q}}^0\Big((M,E,R)\in\mathcal{M}_n\Big)}\frac{B_{\mathbf{q}}(\mathcal{M}^+)}{B_{\mathbf{q}}(\mathcal{M}^0)}.
\end{align*}

Recall that, in the BDFG bijection, the number of vertices of the map is exactly one more than the number of vertices of type $1$ in the mobile. As a consequence, we have
	\[\frac{B_{\mathbf{q}}^+\Big((M,E,R)\in\mathcal{M}_n\Big)}{B_{\mathbf{q}}^0\Big((M,E,R)\in\mathcal{M}_n\Big)}
	           =\frac{\pr^{(1)}_{\zeta} \big( |\mathbf{T}|_{\gamma_I}=n-1\big)}{\pr^{(2,2)}\big( |\mathbf{F}|_{\gamma_I}=n-1)}.
\]
We then deduce from \eqref{byword} and Lemma \ref{mapperiod} that this quotient indeed converges as $n$ converges to infinity, along the $(\alpha_I+dn)_{n\in\N}$ subsequence.

\subsubsection{The non-pointed case}

We follow here the method given in \cite{Abraham}, Section 6. We now assume that $\mathbf{q}$ is regular critical, and that $I\in\{E,F\}$, and $(M_n,E_n,R_n)$ is still a map with distribution $B_{\mathbf{q}}$ conditioned on $\#I(M_n)=n$. We show that the number of vertices of $M_n$ is concentrated around a multiple of $n$.

\begin{lemma}\label{lem:unpointing} There exists $m>0$ such that, for $\delta>0$, there exists $C_{\delta}>0$ such that, for $n\in\N$ large enough,
	\[\pr\Big[\big|\#V(M_n) - m n\big|>\delta n\Big]\leq \exp(-C_{\delta}n)
\]
\end{lemma}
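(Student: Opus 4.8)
The plan is to recognize $\#V(M_n) = 1 + \#_1\mathbf{T}$ via the BDFG bijection, where $(\mathbf{T},\mathbf{E},\mathbf{L})$ has distribution $\pr^{(1),(0)}_{\z,\nu}$ (and similarly a two-tree forest in the null case), conditioned on $|\mathbf{T}|_{\gamma_I}=n-1$. So the statement reduces to a concentration estimate for $\#_1\mathbf{T}$ given $|\mathbf{T}|_{\gamma_I}=n-1$, with $\gamma_E=(1,0,1,1)$ or $\gamma_F=(0,0,1,1)$. The exponential tail should come from the regular-criticality hypothesis, which gives finite exponential moments for the relevant branching quantities (Proposition \ref{moments}(iv)), hence a large-deviations rate.

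First I would set things up through a cut-at-first-generation-of-type-$1$ decomposition, reducing the joint law of $(\#_1\mathbf{T}, |\mathbf{T}|_{\gamma_I})$ to that of a monotype Galton-Watson tree (the reduced tree $\Pi^{(1)}(\mathbf{T})$ from Section \ref{easycase}, with offspring law $\mu_{1,1}$) decorated, at each of its $N := \#_1\mathbf{T}$ vertices, by an independent ``weight'' equal to the $\gamma_I$-size accumulated in the corresponding cut piece $C_1$. Writing $|\mathbf{T}|_{\gamma_I} = \sum_{k=1}^{N} W_k$ where the $W_k$ are i.i.d. copies of the $\gamma_I$-size of $C_1(\mathbf{T}')$ for $\mathbf{T}'$ of type $1$, one has $\mathbb{E}[W_1] =: \kappa < \infty$ by Proposition \ref{moments}(ii) (it is a finite linear combination of the $a_j/a_1$), and $W_1$ has a finite exponential moment by regular criticality (bound $|C_1|_{\gamma_I}$ by $\gamma_{\max}\#C_1$ and use Proposition \ref{moments}(iv), exactly as in the argument establishing analyticity of $G$ in Section 4.3.1). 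Then the natural constant is $m = 1/\kappa$: conditionally on $N$, $|\mathbf{T}|_{\gamma_I}$ concentrates around $\kappa N$, so $\{|\mathbf{T}|_{\gamma_I} = n-1\}$ forces $N$ near $(n-1)/\kappa \approx mn$.

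The quantitative step is a standard two-sided union bound. Using the local-limit asymptotics $\pr^{(1)}(|\mathbf{T}|_{\gamma_I}=n-1)\sim c\,n^{-3/2}$ (from \eqref{bywordprime}, which holds since $\z$ is regular critical), the conditional probability $\pr[\#_1\mathbf{T}=N,\ |\mathbf{T}|_{\gamma_I}=n-1]/\pr^{(1)}(|\mathbf{T}|_{\gamma_I}=n-1)$ is bounded above by $C n^{3/2}\,\pr[\#_1\mathbf{T}=N]\cdot\pr[\sum_{k=1}^N W_k = n-1]$, and for $|N-mn|>\delta n$ one of the two factors is exponentially small: either $N$ is atypically far from $mn$ for a critical monotype Galton-Watson tree (but the total progeny of a critical GW tree with exponential offspring moments has sub-exponential tails — say $\pr[\#_1\mathbf{T} = N]$ decays polynomially for $N$ in a window and the window $|N-mn|>\delta n$ splits into $N > (m+\delta)n$, handled by Cramér's theorem applied to $\sum W_k$ since then $\mathbb{E}[\sum_{k\le N} W_k]=\kappa N \ge (1+\delta\kappa)(n-1)$, forcing an exponentially unlikely deviation of the i.i.d. sum; and $N < (m-\delta)n$, handled symmetrically, $\kappa N \le (1-\delta\kappa)(n-1)$, again a Cramér estimate). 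Summing the polynomial prefactor $n^{3/2}$ times at most $n$ terms, each $\le \exp(-c'n)$, still gives $\exp(-C_\delta n)$ after absorbing the polynomial. The null case ($I=F$, non-bipartite, forest with two type-$2$ roots) is identical since $\#V = 1+\#_1\mathbf{F}$ and the forest is a finite independent collection of GW trees.

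The main obstacle I expect is bookkeeping the periodicity and the case $\gamma_F=(0,0,1,1)$, where vertices of type $1$ carry zero $\gamma$-weight: one must be careful that $|\mathbf{T}|_{\gamma_F}$ is the sum over the type-$1$ skeleton of the i.i.d. weights $W_k$ (each being the $\gamma_F$-mass of one cut piece, \emph{not} counting the type-$1$ root of the next generation), and that $\mathbb{E}[W_1]>0$ so that $m=1/\mathbb{E}[W_1]$ is finite and positive — this uses irreducibility, guaranteeing cut pieces contain type-$3$ or type-$4$ vertices with positive probability. Everything else is a routine Chernoff/Cramér argument combined with the already-established local limit theorem \eqref{bywordprime}.
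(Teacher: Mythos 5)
Your proposal is essentially the paper's own argument: decompose the tree along its successive generations of type $1$, embed the per-generation $\gamma$-weights into an i.i.d.\ sequence with finite exponential moments (Proposition \ref{moments}(iv), regular criticality), apply Cram\'er, and absorb the $n^{3/2}$ coming from the local limit estimate into the exponential. The only difference is organisational — the paper proves the ``transposed'' special case (condition on the count of the root's type, measure another type) and leaves general $\gamma$ to the reader, while you condition on $|\mathbf{T}|_{\gamma_I}$ directly and union-bound over $N=\#_1\mathbf{T}$; just note that the joint probability $\pr[\#_1\mathbf{T}=N,\,|\mathbf{T}|_{\gamma_I}=n-1]$ should be bounded via the event inclusion into $\{\sum_{k\le N}W_k=n-1\}$ (as the paper does), not written as a product of the two marginals, and that for $\gamma_F$ the range of $N$ is unbounded, which the Cram\'er lower-deviation bound handles.
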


This lemma is itself a consequence of this similar result on trees:

\begin{lemma}\label{lem:concentrationtrees} Let $K\in\N$ and let $\nu$ be a non-degenerate, irreducible and regular critical $K$-type ordered offspring distribution. Let $\gamma\in(\Z_+)^K$ and $\gamma'\in(\Z_+)^K$ be two size measuring vectors. There then exists $m>0$ such that, for any type $i\in[K]$ and $\delta>0$, there exists $c_{\delta}>0$ such that, for $n\in\N$ large enough,
	\[\pr^{(i)}\big(|\T|_{\gamma}=n,\Big||\T|_{\gamma'}-mn\Big|\geq \delta n\big)\leq \exp(-c_{\delta}n)
\]
\end{lemma}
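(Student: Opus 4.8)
The plan is to produce $m$ as the ``right'' asymptotic ratio between the two notions of size and then prove exponential concentration by a standard large-deviation / Cramér argument applied to the first-generation-of-type-$i$ decomposition, combined with a union bound over the number of such generations. First I would define $m = (\gamma'\cdot\mathbf{a})/(\gamma\cdot\mathbf{a})$: this is forced by Proposition \ref{moments}(ii), since on average the $k$-th generation of type $i$ contributes $a_j/a_i$ vertices of type $j$, so $|\T|_{\gamma'}$ and $|\T|_{\gamma}$ both grow with the number of type-$i$ generations at rates $\gamma'\cdot\mathbf{a}/a_i$ and $\gamma\cdot\mathbf{a}/a_i$ respectively. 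Writing $N$ for the number of generations of type $i$ (including the $0$-th, i.e. the root's generation), and $S_1,\dots,S_N$ for the successive generation sizes (so that $|\T|_\gamma = \sum_{\ell=1}^{N}\Sigma^\gamma_\ell$ where $\Sigma^\gamma_\ell$ is the sum of $\gamma$-weights of the vertices strictly between generation $\ell-1$ and generation $\ell$, plus the vertices of generation $\ell$ themselves), I would set up the decomposition so that the pair $(|\T|_\gamma,|\T|_{\gamma'})$ is a sum of $N$ i.i.d.-up-to-type blocks.

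The key steps, in order: (1) By Proposition \ref{moments}(iv) the offspring-within-one-type-$i$-generation variables $\xi_{i,j}$, hence also the block-contributions $\Sigma^\gamma_\ell$ and $\Sigma^{\gamma'}_\ell$, have finite exponential moments; regular criticality is exactly what we use here. (2) Condition on $N=p$. Given $N=p$, the tree is built from $p$ levels of type-$i$ individuals, and the $\gamma$- and $\gamma'$-sizes accumulated are sums of order $p$ many blocks each with exponential moments and with prescribed mean ratio $m$. A Chernoff/Cramér bound then gives: conditionally on $N=p$ and on $|\T|_\gamma = n$, the probability that $\big||\T|_{\gamma'}-m\,|\T|_\gamma\big|\ge \delta n$ is at most $e^{-c'_\delta p}$ for some $c'_\delta>0$, uniformly, PROVIDED $p$ is comparable to $n$. (3) Separately, one needs that on the event $|\T|_\gamma=n$ the number of generations $N$ is itself not too small: there exist $\eta>0$ and $c''>0$ with $\pr^{(i)}(|\T|_\gamma=n,\ N\le \eta n)\le e^{-c'' n}$. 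This is because each block has an exponential moment, so $\pr(\Sigma^\gamma_1+\dots+\Sigma^\gamma_{\lfloor \eta n\rfloor}\ge n)\le e^{-c''n}$ once $\eta$ is small enough relative to the exponential-moment radius (a one-sided Chernoff bound on a sum of $\lfloor\eta n\rfloor$ light-tailed nonnegative variables exceeding $n$). (4) Combine: split according to $N\le \eta n$ (handled by step 3) and $N>\eta n$ (then $p\ge \eta n$, so step 2 gives $e^{-c'_\delta \eta n}$, and sum the geometric-type series over $p$). Setting $c_\delta = \min(c'', c'_\delta\eta)/2$, say, finishes it for $n$ large.

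The main obstacle I expect is making step (2) genuinely uniform and correctly handling the conditioning on $\{|\T|_\gamma=n\}$: a Chernoff bound on $\big||\T|_{\gamma'}-m|\T|_\gamma\big|$ is clean unconditionally, but we need it on the conditioned event, and the event $\{|\T|_\gamma=n\}$ has polynomially small probability (it is $\asymp n^{-3/2}$ by \eqref{bywordprime} in the regular critical case), so one cannot simply divide an $e^{-cn}$ bound by it --- that is fine, since $e^{-cn}/n^{-3/2}$ is still $e^{-c'n}$, but one has to be careful that the Cramér estimate is for the \emph{joint} event $\{|\T|_\gamma=n,\ |\T|_{\gamma'}\text{ far from }mn\}$ directly, not a conditional probability. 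Concretely I would bound $\pr^{(i)}(|\T|_\gamma=n,\ |\T|_{\gamma'}\ge (m+\delta)n)\le \pr^{(i)}(|\T|_\gamma\ge n,\ |\T|_{\gamma'}\ge(m+\delta)n)$ and apply an exponential Markov inequality to the two-parameter Laplace transform $\E^{(i)}[e^{s|\T|_\gamma + t|\T|_{\gamma'}}]$ for suitable small $s<0$, $t>0$ (or using the generation decomposition, to the per-block transforms), which is finite near the origin by regular criticality; the mismatch of means at ratio $m$ is what makes the exponent strictly negative. The lower-tail direction $|\T|_{\gamma'}\le (m-\delta)n$ is symmetric but must use that $N\gtrsim n$ on $\{|\T|_\gamma=n\}$ (step 3), since otherwise a very short, very wide tree could in principle defeat the mean estimate --- this coupling between ``$|\T|_\gamma=n$'' and ``$N$ large'' is the subtle point and is exactly why step (3) is needed as a separate ingredient.
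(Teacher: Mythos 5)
Your choice of $m=(\gamma'\cdot\mathbf{a})/(\gamma\cdot\mathbf{a})$ is correct, and Cram\'er's theorem applied to i.i.d.\ blocks with exponential moments (via Proposition \ref{moments}(iv)) is indeed the engine of the paper's proof. But your decomposition is by \emph{generations} of type $i$, and this breaks the argument in two places. First, the blocks $\Sigma^{\gamma}_\ell$ are not i.i.d.: $\Sigma^{\gamma}_\ell$ is a sum over all individuals of the $(\ell-1)$-th type-$i$ generation, whose sizes form a critical branching process, so these blocks do not have uniformly bounded exponential moments and their number is not the right count of ``independent contributions''. Second, and fatally, your step (3) is false: for a critical tree conditioned on $\{|\T|_{\gamma}=n\}$ the number of type-$i$ generations is typically of order $\sqrt{n}$, not of order $n$. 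Hence $\pr^{(i)}\big(|\T|_{\gamma}=n,\ N\le \eta n\big)$ is of order $\pr^{(i)}\big(|\T|_{\gamma}=n\big)\asymp n^{-3/2}$, nowhere near exponentially small; likewise $\pr\big(\Sigma^{\gamma}_1+\dots+\Sigma^{\gamma}_{\lfloor\eta n\rfloor}\ge n\big)$ is at least polynomially large, since a critical tree has total progeny $\ge n$ with probability $\asymp n^{-1/2}$ while its height is $O(\sqrt{n})$ with high probability. So the conditioning event concentrates exactly where your step (2) bound is unavailable.

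The repair is to index the blocks by the type-$i$ \emph{vertices} rather than by the generations, which is what the paper does. Listing the type-$i$ vertices of the tree (or of an i.i.d.\ forest of such trees) in depth-first order and letting $A_k$ be the $\gamma'$-weight of the vertices attached to the $k$-th one up to its first descendant generation of type $i$, the $A_k$ are genuinely i.i.d.\ with law built from the $\xi_{j,i}$ of Section \ref{gen1}, they have exponential moments by Proposition \ref{moments}(iv) and mean $(\gamma'\cdot\mathbf{a})/(\gamma\cdot\mathbf{a})$ per unit of $\gamma$-weight, and on the event $\{|\T|_{\gamma}=n\}$ the quantity $|\T|_{\gamma'}$ is exactly a sum of a deterministic-order-$n$ number of these blocks. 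The inclusion
\[
\Big\{\,|\T|_{\gamma}=n,\ \big||\T|_{\gamma'}-mn\big|>\delta n\,\Big\}\subset\Big\{\,\big|A_1+\dots+A_n-mn\big|>\delta n\,\Big\}
\]
(in the one-type-counting case $\gamma_k=\mathbbm{1}_{k=i}$, $\gamma'_k=\mathbbm{1}_{k=j}$) then lets Cram\'er's theorem conclude directly, with no control of the number of generations needed. Your fallback via the two-parameter Laplace transform $\mathbb{E}^{(i)}[\e^{s|\T|_{\gamma}+t|\T|_{\gamma'}}]$ can be made to work, but its finiteness for $s<0<t$ (and $t<0<s$) in the relevant wedge is not automatic --- $|\T|_{\gamma'}$ alone has only polynomial tails --- and proving it essentially requires this same per-vertex block decomposition.
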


\begin{proof} We treat the case where $\gamma_k=\mathbbm{1}_{k=i}$ ($i$ being the type of the root) and $\gamma'_k=\mathbbm{1}_{k=j}$, and leave the generalisation to all $\gamma$ and $\gamma'$ to the reader. In this case, the value of $m$ we are looking for is in fact $a_j/a_i$, the average of the measure $\xi_{j,i}$ defined in Section \ref{gen1}.

Consider an infinite sequence of i.i.d trees $(\mathbf{T}_n)_{n\in\N}$ with distribution $\pr^{(i)}$ and list the vertices of type $i$ of these trees in lexicographical order. For $n$ in $\N$, call $A_n$ the total number of vertices of type $j$ placed between the $n$-th element of this list and the next generation of type $i$. Notice now that the event where $|\#_j\T_1-\frac{a_j}{a_i} n|>\delta n $ and $\#_1\T_1=n$ is included in the event where $|A_1+\ldots+A_n - \frac{a_j}{a_i} n|>\delta n$, thus giving us 
	\[\pr^{(1)}\Big(|\#_i\mathbf{T}-\frac{a_j}{a_i} n|>\delta n \;\bigcap\; \#_1\mathbf{T}=n\Big)\leq \pr\big(|A_1+\ldots+A_n - \frac{a_j}{a_i} n|>\delta n\big),
\]
Since these variables are i.i.d with distribution $\xi_{j,i}$ and have an exponential moment thanks to Proposition \ref{moments}, point (iv), we can apply Cram\'er's theorem which gives us a constant $c_{\delta}>0$ such that, for large enough $n\in\N$,
	\[\pr\big(|A_1+\ldots+A_n - \frac{a_j}{a_1} n|>\delta n\big) \leq \exp(-c_{\delta}n),
\]
thus ending the proof.
\end{proof}

Proving Lemma \ref{lem:unpointing} from Lemma \ref{lem:concentrationtrees} then simply consists in applying the BDFG bijection and noticing that conditioning on events of the form $\{|\T|_{\gamma}=n\}$ does not change anything here, because the probability of such an event is of order $n^{-3/2}$, the inverse of which can be absorbed in the exponential factor. \qed


\bigskip

\noindent\textit{End of the proof of Theorem \ref{chap4:thcvcartes}, second part.} Since we will simultaneously manipulate pointed and non-pointed maps, we change our notation slightly here: $(M_n,E_n,R_n)$ is a rooted and pointed map with distribution $B_{\mathbf{q}}$ conditioned on $\#I(M_n)=n$ and $(M_n^{\emptyset}, E_n^\emptyset)$ is a rooted map with distribution $B_{\mathbf{q}}^\emptyset$ conditioned on $\#I(M_n^\emptyset)=n$. As mentioned earlier, the distribution of $(M_n,E_n)$ is that of a biased version of $(M_n^{\emptyset}, E_n^\emptyset)$. We write this inversely: for bounded functions $F$, we have

   \[\mathbb{E}\big[F(M_n^\emptyset,E_n^\emptyset)\big]=\mathbb{E}[\frac{1}{\#V(M_n)}]^{-1}\mathbb{E}\big[\frac{F(M_n,E_n)}{\#V(M_n)}\big].
\]
If we let $X_n=mn(\#V(M_n))^{-1}$, we get the statement
	\[\mathbb{E}\big[F(M_n^\emptyset,E_n^\emptyset)\big]=\mathbb{E}\big[F(M_n,E_n)\frac{X_n}{\mathbb{E}[X_n]}\big],
\]
which yields
	\[\mathbb{E}\big[|F(M_n^\emptyset,E_n^\emptyset)-F(M_n,E_n)|\big]=\mathbb{E}\Big[F(M_n,E_n)\Big|1-\frac{X_n}{\mathbb{E}[X_n]}\Big|\Big].
\]
Proving that $X_n$ converges to $1$ in $L^1$ will then end the proof. Take $\delta>0$ and write
	\[\mathbb{E}[|X_n-1|]\leq \delta + \mathbb{E}[|X_n-1|\mathbbm{1}_{\{|X_n-1|>\delta\}}].
\]
Let $\varepsilon=\delta(1-\delta)^{-1}$, such that the event $\{|X_n-1|>\delta\}$ is included in the event $\{|\#V(M_n) - m n|>\varepsilon n\}$. Recalling that $X_n\leq mn$, we then have
	\[\mathbb{E}[|X_n-1|]\leq \delta + (mn+1)\pr(|Y_n - m n|>\varepsilon n),
\]
and Lemma \ref{lem:unpointing} ends the proof of Proposition \ref{prop:cvuipm}.
\qed

\section{Recurrence of the infinite map}
The aim of this section if to prove the following:
\begin{theo}\label{recurrence} The random rooted graph $(M_{\infty},E_{\infty}^+)$ is almost surely recurrent.
\end{theo}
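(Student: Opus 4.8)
The natural strategy is to apply the Benjamini–Schramm / Gurel-Gurevich–Nachmias criterion: a random rooted planar graph that is almost surely locally finite and arises as a distributional (local) limit of finite planar graphs with uniformly bounded degree is almost surely recurrent; and more to the point, the version of this result for maps (as used for the UIPT/UIPQ and extended in later work) only requires a polynomial bound on the degree of the root, or a stretched-exponential tail on it, together with planarity. The plan is therefore: (i) by Theorem \ref{chap4:thcvcartes}, $(M_\infty, E_\infty^+)$ is the local limit of the finite maps $(M_n, E_n)$, which are genuine finite planar maps; (ii) by Lemma \ref{lem:planar}, $(M_\infty, E_\infty^+)$ is itself planar in the required sense, so the recurrence criterion for planar graphs is applicable; (iii) it then suffices to control the tail of the degree of the root $E_\infty^+$ in $M_\infty$, uniformly enough along $n$, to invoke the criterion.

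First I would reduce the degree control to a statement about the infinite mobile. Recall from the BDFG bijection and Section \ref{sec:mapperiod} that $M_\infty = \Psi(\mathbf{T},\mathbf{E},\mathbf{L})$, where $(\mathbf{T},\mathbf{E},\mathbf{L})$ is the size-biased infinite mobile (with law $\widehat{\pr}^{(1),(0)}_{\z,\nu}$, or the merged $\widehat{\pr}^{(2,2),(1/2,1/2)}_{\z,\nu}$ forest). The degree of the root vertex $E_\infty^+$ in $M_\infty$ equals the number of arcs incident to the corresponding corner-set of the tree, which is controlled by the offspring sizes and label displacements of finitely many vertices of the mobile lying near the root and near the spine. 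Since $\zeta$ is regular critical (we are in the case $I=E$ or $I=F$ where regularity is assumed — and for $I=V$ one argues directly or notes the degree of the root of the map is again $1+\#$children-type-$3$ of a single tree vertex, which has geometric, hence exponential, tails by the explicit formula for $\mu^{(1)}$ in Proposition \ref{bij}), the offspring distributions $\zeta$ and their size-biased versions $\widehat{\zeta}$ all have exponential moments; likewise the label-displacement supports $D^{(i)}_{\mathbf w}$ are finite-width in the relevant sense, so the number of successor-arcs landing on a given corner has exponentially decaying tails. Combining finitely many such contributions, $\deg_{M_\infty}(E_\infty^+)$ has a tail bounded by a stretched exponential — in particular all moments are finite — which is amply enough for the planar recurrence criterion.

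The cleanest route may actually avoid a delicate uniform-in-$n$ estimate: since $(M_n,E_n) \Rightarrow (M_\infty,E_\infty)$ locally, the law of $\deg_{M_n}(E_n^+)$ converges weakly to that of $\deg_{M_\infty}(E_\infty^+)$, and it is the limit object whose degree tail enters the criterion in the form usable here (existence of the limit as a local limit of finite planar maps, plus a.s. finite — indeed polynomially-tailed — root degree). So the key steps in order are: (1) cite Theorem \ref{chap4:thcvcartes} to realize $(M_\infty,E_\infty^+)$ as a local limit of finite planar maps; (2) cite Lemma \ref{lem:planar} for planarity of the limit; (3) use the BDFG description of $M_\infty$ via the infinite mobile together with regular criticality (Proposition \ref{bij} and Proposition \ref{moments}(iv)) and the finite structure of the displacement sets to show $\deg_{M_\infty}(E_\infty^+)$ has exponential (or at least polynomial) tails; (4) apply the Gurel-Gurevich–Nachmias recurrence theorem for planar graph limits to conclude.

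The main obstacle is step (3): carefully identifying which finitely many data of the mobile determine the root degree and assembling their tail bounds — in particular handling the vertices on the infinite spine, whose offspring follows the size-biased law $\widehat{\zeta}$ and whose label behavior is governed by Lemma \ref{unbounded}, and making sure that the (possibly unbounded-below) labels on the spine do not cause infinitely many arcs to accumulate at the root corner. That last point is exactly the local-finiteness already established in Lemma \ref{cont}; what remains is the quantitative tail, which follows from exponential integrability but requires bookkeeping of the arc-creation rule of the bijection.
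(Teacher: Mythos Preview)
Your overall framework matches the paper's exactly: invoke the Gurel-Gurevich--Nachmias criterion, use Theorem~\ref{chap4:thcvcartes} to realize $(M_\infty,E_\infty^+)$ as a local limit of finite planar maps rooted at a stationary vertex, and reduce to an exponential tail on $\deg(E_\infty^+)$.

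The genuine gap is in step~(3), which you yourself flag as the obstacle. Two concrete issues. First, the degree of $E_\infty^+$ is \emph{not} controlled by ``finitely many vertices'' of the mobile in any deterministic sense. The degree splits into outgoing arcs (one per corner of the root, hence governed by the root's own offspring) and incoming arcs: the latter count all corners elsewhere in the mobile whose successor is the root, namely the corners with label $1$ or $\tfrac12$ met in the contour before the first label-$0$ corner. This can involve arbitrarily many vertices, and bounding it is precisely the content of Proposition~\ref{unsousarbre} --- not mere bookkeeping.

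Second, you lean on regular criticality to get exponential moments for $\zeta$ and $\widehat\zeta$, but this is both stronger than needed and not obviously sufficient for the incoming-arc count. The paper's mechanism uses no regular criticality. The key is the \emph{explicit} form of $\mu^{(1)}$ in Proposition~\ref{bij}: a type-$1$ vertex has a \emph{geometric} number of type-$3$ children. By memorylessness, each time the contour visits a label-$1$ type-$1$ corner there is a uniformly positive probability that the vertex has yet another type-$3$ child whose rightmost offspring has label~$0$, terminating the count; hence the number of such corners is dominated by a geometric variable. A parallel argument handles type-$2$ vertices of label~$\tfrac12$. Together with Lemma~\ref{liengeo} (the spine vertex's left/right offspring counts are again geometric) and Lemma~\ref{sommeEI}, this gives the exponential tail for merely critical $\mathbf{q}$ --- important, since $(M_\infty,E_\infty)$ exists under mere criticality when $I=V$.
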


Our principal tool for the proof will be the main result of \cite{GGN}: since $(M_{\infty},E_{\infty}^+)$ is the limit in distribution of $\big((M_n,E_n^+),n\in\N\big)$, and since $E_n^+$ is chosen according to the stationary distribution on $M_n$ (that is, a vertex is chosen with probability proportional to its degree, i.e. its number of adjacent edges), then Theorem 1.1 of \cite{GGN} states that if we can find positive constants $\lambda$ and $C$ such that, for all $n\in\N$,
	\[\pr(\mathrm{deg}(E^+)\geq n) \leq C\e^{-\lambda n},
\]
then Theorem \ref{recurrence} will be proven.

We invite the reader to read Appendix \ref{sec:EI} where we discuss a few elementary results concerning random variables with such exponential tails.

\subsection{The case of positive maps}
Picture a mobile $(\mathbf{T,E,L})$ with distribution $\widehat{\pr}^{(1),(0)}_{\zeta,\nu}$: it has an infinite spine, and on its right and left sides are grafted some finite trees. Since the BDFG bijection makes $\emptyset$ into $e^+$, we will want to show that $\emptyset$ has an exponentially integrable number of successors and is the successor of an exponentially integrable number of vertices. We start with a simplified case.


\begin{prop}\label{unsousarbre} Let $\mathbf{A}$ be a mobile with distribution $\pr_{\z,\nu}^{(1),(0)}$ conditioned to the event where $\emptyset$ has exactly one child. Let $X$ be the number of corners of $\mathbf{A}$ for which $\emptyset$ is the successor. Then $X$ is an $EI(\lambda)$ variable for a certain $\lambda>0$.
\end{prop}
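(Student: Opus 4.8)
The plan is to understand, in the mobile $\mathbf{A}$, exactly which corners have $\emptyset$ as their successor, and to show this count has exponential tails. Recall that $\emptyset$ has type $1$ (label $0$) and a single child, which by the mobile rules must be of type $3$, say $v$, with label $0$ as well. The corners of $\mathbf{A}$ which have $\emptyset$ as successor are precisely the corners $w$ of type $1$ or $2$ which are visited before $\emptyset$ is revisited in the contour, and whose target label ($l_w-1$ if $e_w=1$, $l_w-\tfrac12$ if $e_w=2$) equals the label of $\emptyset$, namely $0$, \emph{and} for which $\emptyset$ is the \emph{next} such corner of type $1$. Since $\mathbf{A}$ is rooted at $\emptyset$ and $v$ is its only child, all the relevant corners live inside the subtree $\mathbf{A}_v$ hanging below $v$; and among the corners inside $\mathbf{A}_v$ with target label $0$, exactly those forming the ``last descent to $0$'' before returning to $\emptyset$ count. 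First I would make this precise: $X$ equals the number of corners $w$ in $\mathbf{A}_v$ with $\lfloor l_w\rfloor\le 0$ in an appropriate running-minimum sense — more cleanly, $X$ is one plus the number of corners of type $1$ or $2$ visited after the last time the label process (restricted to type $1$/type $2$ corners, with the successor-target convention) hits its global minimum over $\mathbf{A}_v$, provided that minimum is $\le 0$; otherwise $X$ is simply the number of corners targeting $0$. The key point is that $X$ is controlled by the total number of vertices of $\mathbf{A}_v$ whose label is at most $0$.

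**Reducing to an exponentially integrable quantity on the tree.**

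Next I would bound $X$ by a simpler tree functional. Let $N$ be the number of vertices $u\in\mathbf{A}_v$ of type $1$ or $2$ with $\mathbf{L}(u)\le 1$ (say). Then $X\le C\,N$ deterministically for some absolute constant $C$, since each vertex contributes a bounded number of corners (the degree bound is handled separately in the main recurrence argument, so here we may work with corners grouped by vertex, or simply note corners are at most $1+\deg$). So it suffices to show $N$ is $EI(\lambda)$ for some $\lambda>0$. Now $\mathbf{A}_v$ is, conditionally, a Galton–Watson mobile rooted at a type-$3$ vertex with label $0$, i.e. it has distribution $\pr^{(3),(0)}_{\z,\nu}$ (up to the harmless conditioning coming from the fact that $\emptyset$ had exactly one child). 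Because $\mathbf{q}$ is regular critical, by Proposition \ref{bij} the offspring distribution $\z$ is regular critical, hence has exponential moments, and one checks the label displacements also have exponential moments (the sets $D^{(i)}_{\mathbf w}$ give geometric-type increments with parameter bounded away from $1$, since $Z^+_{\mathbf q}>1$ controls the weights). So $\mathbf{A}_v$ is a critical multi-type Galton–Watson tree with exponential moments in offspring and in label increments, and $N$ counts the vertices of type $1$ or $2$ lying in the region $\{\mathbf{L}\le 1\}$.

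**Proving $N$ has exponential tails.**

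The hard part, and the main obstacle, is showing $N=EI(\lambda)$. The idea is a subcriticality-via-spatial-killing argument. Consider the ``sub-mobile'' $\mathbf{A}_v^{\le 1}$ obtained by keeping only vertices all of whose ancestors (in $\mathbf{A}_v$) have label $\le 1$; this is a Galton–Watson-type object where a type-$1$ or type-$2$ child with label $>1$ is pruned (together with its descendants). Because the label increments are centered (Lemma \ref{reversion} type symmetry, or directly from the definition of $D^{(i)}_{\mathbf w}$ via the BDFG construction) and have exponential moments, a child receives label $\le 1$ with probability strictly less than $1$ in a way that, combined with criticality of the unpruned tree and the fact that label increments can be genuinely positive, makes the pruned offspring operator have spectral radius strictly less than $1$ — i.e.\ $\mathbf{A}_v^{\le 1}$ is a \emph{subcritical} multi-type Galton–Watson tree with exponential offspring moments. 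A subcritical Galton–Watson tree with exponential offspring moments has an exponentially integrable total size (this is standard, and also follows from the generating-function / smooth implicit-function analysis already invoked in Section 4.3: the relevant generating function is analytic past $1$). Therefore $N$, which is at most the size of $\mathbf{A}_v^{\le 1}$, is $EI(\lambda)$ for some $\lambda>0$, and hence so is $X$. I would be careful about two points: (i) justifying that the pruned operator is genuinely subcritical and not merely critical — this needs that with positive probability a label increment along the spine-to-child step is $\ge 1$, which holds because the displacement sets $D^{(i)}_{\mathbf w}$ are unbounded and the weight $Z^+_{\mathbf q}>1$; (ii) handling the extra conditioning ``$\emptyset$ has exactly one child,'' which only rescales probabilities by a bounded factor and does not affect exponential integrability. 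Assembling these gives the claimed $\lambda>0$.
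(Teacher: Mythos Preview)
Your approach has several genuine gaps. First, the bound $X\le C\,N$ with an absolute constant $C$ is false: a type-$1$ vertex with label $1$ contributes one corner per child, and the number of children is unbounded (indeed only geometric), so corners are not dominated by a constant times vertices. Second, and more seriously, the inequality ``$N\le |\mathbf{A}_v^{\le 1}|$'' is simply wrong: a vertex with label $\le 1$ may well have an ancestor with label $>1$ (labels can go up and come back down), so it is counted in $N$ but not in the pruned tree. Third, even granting the pruning, your subcriticality argument is incomplete: the killing depends on the \emph{current label}, not just the type, so the process is spatially inhomogeneous and you cannot simply say ``each child survives with probability $<1$, hence the mean matrix shrinks''. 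A random walk conditioned to stay below a level can have infinite expected lifetime, and the analogous statement for branching random walks needs real work. Finally, you invoke regular criticality of $\mathbf{q}$, which is \emph{not} assumed in the proposition and is not available in the case $I=V$ of Theorem~\ref{chap4:thcvcartes}.

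The paper's proof avoids all of this by a direct memoryless argument that uses only the special structure of $\mu^{(1)}$: type-$1$ vertices have a \emph{geometric} number of type-$3$ children. Running the contour counterclockwise from the final corner of $\emptyset$, each time you meet a type-$1$ corner with label $1$, the geometric law gives a probability $1-1/Z^+_{\mathbf{q}}$ that this vertex has a further child to explore, and conditionally on that, a fixed $\alpha>0$ chance that this child's subtree immediately produces a type-$1$ vertex with label $0$, terminating the count. Hence $\pr(X_1=n\mid X_1\ge n)\ge \alpha(1-1/Z^+_{\mathbf{q}})$, which gives exponential tails directly; the type-$2$ contribution is handled analogously. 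No pruning, no subcriticality, and no exponential-moment hypothesis on $\zeta$ are needed.
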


\begin{proof} Recall that $X$ is the number of corners labelled $1$ or $\frac{1}{2}$ met before encountering a vertex labelled $0$ while circling counterclockwise around the tree $\mathbf{A}$. We will separately treat corners of types $1$ and $2$.

Let $X_1$ be the number of corners of type $1$ encountered. We claim that, for all $n$,
\begin{equation}\label{SErec}
\pr(X_1=n \; | \; X_1\geq n)\geq \alpha(1-\frac{1}{Z^+}),
\end{equation}
where $\alpha>0$ is the probability that, given a vertex of type $3$ labelled $1$, its rightmost offspring is of type $1$ and has label $0$. The fact that $\alpha$ is strictly positive comes from the fact that there exists $i\geq 3$ such that $q_i>0$. In the case where such an $i$ is different from $3$, the vertex of type $3$ can have offspring with at least one child of type $1$, the uniform ordering of the offspring means that this child can be the rightmost one, and the distribution of the label displacements shows that it can have label $0$. For the case where $q_3>0$ and $q_i=0$ for $i\geq 4$, the type $3$ vertex can have a unique child of type $2$ with label $\frac{1}{2}$, which can have a unique child of type $4$ which can have a unique child of type $1$ with label $0$.

Inequality \eqref{SErec} is obtained by recalling from Proposition \ref{bij} that the offspring of vertices of type $1$ is only made of vertices of type $3$, and that their number follows a geometric distribution with parameter $1-\frac{1}{Z^+}$. Thus, whenever we visit a corner of a type $1$ vertex with label $1$, there is a $1-\frac{1}{Z^+}$ chance that this vertex has another child. This immediately gives us \eqref{SErec}, and a simple induction shows that $X_1$ is indeed an $EI$ variable.

\medskip

Let now $X_2$ be the number of \emph{vertices} of type $2$ with label $\frac{1}{2}$ encountered before the first vertex labelled. We insist that we count each vertex exactly once, when we meet them for the first time on the counter-clockwise exploration path. Then the same argument as for vertices of label $1$ shows that $\pr(X_2=n \; | \; X_2\geq n)\geq \alpha'$ for some strictly positive $\alpha'$, and $X_2$ is an $EI$ variable.

\medskip

Since $X\leq X_1 + 2X_2$, we now have our conclusion.
\end{proof}

The following lemma provides some additional on the structure $\mathbf{T}$.

\begin{lemma}\label{liengeo} Let $n\in\Z_+,$ and let $V$ be the $n$-th vertex of the spine of $\mathbf{T}$ to have type $1$. Let also $N_r$ and $N_l$ be the numbers of subtrees rooted at $v$ on the right and left sides of the spine. These variables are i.i.d. and their common distribution is geometric with parameter $1-\frac{1}{Z^+}$.
\end{lemma}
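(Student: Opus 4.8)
The statement concerns the infinite mobile $(\mathbf{T},\mathbf{E},\mathbf{L})$ with distribution $\widehat{\pr}^{(1),(0)}_{\zeta,\nu}$, whose spine has offspring distribution $\widehat{\zeta}$, the size-biased version of $\zeta$ defined by \eqref{defzetahat}. The plan is to unwind what the size-biasing does to a spine vertex of type $1$ and to read off the law of $(N_r,N_l)$ directly. First I would recall from Proposition \ref{bij} that a (non-spine) vertex of type $1$ has offspring distribution $\mu^{(1)}$ given by $\mu^{(1)}(0,0,k,0)=\frac{1}{Z^+}(1-\frac{1}{Z^+})^k$, i.e. it has a number of children (all of type $3$) which is geometric with parameter $1-\frac{1}{Z^+}$ (geometric on $\Z_+$), and all other children-vectors have probability zero. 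In particular $b_{x_l}=b_3$ for every child of a type-$1$ vertex, so by \eqref{defzetahat} the size-biased law $\widehat{\zeta}^{(1)}$ of an ordered offspring-word $\mathbf{x}$ of length $k$ is proportional to $k b_3 \,\zeta^{(1)}(\mathbf{x})$; projecting to the unordered level, $\widehat{\mu}^{(1)}(0,0,k,0)$ is proportional to $k\,\mu^{(1)}(0,0,k,0)$, which is the size-biasing of a geometric$(1-\frac{1}{Z^+})$ law.

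The key computation is then the classical fact that the size-biased version of a geometric random variable on $\Z_+$ with success parameter $1-\frac1{Z^+}$ is distributed as $G_1+G_2+1$ where $G_1,G_2$ are i.i.d.\ geometric with the same parameter. Concretely, if $G$ is geometric$(1-p)$ on $\Z_+$ with $p=1-\frac1{Z^+}$, then $\pr(G=k)\propto p^k$ and the size-biased law has $\widehat{G}=k$ with probability $\propto k p^k$, and one checks $\sum_k k p^k x^k$ has the right form so that $\widehat G\overset{d}{=}G_1+G_2+1$ (equivalently, the probability generating function of $\widehat G$ is $\big(\frac{1-p}{1-px}\big)^2 x$). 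Thus a spine vertex $V$ of type $1$ has a total of $\widehat{G}=G_1+G_2+1$ children of type $3$. Moreover, once the unordered offspring $(0,0,k,0)$ of the spine vertex is fixed, $\widehat{\zeta}^{(1)}$ orders these $k$ identical type-$3$ children uniformly (all orderings have equal $\zeta^{(1)}$-weight since all entries are $3$), and the next spine vertex is then picked among them with probability proportional to $b_3$, i.e.\ uniformly at random. Choosing a uniform position among $k=G_1+G_2+1$ elements splits the remaining $k-1 = G_1 + G_2$ children into those to the left and those to the right of the chosen spine child; by exchangeability (the chosen position is uniform, independent of everything else), and using that the size-biased geometric's decomposition $G_1+G_2$ is precisely the ``left part plus right part'' under a uniformly inserted marked point, the left count $N_l$ and right count $N_r$ are exactly $G_1$ and $G_2$ — i.i.d.\ geometric$(1-\frac1{Z^+})$.

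So the steps are: (1) recall $\mu^{(1)}$ and note the $k$-children case forces all children to be type $3$, hence the $b$-weights entering \eqref{defzetahat} are constant; (2) deduce $\widehat{\mu}^{(1)}$ is the size-bias of a geometric and identify it as $G_1+G_2+1$ with $G_1,G_2$ i.i.d.\ geometric$(1-\frac1{Z^+})$; (3) observe $\widehat{\zeta}^{(1)}$ uniformly orders the identical type-$3$ children and that the spine continues into a uniformly chosen one of them; (4) conclude that the subtrees on the left (resp.\ right) of the spine at $V$ are exactly the $G_1$ (resp.\ $G_2$) children not on the spine, lying to one side, hence $N_l \overset{d}{=} G_1$, $N_r \overset{d}{=} G_2$, independent. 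The mild subtlety — the only place any care is needed — is step (4): making precise that ``size-biased geometric minus one, split by a uniform marker'' reproduces two independent geometrics. This is the standard combinatorial identity behind Kesten's tree for geometric offspring (a random walk bridge / ballot-type argument, or simply matching probability generating functions), and I would just invoke it, as the paper elsewhere treats such facts as routine. No genuine obstacle arises; the content is entirely the geometric size-biasing identity.
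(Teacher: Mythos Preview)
Your proposal is correct and follows essentially the same approach as the paper: identify the total offspring of a type-$1$ spine vertex as a size-biased geometric, note that the spine child is chosen uniformly among the children, and deduce the joint law of $(N_l,N_r)$. The only difference is packaging: the paper simply writes $\pr(N_l=k,N_r=k')=\pr(N=1+k+k')/(1+k+k')$ and simplifies algebraically to $\big(\tfrac{1}{Z^+}\big)^2(1-\tfrac{1}{Z^+})^{k+k'}$, whereas you phrase the same computation as the standard identity ``size-biased geometric split by a uniform marker yields two i.i.d.\ geometrics.''
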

\begin{proof} By combining Proposition \ref{bij} and Proposition \ref{definftree}, we obtain that the total offspring $N$ of $V$ follows a size-biased geometric distribution: we have
	\[\pr(N=k)=\frac{k}{(Z^+)^2}(1-\frac{1}{Z^+})^{k-1}
\]
for $k\geq 1$. Recall also that the child of $V$ which is on the spine is chosen uniformly amongst the offspring of $V$. We thus have
	\[\pr(N_l=k,N_r=k')=\frac{\pr(N=1+k+k')}{1+k+k'}=(\frac{1}{Z^+})(1-\frac{1}{Z^+})^{k+k'},
\]
ending the proof.
\end{proof}

\noindent\textbf{Proof of Theorem \ref{recurrence} for positive maps:} First off, by Lemma \ref{liengeo}, we know that $\emptyset$ has an EI amount of children, since geometric variables are EI, and therefore has an EI amount of successors. Next, look at all the subtrees of $\mathbf{T}$ which are rooted at $\emptyset$, excluding the subtree containing the spine. These are in EI amount, all independent, and, by Proposition \ref{unsousarbre}, the root $\emptyset$ is connected to an EI amount of vertices in each of them. Lemma \ref{sommeEI} allows to combine all of this: outside of the subtree containing the spine, $\emptyset$ is connected to an EI amount of vertices. Thus we now only need to prove a variation of Proposition \ref{unsousarbre} for this very subtree. This is done in the same way since, when doing the counterclockwise exploration process, the number of children of a vertex of type $1$ on the spine is still geometric by Lemma \ref{liengeo}, while vertices of type $2$ only correspond to one corner. \qed


\subsection{The case of null maps}
The situation for null maps is slightly different, because the vertex $E^+$ is no longer the root of the mobile. Consider a mobile $(\mathbf{T,E,L})$ obtained by merging at their roots the two components of a forest with distribution $\widehat{\pr}^{(2,2),(\frac{1}{2},\frac{1}{2})}_{\z,\nu}$, and let $(M,E,R)$ be the map obtained after applying the BDFG bijection. Recall that $E^+$ is the first vertex of type $1$ and label $0$ encountered when running the clockwise countour process of $\mathbf{T}$. Note that it is either on the spine or on its left side.

\medskip
An adaptation of the reasoning used in the previous section will work and give us that the number of vertices $E^+$ is connected to is indeed EI. First, for the number such vertices which are descendants of $E^+$ in $\mathbf{T}$, we find ourselves exactly back to the positive case: if $E^+$ is not on the spine then we apply Proposition \ref{unsousarbre} to an EI number of subtrees rooted at $E^+$, and if $E^+$ is on the spine, we separate the subtrees on the left side of the spine, on the right side of the spine and the subtree containing the spine. Secondly, we look for points of which $E^+$ is the successor, but which are not descendants of $E^+$. These can be obtained by running both the clockwise and counter-clockwise contour processes, starting at the root, and stopping them the first time we reach a $0$ label. The same arguments as in the proof of Proposition \ref{unsousarbre} show that we encounter an EI number of vertices of labels $1$ and $\frac{1}{2}$ on the way, thus ending the complete proof of Theorem \ref{recurrence}.\qed

\appendix

\section{Around exponentially integrable variables}\label{sec:EI}
We recall here a few basic facts about non-negative random variables with exponential moments.
\medskip
Let $X$ be a nonnegative random variable. We say that $X$ is \emph{exponentially integrable with parameter $\lambda>0$} (which we shorten as $EI(\lambda)$, and simply $EI$ if we are not interested in the value of $\lambda$) if we have
	\[\mathbb{E}[\e^{\lambda X}] < \infty.
\]
The use of Markov's inequality shows that this implies that the tail of $X$ is bounded by an exponential with parameter $\lambda$:
	\[\forall n\in\N, \pr(X\geq n) \leq \mathbb{E}[\e^{\lambda X}]\e^{-\lambda n}.
\]
The converse is no quite true, but almost is: if the tail of $X$ is bounded by an exponential with parameter $\lambda$, then $X$ is $EI(\lambda')$ for $\lambda' < \lambda$.

\begin{lemma}\label{sommefinieEI} If $X$ and $Y$ are two $EI(\lambda)$ variables then $X+Y$ is $EI(\lambda')$ for $\lambda'<\frac{\lambda}{2}$.
\end{lemma}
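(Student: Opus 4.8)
The plan is to prove Lemma~\ref{sommefinieEI} by a direct application of the Cauchy--Schwarz inequality, which is the standard way to handle sums of random variables with exponential moments when one does not assume independence. First I would observe that the statement makes no independence hypothesis on $X$ and $Y$, so I cannot simply multiply generating functions; instead I must control $\mathbb{E}[\e^{\lambda'(X+Y)}]$ by splitting the exponent symmetrically.

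\medskip

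The key steps, in order, are as follows. Fix $\lambda' < \lambda/2$. Write $\e^{\lambda'(X+Y)} = \e^{\lambda' X}\e^{\lambda' Y}$ and apply Cauchy--Schwarz to get
\[
\mathbb{E}\big[\e^{\lambda'(X+Y)}\big] \leq \mathbb{E}\big[\e^{2\lambda' X}\big]^{1/2}\,\mathbb{E}\big[\e^{2\lambda' Y}\big]^{1/2}.
\]
Since $2\lambda' < \lambda$ and $X$, $Y$ are nonnegative, we have $\e^{2\lambda' X} \leq \e^{\lambda X}$ and $\e^{2\lambda' Y}\leq \e^{\lambda Y}$, so both factors on the right are bounded by $\mathbb{E}[\e^{\lambda X}]^{1/2}$ and $\mathbb{E}[\e^{\lambda Y}]^{1/2}$ respectively, which are finite by hypothesis. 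Hence $\mathbb{E}[\e^{\lambda'(X+Y)}] < \infty$, which is exactly the assertion that $X+Y$ is $EI(\lambda')$.

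\medskip

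There is essentially no obstacle here: the only mild subtlety is the use of $\e^{2\lambda' t}\leq \e^{\lambda t}$ for $t\geq 0$, which requires nonnegativity of $X$ and $Y$ (part of the standing definition of $EI$) and the strict inequality $2\lambda' < \lambda$. One could alternatively phrase the argument via the elementary bound $X+Y \leq 2\max(X,Y)$ and $\e^{\lambda' \cdot 2\max(X,Y)} \leq \e^{2\lambda' X} + \e^{2\lambda' Y}$, which avoids Cauchy--Schwarz entirely and gives the same conclusion with the same constraint on $\lambda'$; I would likely present this second version since it is marginally shorter and more transparent. Either way the proof is a couple of lines.
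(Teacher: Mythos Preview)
Your proof is correct. The paper's own argument is essentially your alternative version, phrased at the level of tails rather than moment generating functions: it simply writes $\pr(X+Y>n)\leq \pr(X>n/2)+\pr(Y>n/2)$, reads off an exponential tail bound with rate $\lambda/2$, and then uses the remark (stated just before the lemma) that an exponential tail with rate $\lambda/2$ yields $EI(\lambda')$ for every $\lambda'<\lambda/2$.

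Your Cauchy--Schwarz argument is a slightly different route: it works directly with the moment generating function and bypasses the tail-to-moment conversion. Both approaches give the identical threshold $\lambda'<\lambda/2$ and neither requires independence. The paper's version is marginally shorter because the tail-to-$EI$ implication is already recorded separately; your version is self-contained. There is nothing to correct.
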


\begin{proof}
Just bound $\pr(X+Y>n)$ by $\pr(X>\frac{n}{2})+\pr(Y>\frac{n}{2})$.
\end{proof}

With an extra independence assumption, one can also do sums with a random amount of terms:

\begin{lemma}\label{sommeEI} Let $(X_i)_{i\in\N}$ be i.i.d nonnegative variables which are $EI(\lambda)$ for some $\lambda>0$. Let $N$ be an independent integer-valued variable which is $EI(\mu)$ for some $\mu>0$. If $\mathbb{E}[\e^{\lambda X_1}]\leq \e^{\mu}$ (which is always possible by taking $\lambda$ small enough), the the variable
	\[Y=\sum_{i=1}^N X_i
\]
is also $EI(\lambda)$.
\end{lemma}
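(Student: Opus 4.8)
The plan is to compute the moment generating function of $Y$ directly by conditioning on $N$. First I would write, using independence of $N$ from the family $(X_i)_{i\in\N}$,
\[
\mathbb{E}\big[\e^{\lambda Y}\big]=\mathbb{E}\Big[\e^{\lambda\sum_{i=1}^N X_i}\Big]=\sum_{k=0}^{\infty}\pr(N=k)\,\mathbb{E}\Big[\e^{\lambda\sum_{i=1}^k X_i}\Big]=\sum_{k=0}^{\infty}\pr(N=k)\,\big(\mathbb{E}[\e^{\lambda X_1}]\big)^{k},
\]
where the last equality uses that the $X_i$ are i.i.d. Then I would invoke the hypothesis $\mathbb{E}[\e^{\lambda X_1}]\leq\e^{\mu}$ to bound $\big(\mathbb{E}[\e^{\lambda X_1}]\big)^k\leq\e^{\mu k}$, giving
\[
\mathbb{E}\big[\e^{\lambda Y}\big]\leq\sum_{k=0}^{\infty}\pr(N=k)\,\e^{\mu k}=\mathbb{E}\big[\e^{\mu N}\big]<\infty,
\]
the finiteness being exactly the statement that $N$ is $EI(\mu)$. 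This shows $Y$ is $EI(\lambda)$.

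The only points that need a word of justification are the interchange of expectation and sum (which is fine since all terms are nonnegative, by Tonelli) and the remark in parentheses in the statement, namely that one can always arrange $\mathbb{E}[\e^{\lambda X_1}]\leq\e^{\mu}$ by shrinking $\lambda$: indeed $\lambda\mapsto\mathbb{E}[\e^{\lambda X_1}]$ is continuous and decreasing to $\mathbb{E}[1]=1<\e^{\mu}$ as $\lambda\downarrow 0$ (using dominated convergence, valid on the interval where the quantity is finite), so for $\lambda$ small enough the bound holds while $X_1$ remains $EI(\lambda)$ and, a fortiori, $Y$ remains $EI$ of that smaller parameter.

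I do not expect any real obstacle here; the computation is a one-line generating-function manipulation and the statement has been set up (via the hypothesis $\mathbb{E}[\e^{\lambda X_1}]\leq\e^{\mu}$) precisely so that no extra loss in the parameter is incurred, in contrast with Lemma \ref{sommefinieEI}. The only thing to be careful about is to phrase the final sentence so that the reader sees that the normalization hypothesis is harmless.
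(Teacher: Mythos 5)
Your proof is correct and is essentially the paper's own argument: condition on $N$, use independence and the i.i.d.\ property to get $\mathbb{E}[\e^{\lambda Y}]=\mathbb{E}\big[\mathbb{E}[\e^{\lambda X_1}]^N\big]$, and bound this by $\mathbb{E}[\e^{\mu N}]<\infty$ via the hypothesis $\mathbb{E}[\e^{\lambda X_1}]\leq\e^{\mu}$. The extra justifications (Tonelli for the interchange, and the remark on shrinking $\lambda$) are harmless elaborations of what the paper leaves implicit.
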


\begin{proof} Conditioning on $N$ and integrating with respect to all of the $X_i$, one immediately obtains
	\[\mathbb {E}[\e^{\lambda Y}]=\mathbb {E}\Big[\mathbb {E}[e^{\lambda X_1}]^N\Big],
\]
and this is enough.
\end{proof}
This could of course be generalized to the case where the $(X_i)$ do not have the same distribution, but uniformally bounded exponential moments - we will not need such a generalization.

\section*{Acknowledgments} This paper has been in the works for a very long time, and has evolved a lot in scope since its inception. The author would like to thank Gr\'egory Miermont for many discussions and countless proofreadings, Gregory Lawler for an exchange concerning ratio limit theorems for random walks, Nicolas Curien and Svante Janson for reading the preliminary version which appeared in his thesis, and finally the anonymous referee for a very careful proofreading.

\bibliographystyle{siam}
\bibliography{bib}
\end{document}